\documentclass[12pt,a4paper]{amsart}
\makeatletter
\renewcommand\normalsize{%
   \@setfontsize\normalsize{11.7}{14pt plus .3pt minus .3pt}%
   \abovedisplayskip 10\p@ \@plus4\p@ \@minus4\p@
   \abovedisplayshortskip 6\p@ \@plus2\p@
   \belowdisplayshortskip 6\p@ \@plus2\p@
   \belowdisplayskip \abovedisplayskip}
\renewcommand\small{%
   \@setfontsize\small{9.5}{12\p@ plus .2\p@ minus .2\p@}%
   \abovedisplayskip 8.5\p@ \@plus4\p@ \@minus1\p@
   \belowdisplayskip \abovedisplayskip
   \abovedisplayshortskip \abovedisplayskip
   \belowdisplayshortskip \abovedisplayskip}
\renewcommand\footnotesize{%
   \@setfontsize\footnotesize{8.5}{9.25\p@ plus .1pt minus .1pt}%%
   \abovedisplayskip 6\p@ \@plus4\p@ \@minus1\p@
   \belowdisplayskip \abovedisplayskip
   \abovedisplayshortskip \abovedisplayskip
   \belowdisplayshortskip \abovedisplayskip}
% ********************* DIMENSIONS:
% TEXT DIMENSIONS
\setlength\parindent{30\p@}
\setlength\textwidth{412\p@}
\setlength\textheight{570\p@}
% PAPER AND TRIM SIZE
\paperwidth=210mm
\paperheight=260mm
\ifdefined\pdfpagewidth
\setlength{\pdfpagewidth}{\paperwidth}
\setlength{\pdfpageheight}{\paperheight}
\else
\setlength{\pagewidth}{\paperwidth}
\setlength{\pageheight}{\paperheight}
\fi
\calclayout
\makeatother

\usepackage{graphicx}

\usepackage[T1]{fontenc}
\usepackage[utf8]{inputenc}
\usepackage[frenchb]{babel}

\usepackage{subcaption}

\usepackage[colorlinks=true]{hyperref}
\hypersetup{urlcolor=green,linkcolor=blue,citecolor=red,colorlinks=true}
\usepackage{color}

\newtheorem*{theoreme*}{Théorème}
\newtheorem{theoreme}{Théorème}[section]
\newtheorem{corollaire}[theoreme]{Corollaire}
\newtheorem{lemme}[theoreme]{Lemme}
\newtheorem{proposition}[theoreme]{Proposition}
\theoremstyle{definition}
\newtheorem{definition}[theoreme]{Définition}

\theoremstyle{remark}
\newtheorem{remarque}[theoreme]{Remarque}

\numberwithin{equation}{section}

\newcommand{\norm}[1]{\left\Vert#1\right\Vert}

\def\C{\mathbf{C}}
\def\R{\mathbf{R}}
\def\Z{\mathbf{Z}}

\def\S{\mathbf{S}}
\def\D{\mathbf{D}}

\def\PUC{\mathbf{P}^1_{\C}}
\def\Ptwo{\mathbf{P}^2_{\C}}
\def\PnC{\mathbf{P}^n_{\C}}

\def\J2{J_2}
\def\Jd{J_d}

\def\FJ2{\mathcal{J}_2}
\def\FJd{\mathcal{J}_d}

\def\FF{\mathcal{F}}
\def\FV{\mathcal{F}_V}
\def\F2{\mathcal{F}_2}

\def\GG{\mathcal{G}}

\def\sing {S} 
\def\reg{\text{rég}}

\def \Ptwostar {{\Ptwo} ^*} 

\def \Ptwostarprime { {\Ptwo} ^{* '} } 

\def\W{W}

\def\Pr{\mathcal{P}_{B}}
\def\Qr{\mathcal{P}_{S}}

% \FSFo = \mathcal F ^{--} _W, \FSFa = \mathcal F ^- _W , \FIFo = \mathcal F^{++} _W, \FIFa = \mathcal F^+ _W 

\def\FSFo{\mathcal F^{--}}
\def\FSFa{\mathcal F^-} 
\def\FIFo{\mathcal F^{++}}
\def\FIFa{\mathcal F^+}

\def\AF{\text{Aff}^+(\R)}

\def\Att{\text{Att}}
\def\Rep{\text{Rép}}

\def\TF{T \FF} 
\def\TRF{T^\R \FF} %fibré tangent réel

\def\NF{N{\FF}}
\def\NRF{N^\R  \FF}

\def\NG{N{\GG}}

\def\WF {N_{\FF}W}

\def\gWF {g_{W,\FF}}
\def\hWF {h_{W,\FF}}
\def\hyp {h^H} 

\def\Fatou{F}
\def\Julia{J}

\def\B2{\mathcal{B}_2}
\def\K3{\mathcal{K}_3}

% Feuilletage de Jouanolou

% Surfaces et sections transverses

%\date{}
\title{Stabilité structurelle du feuilletage de Jouanolou de degré 2}

\author{Aurélien Alvarez et Bertrand Deroin}

\begin{document}

\begin{abstract}
Nous démontrons que le feuilletage de Jouanolou de degré~2 sur le plan projectif complexe est structurellement stable.
De plus, son ensemble de Fatou est une fibration holomorphe sur la quartique de Klein ayant une structure de fibré lisse localement trivial en disques. 
En particulier, aucune feuille de \(\FJ2\) n'est dense dans \(\Ptwo\).
\end{abstract}

\maketitle

%%%%%%%%%%%%%%%%%%%%%%%%%%%%%
%%%%%%%%%%%%%%%%%%%%%%%%%%%%%
\section{Introduction et énoncé du théorème principal} %%
%%%%%%%%%%%%%%%%%%%%%%%%%%%%%
%%%%%%%%%%%%%%%%%%%%%%%%%%%%%

Soit \(\mathbf{F}_d\) l'espace des modules des feuilletages algébriques complexes de degré~\( d\) sur le plan projectif complexe \(\Ptwo\). Mieux comprendre la décomposition de la variété quasi-projective \(\mathbf{F}_d\) suivant les propriétés dynamiques et topologiques des feuilletages algébriques reste un problème largement ouvert dès que \(d\geq 2\). Le présent travail se propose d'y apporter une contribution nouvelle et, en un certain sens, inattendue.

Dans un travail célèbre, présenté lors de l'ICM 1978 et qui repose en partie sur les articles de Hudaj-Verenov \cite{HV} et Mjuller \cite{Mjuller}, Il'yashenko \cite{Ilyashenko} démontre que presque tout (vis-à-vis de la mesure de Lebesgue) feuilletage de \( \mathbf{F}_d\) qui contient une droite projective invariante est structurellement rigide, ergodique et minimal. Ce travail a inspiré de nombreux auteurs, parmi lesquels Scherbakov \cite{Scherbakov}, Cerveau \cite{Cerveau}, Ghys \cite{Ghys}, Nakai \cite{Nakai} et, plus récemment, Loray et Rebelo \cite{Loray-Rebelo}. Ces derniers montrent qu'il existe un ouvert non vide de \(\mathbf{F}_d \) formé de feuilletages structurellement rigides, minimaux, et ergodiques, en s'affranchissant de l'hypothèse sur l'existence d'une droite projective invariante. 

Le résultat que nous présentons ici s'oppose radicalement à tous ces travaux : nous exhibons une composante de stabilité non triviale dans \(\mathbf{F}_2\), c'est-à-dire un ouvert formé de feuilletages tous topologiquement conjugués les uns aux autres; de plus, nous montrons que les feuilletages appartenant à cette composante de stabilité n'ont aucune feuille dense et ne sont pas ergodiques. Plus précisément, on désigne par \(\Jd\) le champ de vecteurs de Jouanolou de degré~\(d\) défini dans les coordonnées cartésiennes \((x, y, z)\) de \(\C^3\) par 
\begin{equation} \label{eq: def Jouanolou} \Jd(x, y, z) = y^d \frac{\partial }{\partial x} +z^d \frac{\partial }{\partial y} +x^d \frac{\partial }{\partial z}. \end{equation}
Ce champ est \textit{homogène} de degré \(d\) et définit donc un feuilletage \( \FJd\) du plan projectif complexe. Jouanolou a montré dans \cite{Jouanolou} que \(\FJd\) n'a pas de feuille algébrique invariante lorsque \(d\geq 2\) et qu'il en est ainsi pour un feuilletage générique de~\(\mathbf{F}_d\).  

\begin{theoreme*}\label{t: stabilité structurelle de Jouanolou en degré 2}
Le feuilletage \(\FJ2\) du plan projectif complexe \(\Ptwo\) est structurellement stable. %\footnote{De plus, il admet une décomposition Fatou/Julia non triviale, et sa restriction à l'ensemble de Fatou admet une intégrale première holomorphe à valeurs dans la quartique de Klein, qui induit une structure de fibré en disques lisse.}
De plus, son ensemble de Fatou est une fibration\footnote{Nous appelons fibration un morphisme surjectif à fibres connexes entre deux variétés complexes.} sur la quartique de Klein ayant une structure de fibré lisse localement trivial en disques. 
%est un domaine errant dont le quotient est biholomorphe à la quartique de Klein, et dont les fibres sont des disques. %De plus, son ensemble de Fatou est un domaine errant isomorphe à un fibré en disques au-dessus de la quartique de Klein. 
En particulier, aucune feuille de \(\FJ2\) n'est dense dans \(\Ptwo\).
\end{theoreme*} 

Dire que le feuilletage \(\FJ2\) est structurellement stable signifie qu'il existe un voisinage \(\mathcal{V}\) de \(\FJ2\) dans l'espace des modules $\mathbf{F}_2$ tel que tout feuilletage dans \(\mathcal{V}\) est topologiquement conjugué à~\(\FJ2\). Le lieu de stabilité dans \(\mathbf{F}_2\) est par définition l'ensemble des feuilletages algébriques structurellement stables. Nous conjecturons que l'application quotient du domaine de Fatou d'un feuilletage de la composante de stabilité de \(\FJ2\) est un revêtement de l'espace des modules des courbes algébriques lisses de genre~3. 

\begin{figure}[ht]
	\centering
	\includegraphics[width=0.35\textwidth]{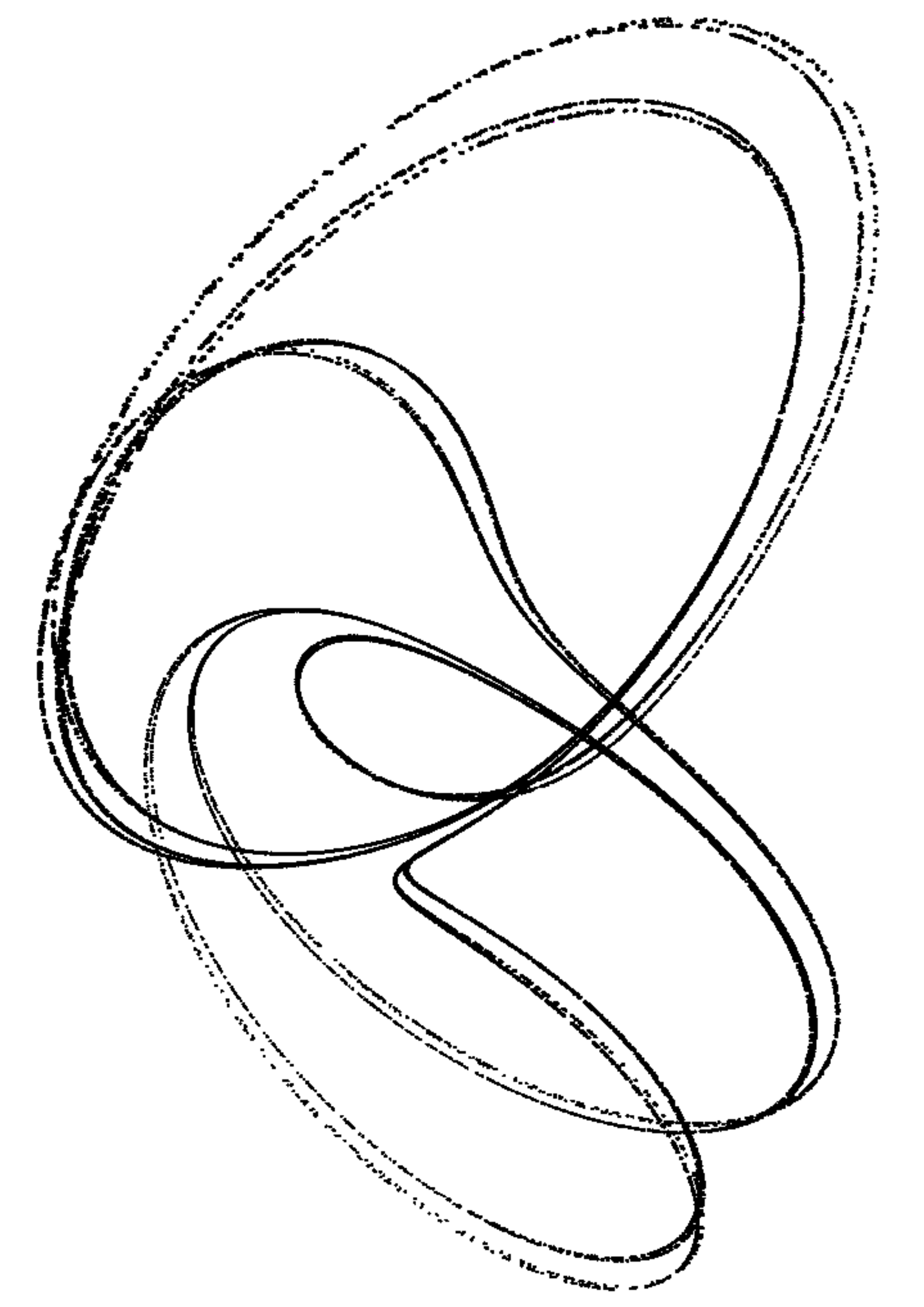}
	\caption{Intersection de l'ensemble de Julia du feuilletage de Jouanolou de degré~2 avec une sphère entourant une singularité}
	\label{img:d2}
\end{figure}

Concernant l'ensemble de Julia, défini comme le complémentaire de l'ensemble de Fatou, nous conjecturons qu'il est de mesure nulle\footnote{En utilisant les propriétés d'hyperbolicité que nous établissons dans cet article, il suffirait de montrer que l'ensemble de Julia est d'intérieur vide pour en déduire qu'il est de mesure nulle.} et transversalement un ensemble de Cantor (figure~\ref{img:d2}). Cette conjecture, étayée par des expérimentations numériques (\(\mathsection\)~\ref{sec:images-exp-num}), impliquerait que le feuilletage de Jouanolou est intégrable en dehors d'un ensemble fermé de mesure nulle.

Nous montrons également que les feuilles du feuilletage \(\mathcal J_2\) sont simplement connexes, sauf un nombre dénombrable d'entre elles qui sont des anneaux (\(\mathsection~\ref{ss: conjecture Anosov}\)).
En vertu du théorème principal ci-dessus, nous sommes donc en mesure de valider la conjecture d'Anosov (qui décrit ainsi la topologie des feuilles pour un feuilletage générique du plan projectif complexe) pour des feuilletages appartenant à un ouvert non vide de \(\mathbf{F} _2\).

%L'un des ingrédients clés de la démonstration du théorème \ref{t: stabilité structurelle de Jouanolou en degré 2} est l'étude de la norme hermitienne des  solutions du champ   \(\J2\), où \(\norm{\cdot}\) désigne le carré de la norme hermitienne standard sur \(\C^3\). Nous montrons en particulier que les solutions du champ \(\J2\) dont la projection dans \(\Ptwo\) est contenue dans le domaine errant de \(\FJ2\) sont caractérisées par la propriété d'être proprement plongées dans \(\C^3\). Nous allons maintenant préciser tout cela et indiquer les grandes étapes de la démonstration du théorème~\ref{t: stabilité structurelle de Jouanolou en degré 2}.

\vskip 0.5cm

Nous remercions chaleureusement Serge Cantat, Dominique Cerveau, Yulij Ilyashenko, Étienne Ghys, Alexey Glutsyuk, Xavier Gomez-Mont, Adolfo Guillot, Samuel Lelièvre, Frank Loray, Jorge Pereira, Bruno Sévennec pour les nombreuses discussions que l'on a pu avoir à propos de ce travail.

\tableofcontents

%%%%%%%%%%%%%%%%%%%%%%%%%%%%%%%%%%%%%
%%%%%%%%%%%%%%%%%%%%%%%%%%%%%%%%%%%%%
\section{Stratégie pour démontrer le théorème de stabilité structurelle} %%
%%%%%%%%%%%%%%%%%%%%%%%%%%%%%%%%%%%%%
%%%%%%%%%%%%%%%%%%%%%%%%%%%%%%%%%%%%%

Un feuilletage algébrique complexe \(\FF\) de degré \(d\) du plan projectif complexe est le projectivisé du feuilletage \( \FF_V\) de \( \C^3\) défini par un champ de vecteurs~\(V\) homogène de degré~\(d\) que l'on peut toujours supposer de divergence nulle (lemme~\ref{l: divergence nulle}). De plus, le lieu des points où~\( V\) et~\(R\) sont colinéaires est une union finie de droites vectorielles qui définissent l'ensemble singulier~\(\sing\) de \(\FF\). 

Dans la suite, on considère un feuilletage \(\FF\) défini par un champ homogène~\(V\) de degré~\(d\), de divergence nulle et qui ne s'annule pas sur \({\bf C}^3 \setminus \{0\}\).

Commen\c{c}ons par énoncer un critère qui assure l'existence d'un ensemble errant pour le feuilletage \(\FF\) (\(\mathsection\)~\ref{ss: domaine errant de F}). Sur chaque feuille de \(\FF_V\), considérons la restriction de la fonction \(- \log \norm{\cdot}\), où \(\norm{\cdot}\) est la norme hermitienne standard. Il s'agit d'une fonction strictement sur-harmonique, sauf le long des feuilles radiales de~\( \FF_V\) où elle est harmonique. Ses points critiques définissent l'ensemble algébrique réel~\(\tilde{B}\) donné par l'équation 
\[ \tilde{B}:= \{R\cdot V = 0\},\] 
où \(\cdot\) désigne le produit hermitien. Les points de \(\tilde{B}\) sont non dégénérés d'indice égaux à \(1\) ou \(2\) suivant que  \(| DV(V) \cdot R | >  || V ||^2\) ou \(| DV(V) \cdot R | < || V ||^2\) (lemme \ref{l: singularites longitudinales de \W}). Notre critère met en jeu la propriété suivante.

\vspace{0.2cm}

\noindent
\underline{Propriété \textbf{\(\Pr\)}} (déf.~\ref{propriete-PB}) : \textit{Le champ \(V\) ne s'annule pas sur \(\C ^3\setminus \{0\}\) et les points critiques de la restriction de la fonction \(-\log \norm{\cdot} \) le long des trajectoires de \(V\) dans \(\C ^3 \setminus \{0\} \) sont non dégénérés d'indice égaux à~2. }

\vspace{0.2cm}

Sous l'hypothèse \textbf{\(\Pr\)}, la variété algébrique réelle \(\tilde{B}\subset \C^3 \setminus \{0\} \) est lisse et transverse au feuilletage \(\FF_V\); par conséquent la surface algébrique réelle définie par l'équation 
\[ B:= \Pi( \tilde{B}  ) \]
est une section transverse\footnote{Une section transverse à un feuilletage est une sous-variété réelle transverse au feuilletage et de dimension égale à la codimension réelle du feuilletage.} au feuilletage \(\FF\). Une telle section transverse hérite d'une structure holomorphe induite par la structure holomorphe transverse du feuilletage \(\FF\) (\cite{Bogomolov}).

\begin{proposition}\label{p: fibration en disques}
Si \(V\) satisfait la propriété  \textbf{\(\Pr\)}, alors les trajectoires de \(V\) passant par les points de \( \tilde{B}\) sont des disques proprement plongés dans \(\C^3\) et l'union de ces trajectoires est un fibré lisse localement trivial en disques au-dessus de \(\tilde{B}\).
\end{proposition}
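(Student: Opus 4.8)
The plan is to study, on each leaf $L$ of $\FV$ meeting $\tilde{B}$, the restriction of $u:=\log\norm{\cdot}^2=-2\log\norm{\cdot}$ and to run Morse theory on it. First I would observe that a leaf through a point of $\tilde{B}$ is never radial: along a radial leaf $V$ is collinear with $R$, so $R\cdot V\neq 0$ wherever $V\neq 0$, and such a leaf avoids $\tilde{B}=\{R\cdot V=0\}$. Hence $-\log\norm{\cdot}$ is strictly superharmonic on $L$, equivalently $u|_L$ is strictly subharmonic, its critical set is exactly $\tilde{B}\cap L$, and by Property~$\Pr$ every such critical point is a non-degenerate minimum of $u|_L$ of index $0$ (strict subharmonicity already excludes maxima, and $\Pr$ excludes the saddles). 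The three things to prove are then: (i) $u|_L$ is a proper exhaustion of $L$ possessing a single critical point; (ii) consequently $L$ is a smoothly, properly embedded disk; and (iii) these disks assemble into a smooth locally trivial bundle over $\tilde{B}$.

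Granting (i) — so that every sublevel set $\{u\le c\}$ is compact in $L$ and $u\to+\infty$ at each end — step (ii) is a direct application of Morse theory to the exhaustion $u|_L$ whose only critical points are non-degenerate minima. Increasing the level $c$, each such minimum opens a new disk-shaped connected component of $\{u\le c\}$, and since Property~$\Pr$ forbids critical points of index $1$ or $2$ no handle is ever attached and no two components are ever joined; as $c\to+\infty$ the sublevel sets exhaust the connected surface $L$, which therefore has a single minimum and is an increasing union of disks, i.e. $L\cong\R^2$. Proper embedding is then automatic: any compact $K\subset\C^3$ lies in some $\{\norm{\cdot}\le M\}$, so $L\cap K\subset\{u\le 2\log M\}$ is compact in $L$ by the exhaustion property, and the inclusion $L\hookrightarrow\C^3$ is proper.

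The crux, and the step I expect to be the main obstacle, is (i): that $u|_L$ be bounded below and proper, i.e. a genuine exhaustion. Two pathologies must be excluded. The leaf could a priori accumulate on $0$ along an end, making $u\to-\infty$ there; to rule this out I would use the homogeneity of $V$, passing to the unit sphere $\mathbf{S}^5$ and to the projectivisation $\FF$ on $\Ptwo$ to compactify the radial variable, and exploit that $\tilde{B}$ is a cone, compact modulo $\R_{>0}$. The leaf could also fail to be properly embedded by accumulating on a limit set inside a compact annulus $\{\delta\le\norm{\cdot}\le M\}$; here I would argue by contradiction, taking flow-box coordinates for $\FV$ near an accumulation point and comparing the values of the strictly superharmonic function $-\log\norm{\cdot}$ across the infinitely many plaques of $L$ piling up there, the minimum principle together with the strict subharmonicity forbidding such accumulation. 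The analytic heart of both points is the completeness of the leafwise gradient flow of $u$ — no trajectory escaping in finite time and none trapped without reaching a critical point, precisely because $\Delta u>0$ strictly — and this is where the strict subharmonicity, rather than mere subharmonicity, is essential.

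Finally, for (iii) I would use that $\tilde{B}$ is smooth and transverse to $\FV$, as recorded just above the statement, and that each leaf $L_p$ through $p\in\tilde{B}$ meets $\tilde{B}$ in the single point $p$ (its unique critical point). The union $W:=\bigcup_{p\in\tilde{B}}L_p$ then carries the projection $\pi\colon W\to\tilde{B}$ sending a point to the critical point of its leaf, with fibres the disks $L_p$. Local triviality over a chart $U\subset\tilde{B}$ would be produced by flowing the complete leafwise gradient field of $u$ out of the transversal: starting from points of $U$ and flowing, parametrised by the ascending level of $u$ and an angular direction, one obtains a diffeomorphism $U\times\D\to\pi^{-1}(U)$ depending smoothly on the base point, since $V$, the induced metric and the critical points all vary smoothly along $\tilde{B}$. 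As the fibres are properly embedded disks varying continuously, this exhibits $W$ as a smooth locally trivial bundle in disks over $\tilde{B}$, as claimed.
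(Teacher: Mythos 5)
Your outline of steps (ii) and (iii), and the identification via Property \(\Pr\) of the critical points of \(u\) as non-degenerate minima, coincide with the paper's strategy (the paper runs the gradient flow of \(f=-\log\norm{\cdot}^2=-u\) along the leaves); granted (i), your Morse-theoretic conclusion is correct. But (i) is the entire content of the proposition, and the arguments you sketch for it fail. The one offered for your pathology (b) — flow boxes, the minimum principle and strict leafwise superharmonicity forbidding accumulation of plaques — proves too much: it nowhere uses that the leaf passes through \(\tilde{B}\), whereas leaves disjoint from \(\tilde{B}\) carry exactly the same strictly superharmonic function and may perfectly well accumulate. Already for the linear field \(\dot x = x,\ \dot y = \lambda y\) on \(\C^2\) with \(\lambda\) real irrational, \(\log(|x|^2+|y|^2)\) is strictly subharmonic on the leaf through \((1,c)\), yet that leaf accumulates on the circle \(\{x=1,\ |y|=|c|\}\); strict subharmonicity together with the maximum/minimum principle is compatible with plaque accumulation, so no contradiction can be extracted at that level of generality. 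Likewise your "analytic heart" — completeness/properness of the gradient flow "because \(\Delta u>0\) strictly" — is a non sequitur: pointwise strictness on a non-compact leaf bounds nothing, since both \(\Delta u\) and \(\norm{\nabla u}\) can decay to \(0\) along the leaf.

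The missing idea, which is how the paper actually proves the statement (théorème~\ref{t: existence domaine errant}), is a \emph{uniform} estimate obtained by projectivizing, and a different order of argument. One first builds a tubular neighbourhood \(U_B\subset\Ptwo\) of \(B=\Pi(\tilde{B})\) in which \(\FF\) is a disk bundle and which forward trajectories of the leafwise gradient field \(\tilde{W}\) of \(f\) enter and never leave (lemme~\ref{l: construction de UB}). Then \(df(\tilde{W})=\norm{\tilde{W}}_{\tilde g}^2\) is invariant under scalar multiplication and vanishes only on the cone \(\tilde{B}=\Pi^{-1}(B)\) — in particular it does \emph{not} vanish over \(\Pi^{-1}(\sing)\), although the projected field \(\W\) does — so compactness of \(\Ptwo\setminus\mathrm{Int}(U_B)\) bounds it below by some \(a>0\) on \(\Pi^{-1}(\Ptwo\setminus\mathrm{Int}(U_B))\), whence \(f\) grows at least linearly along any trajectory staying outside \(\Pi^{-1}(U_B)\) (lemme~\ref{l: croissance lineaire}). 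This makes \(f\) proper on the basin \(\Att_{\tilde{W}}(p)\) of each sink \(p\in\tilde{B}\): a basin point with \(f\geq -C\) reaches the compact lifted disk of \(U_B\) around \(p\) in time at most \((C+\sup_{D_p}f)/a\), so the sublevel sets of \(u=-f\) in the basin are compact. Properness then forces the basin, which is open in the leaf, to be closed in it as well, hence equal to the whole connected leaf. Only at this point does your exhaustion (i) become available — including the lower bound on \(u\), since \(f\le f(p)\) on the basin, so the leaf avoids the ball of radius \(\norm{p}\) — after which (ii) and (iii) proceed as you describe. The structural point you are missing is that properness cannot be attacked directly on the leaf, as your plan does; it is proved on the basin, where the flow and the compact disk in \(U_B\) give control, and the equality of basin and leaf is the conclusion, not an input.
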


La démonstration de la première partie de cette proposition se trouve au paragraphe \ref{ss: domaine errant de F}.
L'étude de la décomposition Fatou/Julia est faite à la partie~\ref{ss: decomposition Fatou Julia} et celle de la topologie des sections transverses des feuilletages de~\(\Ptwo\) à la partie \ref{s: sections transverses}.

L'un des outils principaux est l'étude du gradient de la fonction \(-\log \norm{\cdot} \) le long des trajectoires du champ \(V\) vis-à-vis d'une métrique hermitienne sur les feuilles de \( \FF_V\) qui est invariante par multiplication par les scalaires (\(\mathsection\)~\ref{ss: metrique sur TF}) : le champ de vecteurs \(\tilde{W}\) ainsi construit sur \(\C^3\) induit un champ de vecteurs \(W\) sur \(\Ptwo\) appelé le \textit{champ réel}. Il s'agit d'un champ de vecteurs analytique sur \(\Ptwo\) (\(\mathsection\)~\ref{ss: definition de W}), qui est par définition tangent à \(\FF\) et qui s'annule sur l'union de l'ensemble~\(B\) et de l'ensemble singulier de \(\FF\). Observons que sous l'hypothèse \textbf{\(\Pr\)}, les points de \(B\) sont des puits pour \(W\) en restriction à chaque feuille de \(\FF\). Ainsi, \(B\) est un attracteur pour le champ réel \(W\). 

Ce champ joue également un rôle central pour établir la stabilité structurelle si  la propriété \textbf{\(\Pr\)} ainsi que la propriété de répulsion \textbf{\(\Qr\)} suivante sont satisfaites :

\vspace{0.2cm}

\noindent
\underline{Propriété \textbf{\(\Qr\)}} (déf.~\ref{propriete-PS}): \textit{Le champ \(V\) ne s'annule pas sur \(\C ^3\setminus \{0\}\), les singularités du feuilletage~\(\FF\) sont hyperboliques et chacune de ces singularités est une source pour le champ réel \(\W\).} 

\begin{proposition}
Un feuilletage algébrique de degré \(d\) de \(\Ptwo\) qui satisfait les propriétés \textbf{\(\Pr\)} et \textbf{\(\Qr\)} est structurellement stable. En d'autres termes, tout feuilletage algébrique \(\FF'\) de degré \(d\) qui est suffisamment proche de \(\FF\) est topologiquement conjugué à \(\FF\).
\end{proposition}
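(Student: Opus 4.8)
The strategy is to turn the gradient-like structure carried by the real field $W$ — an attracting transverse section $B$ together with finitely many source singularities — into a topological conjugacy, exploiting that both defining properties are open and persist under perturbation. First I would verify that $\Pr$ and $\Qr$ are open conditions on $\mathbf{F}_d$. Non-vanishing of $V$ on the unit sphere is open by compactness; the non-degeneracy and the index-$2$ condition on the critical points of $-\log\norm{\cdot}$ along the trajectories of $V$ is a Morse (transversality) condition, hence open; and hyperbolicity of the singularities of $\FF$ together with the source condition for $W$ at each of them are open eigenvalue conditions. Therefore any $\FF'$ close enough to $\FF$ still satisfies $\Pr$ and $\Qr$, and the associated data $\tilde B'$, $B'$, $W'$, $S'$ vary continuously: $B'$ is a smooth transverse section isotopic to $B$ (hence of the same diffeomorphism type) carrying the same transverse holomorphic structure up to a small deformation, $S'$ is a finite set of hyperbolic sources close to $S$, and there is a diffeomorphism $\phi\colon B\to B'$ that is $C^0$-close to the identity. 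By Proposition~\ref{p: fibration en disques}, the Fatou set $F'$ of $\FF'$ is again a locally trivial disk bundle (over $\tilde B'$, hence over $B'$ after projectivization).

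Next I would build the conjugacy on the Fatou set. Each Fatou leaf is a properly embedded disk on which $-\log\norm{\cdot}$ is a Morse function with a single critical point, the index-$2$ maximum lying on $B$, toward which $W$ flows; thus the Fatou set $F$ is the total space of the disk bundle whose fibers are these leaves. Since $F\to B$ and $F'\to B'$ are locally trivial disk bundles over diffeomorphic bases with equal Euler numbers (by continuity), they are isomorphic, and a bundle isomorphism covering $\phi$ already yields a leaf-preserving homeomorphism $F\to F'$. Concretely I would normalize it by matching the attractor points via $\phi$ and sending $W$-trajectories to $W'$-trajectories with matching flow time, so that the map respects the transverse holomorphic structure up to the perturbation.

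The remaining — and \emph{hardest} — step is to glue this Fatou conjugacy, across the Julia set $J=\Ptwo\setminus F$, with local conjugacies at the singularities into a single homeomorphism of $\Ptwo$. Near each singularity, which is hyperbolic and a source for $W$, the local topological rigidity of hyperbolic singular points of holomorphic foliations provides a germ of conjugacy between $\FF$ and $\FF'$ sending separatrices to separatrices and compatible with the source structure of $W$ and $W'$. The difficulty is that the bundle isomorphism on $F$ is highly non-unique — fibers may be twisted freely — and only the isomorphism whose fiberwise boundary behavior is correct extends continuously to $J$. This forces one to match the way the Fatou disks accumulate on $J$, that is, to conjugate the transverse holonomy pseudogroups along $J$. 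Here I expect to rely decisively on the transverse hyperbolicity furnished by $\Pr$ and $\Qr$: the attractor $B$ together with the hyperbolic source singularities should make the holonomy pseudogroup along $J$ uniformly hyperbolic (expanding and contracting in complementary directions). I would then realize the conjugacy on $J$ as the fixed point of a holonomy-matching (graph-transform) operator, shown to be contracting by these hyperbolic estimates, and invoke an inclination ($\lambda$-)lemma to obtain continuity of the resulting global map across $\partial F$. Establishing the uniform hyperbolic estimates on the transversally Cantor Julia set, and checking that the three pieces agree along their common boundary, is where the substance of the proof lies.
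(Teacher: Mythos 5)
Your outline gets the soft part right --- \(\Pr\) and \(\Qr\) are indeed open conditions, the Fatou sets of \(\FF\) and \(\FF'\) are disk bundles over \(B\) and \(B'\), and the substance lies in extending a leafwise correspondence across the Julia set --- but precisely at that point you stop: the holonomy-matching graph-transform operator, the contraction estimates on the Julia set, and the \(\lambda\)-lemma continuity argument are named, not constructed. This is a genuine gap, and it is compounded by a misframing: \(\Pr\) and \(\Qr\) do not by themselves furnish uniform hyperbolicity of the \emph{holonomy pseudogroup along \(J\)} with \og expanding and contracting complementary directions \fg\ (the transversal is one complex dimensional, and the holonomy there is purely expanding). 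In the paper, the hyperbolic structure lives at the level of the real flow \(\W\) on the four-manifold \(\Ptwostar\), with \(\FF\) as weak stable foliation and a transverse three-dimensional weak unstable foliation, and it is the output of a long construction: an adapted metric near \(U_B\) and \(U_S\), leafwise area contraction coming from plurisuperharmonicity of \(-\log\norm{\cdot}\), transverse expansion obtained by identifying the Bott connection on \(\NF\) with the connection induced by \(V\) on \(\mathcal{O}(d+2)\), and a cocycle lemma producing the invariant strong stable and unstable distributions, which are then integrated into foliations of class \(C^{1,0}\).

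Once that structure is in place, the paper does not hand-build a fixed-point operator: it applies Robinson's structural stability theorem for flows on manifolds with boundary to \(M=\Ptwo\setminus \text{Int}(U_B\cup U_S)\), whose chain recurrent set is contained in the compact maximal hyperbolic set \(K=\Ptwo\setminus\left(\Att_\W(B)\cup\Rep_\W(\sing)\right)\); this yields in one stroke an orbital conjugacy \(\psi\) between \(\Phi_\W\) and \(\Phi_{\W'}\) on \(\Ptwo\setminus B\), hence a conjugacy of the Julia sets, since the weak stable leaves of the flows are exactly the leaves of the foliations. Even granting that, the final gluing is not a matter of checking that three pieces agree along a common boundary: the paper must \emph{modify} \(\psi\) on all of \(\Rep_\W(K)\), via a correction by the cocycle on weak unstable leaves, so as to make it compatible with a prescribed homeomorphism \(l:B\to B'\), and then extend it across the bands (the leaves of \(\FF\) restricted to \(\Rep_\W(\sing)\)) by explicit formulas written in the coordinates \(\pi_p\) coming from a locally free action of \(\AF\) on \(\Ptwostar\) whose orbits are the leaves; this affine/hyperbolic structure on the leaves is what makes the extension well defined and, after a delicate argument at the annular bands, continuous. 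None of this apparatus --- Robinson's theorem, the adapted metric and hyperbolic splitting, the affine action and the band analysis --- appears in your proposal, so the proof as proposed does not go through.
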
 

On commence par démontrer que l'ensemble errant construit à la proposition~\ref{p: fibration en disques} est exactement l'ensemble de Fatou.
La stabilité structurelle de l'ensemble de Fatou découle alors de la proposition \ref{p: fibration en disques} (la propriété \textbf{\(\Pr\)} est stable). 
La stabilité structurelle de l'ensemble de Julia du feuilletage repose sur des propriétés d'hyperbolicité du champ \(W\). Ces propriétés sont établies dans la partie \ref{s: hyperbolicite} : on construit une métrique complète sur \(\Ptwostar =\Ptwo \setminus \left( B\cup \sing \right)  \) pour laquelle le feuilletage stable faible\footnote{Il s'agit d'un feuilletage réel de codimension~1 sur \(\Ptwostar\), dont la distribution tangente est continue, dont les feuilles sont \(C^\infty\), qui est le produit d'un feuilletage de Reeb par une droite au voisinage des points de~\(\sing\) et, localement, un livre ouvert au voisinage des points de \(B\).} de \(W\) est le feuilletage \(\FF\), alors que le feuilletage instable faible est un feuilletage de dimension \(3\) réel transverse à \(\FF\). Nous en déduisons la stabilité structurelle de \(W\) par un théorème de Robinson \cite{Robinson}, ainsi que la stabilité structurelle de l'ensemble de Julia du feuilletage \(\FF\) qui est aussi l'ensemble d'attraction de l'ensemble non errant de \(W\) dans \(\Ptwostar \). 

Il faut ensuite recoller les morceaux pour établir la stabilité structurelle globale. Pour cela, nous construisons une action localement libre du groupe affine sur \(\Ptwostar \) dont les orbites sont les feuilles de la restriction de \(\FF\) à \(\Ptwostar\) (\(\mathsection\)~\ref{ss: structure hyperbolique feuilletee}). La stabilité structurelle est établie à la partie \ref{s: stabilite structurelle}.

Pour conclure, nous montrons que le champ de Jouanolou en degré \(2\) satisfait les propriétés \textbf{\(\Pr\)} et \textbf{\(\Qr\)} puis, en utilisant les symétries du feuilletage, que la surface~\(B\) est biholomorphe à la quartique de Klein (proposition \ref{p: caracterisation de la quartique de Klein}). La propriété  \textbf{\(\Qr\)} est élémentaire et est établie au lemme \ref{l: les singularites de FJ2 sont des sources}. La propiété \textbf{\(\Pr\)} est plus délicate à établir : il s'agit de montrer que, étant donné trois nombres complexes quelconques \(x,y,z\in \C\) non tous nuls, on a l'implication suivante 
\begin{equation}\label{eq: implication} 
x\overline{y}^2 + y\overline{z}^2 + z\overline{x}^2=0 \ \Longrightarrow \ 2 |\bar{x}yz^2 + \bar{y}zx^2 + \bar{z}xy^2| < |x|^{4} + |y|^{4} + |z|^{4}.
\end{equation} 
À la partie \ref{s: numerique}, nous ramenons la démonstration de  \ref{eq: implication} à la vérification d'un nombre fini d'inégalités explicites sur des entiers que l'on a confiée à un ordinateur.

%Une démonstration assistée par ordinateur, que nous décrivons à la partie \ref{s: numerique}, nous permet de vérifier \ref{eq: implication}.

\section{Notations}

\begin{itemize}
\item \( \Ptwo\) plan projectif complexe
\item \(\Pi : \C^3\setminus \{0\} \rightarrow \Ptwo\) application quotient
\item\((x,y,z)\cdot (x',y',z')= x\overline{x'} +y \overline{y'} + z\overline{z'}\) produit hermitien standard sur \(\C^{3}\)
\item \( V\) champ de vecteurs homogène sur  \(\C^{3}\)
\item \(\J2\) champ de Jouanolou sur \(\C ^3\) (éq. \ref{eq: def Jouanolou})
\item \( \FV\) feuilletage induit par \( V\)  
\item \(\FF\) quotient de \(\FV\)   sur \(\Ptwo\) 
\item \( \FJ2\)  feuilletage de \(\Ptwo\) induit par \(\J2\)
\item \( \TF\) fibré tangent holomorphe à \(\mathcal F\)
\item \( \TRF\) fibré tangent réel à \(\FF\) (défini sur la partie régulière \(\reg (\FF)\) de \(\FF\))   
\item \( \NF\) fibré normal holomorphe à \(\mathcal F\)
\item \( \NRF\) fibré normal réel à \(\FF\) (défini sur \(\reg (\FF)\))   
\item \(g\) métrique hermitienne sur \( T\mathcal F\) (\(\mathsection\)  \ref{ss: metrique sur TF})
\item \( \tilde{g}\) métrique relevée sur \( T\FV\) (éq. \ref{eq: metrique relevee})
\item \( \tilde{\W}\) gradient le long des trajectoires de \( \FV\) de la fonction \( -\log ||\cdot || \)
\item \( \W\) champ de vecteurs de \(\Ptwo\) obtenu comme projection de \(\tilde{\W}\), appelé champ réel
\item  \(\{ \Phi_{\W}^t\}_{t\in \R}\) flot induit par \(\W\)
\item \(\WF:= \TRF / \R \W\)
\item \( B\) section transverse (éq. \ref{eq: section transverse})
\item \( U_B\) voisinage tubulaire de \( B\) 
\item \(\sing \) ensemble singulier de \(\FF\)
\item \( U_S\) voisinage tubulaire de \(\sing\)
\item \(\Ptwostar = \Ptwo \setminus \left(B\cup \sing\right) \) 
\item \(\Pr\) : voir déf.~\ref{propriete-PB}
\item \( \Qr\) : voir déf.~\ref{propriete-PS}
\item \(\FF_{A}\) : restriction de \(\FF\) à la sous-variété \(A\), i.e. défini par la distribution \( TA\cap T^{\R}\FF\)
\item \( \mathcal R\) feuilletage de \(\partial U_S \) transverse à \(\FF_{\partial U_S}\) 
\item \(h\) métrique riemannienne sur \( \Ptwostar \) adaptée à \(W\)
\item \( \FSFo _W, \FSFa _W , \FIFo  _W, \FIFa  _W \) feuilletages stable fort, stable faible, instable fort, instable faible du champ \(W\) (\(\mathsection\)~\ref{ss: feuilletages stables et instables})
\item \( D \) saturé de \(B\) par le feuilletage \(\FF\) appelé ensemble errant de \(\FF\) (\(\mathsection\)  \ref{ss: domaine errant de F})
\item \(\Fatou(\FF), \Julia(\FF)\) ensembles de Fatou et de Julia de \(\FF\) (\(\mathsection\) \ref{ss: decomposition Fatou Julia})
\item \(\text{Rép} _W (\square) = \{p\ |\ \lim _{t\rightarrow -\infty} d(\Phi_{\W}^t (p), \square ) =0\} \)~: ensemble de répulsion de \(\square\) pour le champ \(W\) 
\item \( \text{Att}_W (\square) = \{p\ |\ \lim _{t\rightarrow +\infty} d(\Phi_{\W}^t (p), \square ) =0\} \)~: ensemble d'attraction de \(\square\) pour le champ \(W\)
\item \(K= \Ptwo \setminus \left( \Rep_\W (\sing) \cup \Att_\W (B) \right) \): ensemble hyperbolique maximal (\(\mathsection\)~\ref{eq: ensemble hyperbolique})
\item \( \W_R\) reparamétrage du champ \(\W\) (éq. \ref{eq: reparametrisation de W})
\item \(\hyp\) métrique hyperbolique sur \(\TRF _{|\Ptwostar} \) (éq. \ref{eq: metrique hyperbolique})
\item \(\pi : \AF\times \Ptwostar \rightarrow \Ptwostar\) action du groupe affine (éq \ref{eq: def pi})
\item \(\pi_p=\pi (\cdot , p ) \) paramétrage des feuilles par \( \AF\) (éq \ref{eq: coordonnees R^2})
\item \( V'\) perturbation de \(V\), \( \FF'\) feuilletage induit par \(V'\), \(\W'\) champ réel induit par \(V'\) (\(\mathsection\)~\ref{ss: stabilite Julia})
\item \( \psi : \Ptwostar \rightarrow \Ptwostarprime \) conjugaison topologique orbitale de \(\W\) à \(\W'\) (prop.\ref{p: stabilite structurelle W})
\item \( \Psi : \Ptwostar\rightarrow \Ptwostarprime \) modification de \(\psi\)
\item \(\beta\subset F\) bande (i.e. composante connexe de \( F\setminus \text{Att}_{\W} (K)\)) (\(\mathsection\) \ref{ss: bandes})
\item \(l: B\rightarrow B'\) homéomorphisme proche de l'identité (éq. \ref{eq: homeomorphisme entre B et B'})
\end{itemize}

%%%%%%%%%%%%%%%%%%%%%%%%%%%%%%%%%%%
%%%%%%%%%%%%%%%%%%%%%%%%%%%%%%%%%%%
\section{Le champ réel et les propriétés \textbf{\(\Pr\)} et \textbf{\(\Qr\)}} %%
%%%%%%%%%%%%%%%%%%%%%%%%%%%%%%%%%%%
%%%%%%%%%%%%%%%%%%%%%%%%%%%%%%%%%%%

\subsection{Feuilletages de \(\Ptwo\)} 

Le plan projectif complexe \(\Ptwo\) admet un fibré tangent holomorphe \(T\Ptwo\) dont les sections locales sont les champs de vecteurs holomorphes locaux sur \( \Ptwo\). Il admet également un fibré tangent réel \(T^\R \Ptwo\) dont les sections sont les champs de vecteurs réels.  Un champ de vecteurs holomorphe s'étend naturellement en une dérivation agissant sur les fonctions lisses à valeurs complexes.
La partie réelle de cette extension définit une dérivation réelle, c'est-à-dire un champ de vecteurs réel. Le flot induit par la partie réelle d'un champ de vecteurs holomorphe s'obtient par restriction du flot induit par ce dernier aux temps réels, modulo le facteur multiplicatif~\(1/2\).
On a un isomorphisme réel naturel entre le fibré tangent holomorphe et le fibré tangent réel, induit par l'application qui à un champ de vecteurs holomorphe associe sa partie réelle.

Un feuilletage algébrique complexe de \(\Ptwo\) est la donnée d'un morphisme \(m: \TF\rightarrow T\Ptwo\) d'un fibré en droites holomorphe \(\TF\) au-dessus de \( \Ptwo\) qui s'annule au-dessus d'un ensemble fini de \(\Ptwo\) (\cite{Brunella}). Le fibré \(\TF\) s'appelle le fibré tangent holomorphe de \(\FF\), le lieu où \(m\) s'annule l'ensemble singulier \(\sing\) de \(\FF\) et son complémentaire la partie régulière \(\reg (\FF)\).  Par définition, le degré \(d\) de \(\FF\) est le nombre de tangence de \(m (\TF)\) avec une droite générique de \(\Ptwo\) et on a alors \(\TF \simeq \mathcal{O}(1-d)\).

Nous dirons qu'un champ de vecteurs holomorphe défini sur un ouvert de~\(\Ptwo\) définit \(\FF\) s'il est l'image par \(m\) d'une section de \(\TF\) qui ne s'annule en aucun point. Dans la partie régulière du feuilletage, les extensions analytiques maximales des germes de courbes intégrales d'un champ de vecteurs holomorphe définissant~\(\FF\) forment des courbes holomorphes immergées dans \(\Ptwo\) appelées les feuilles de ~\(\FF\). %La feuille de \(\FF\) passant par un point \(q\) de la partie régulière de \(\FF\) est notée \(\FF (q)\). 

Dans la partie régulière de \(\FF\), les feuilles sont tangentes à la distribution \( \TRF\subset T^\R {\Ptwo}_{|\reg (\FF)}\) définie comme l'image par l'identification naturelle \( T\Ptwo \rightarrow T^\R \Ptwo\) du sous-fibré \( m((\TF )_{| \reg (\FF)} )\subset T {\Ptwo}_{|\reg (\FF)}\) ; observons que \(\TRF\) est naturellement isomorphe à la restriction du fibré tangent holomorphe \(\TF\) à l'ensemble régulier de \(\FF\). 

De même, on définit  le fibré normal \(\NF\) de \(\FF\) comme étant le dual du fibré associé au faisceau des \(1\)-formes holomorphes sur \(\Ptwo\) contenant \(m(\TF)\) dans leur noyau (ce faisceau est localement libre et donc bien associé à un fibré en droites holomorphe \cite{Brunella}). Dans la partie régulière de \(\FF\), ce fibré en droites s'identifie à \( T\Ptwo / m(T\FF)\) et donc au fibré normal réel \(\NRF:= T^\R \Ptwo / T^\R \FF\) via l'identification naturelle  \(T\Ptwo \rightarrow T^\R \Ptwo \).

\subsection{Projectivisation d'un champ homogène et métrique sur \( \TF\)}\label{ss: metrique sur TF}
 
\begin{lemme}\label{l: divergence nulle} Étant donné un feuilletage~\(\FF\) sur \(\Ptwo\) de degré \(d\), il existe un champ de vecteurs \(V\) holomorphe homogène de degré~\(d\) sur \(\C^{3}\), de divergence nulle et tel que la projectivisation du feuilletage \(\FV\) induite par $V$ sur \(\C^{3} \setminus \{0\}\) est le feuilletage \(\FF\). Ce champ est unique modulo multiplication par une constante non nulle et  il est colinéaire au champ radial seulement dans un nombre fini de directions de \(\C^3\) dont les projectivisations sont les singularités de \(\FF\).\end{lemme}

\begin{proof}
L'existence d'un champ de vecteurs \(V\) homogène de degré \(d\), radial uniquement au-dessus du lieu singulier de \(\FF\) et tel que la projectivisation de \(\FF_V\) est égale à \(\FF\), se trouve dans \cite{Lins Neto}. Pour assurer que l'on peut trouver un tel champ \(V\) qui soit de plus à divergence nulle, il suffit de considérer le champ 
\[ V- \frac{\text{div} (V) }{d+2}R ,\]
où \(R= x\frac{\partial}{\partial x} + y \frac{\partial}{\partial y} + z \frac{\partial}{\partial z} \) est le champ radial. L'unicité est évidente.
\end{proof}

On rappelle que le fibré tautologique \(\mathcal{O}(-1)\) s'identifie à \(\C^3 \setminus \{0\}\) en dehors de sa section nulle. Le fibré tangent \(\TF\) de \( \FF\)  s'identifie alors à la puissance \((d-1)\)-ième du fibré tautologique via l'application homogène de degré \((d-1)\)
\begin{equation}\label{eq: isomorphisme TF} p\in \C ^ 3\setminus \{0\} \mapsto D_p\Pi (V(p)) \in T_{\Pi(p)} \mathcal F \end{equation}
définie en dehors du lieu singulier de \(\FF\). Dans cette formule, \( \Pi \) désigne l'application quotient \(\C^{3} \setminus \{0\} \rightarrow \Ptwo\).   

On notera \( \tilde{g} \) la métrique\footnote{Cette métrique est singulière aux points où \(V\) s'annule.} sur \( T\FV\) définie par 
\begin{equation}\label{eq: metrique relevee}  \tilde{g}_p ( V(p) ) = \norm{p}^{2d-2}. \end{equation} 
Cette métrique est invariante par multiplication par les scalaires sur \(\C^3\setminus \{0\}\) et définit donc une métrique hermitienne sur \(\TF\) que l'on note \(g\).

%On se donne une métrique hermitienne  $g$ sur \(\TF \). Via l'isomorphisme donné par \eqref{eq: isomorphisme TF}, elle est déterminée par la fonction \(h: \C^{3} \setminus \{0\} \rightarrow \R^{>0}\) qui est définie en dehors du lieu de tangence de \(\FV\) avec le champ radial par \begin{equation} \label{eq: metrique sur TF} h(p)= g_{[p]} ( D\Pi _{[p]} V(p)) .\end{equation} Cette fonction est \((2d-2)\)-homogène -- c'est-à-dire vérifie pour tout $\lambda \in \C^*$ et tout \(p\in \C^{3}\setminus \{0\}\),\( h(\lambda p) =  | \lambda  |^{2d-2}  h(p) \) -- et réciproquement toute fonction strictement positive \( (2d-2)\)-homogène sur \(\C^{3} \setminus \{0\}\) détermine une métrique hermitienne sur \( \TF\) via la formule \eqref{eq: metrique sur TF}.Dans ce qui suit, on notera $g$ la métrique hermitienne associée à la fonction \begin{equation} \label{eq: choix metrique TF} h(p) = ||p||^{2d-2}\end{equation}via la formule \eqref{eq: metrique sur TF}, où pour rappel \( ||p||= \sqrt{(p\cdot p)}\). 

\begin{lemme}
On suppose que \(V\) ne s'annule pas sur \(\C ^3 \setminus \{0\}\) et que \(d \geq 2\). Munies de la métrique hermitienne \(\tilde{g}\), les feuilles de \(\FV\) sont des surfaces complètes à courbure strictement négatives, sauf les feuilles radiales qui sont isométriques à des cylindres bi-infinis euclidiens \( \R^2 / l \Z\) avec \(l>0\). \end{lemme}

\begin{proof}[Démonstration]
  Si \(g\) est une métrique hermitienne sur une surface de Riemann, sa courbure s'exprime par \( -\Delta_g \log \norm{V}_g\) où \(V\) est un champ de vecteurs holomorphe local qui ne s'annule pas. La formule \eqref{eq: metrique relevee}  montre donc que la courbure de la métrique \( \tilde{g} \) le long des feuilles de \(\FF_V\) est donnée par l'expression \( -\Delta_{\tilde{g}} \log \norm{p}^{d-1}  \). Or la fonction \( \log \norm{p}^{d-1}\) est pluri-sous-harmonique, et strictement dans les directions autres que radiales, ce qui montre que la courbure de la restriction de \(\tilde{g}\) aux feuilles de \(\FF_V\) est strictement négative.   

Pour montrer la complétude, il suffit de constater que, si l'on introduit la métrique hermitienne standard \(g_0\) sur \(\C^{3}\), on a l'inégalité \( c||p||^{2d}\leq   g_0 (V)_p \leq d ||p||^{2d} \) valable pour certaines constantes \(c,d>0\) indépendantes de \(p\), et par conséquent on obtient 
\[       c'  \frac{g_0} {\norm{\cdot}^2} \leq \tilde{g} \leq  d' \frac{g_0} {\norm{\cdot}^2}     \]
avec \(c'= 1/d\) et \(d'= 1/c\). La complétude de \(g\) en restriction aux feuilles de \(\FV\) découle de celle de la métrique \(\frac{g_0} {\norm{\cdot}^2} \) sur \( \C^{3}\setminus \{0\} \).

La dernière assertion du lemme vient de ce que les feuilles radiales de \(\FV\) sont topologiquement des cylindres et que toutes les métriques plates complètes sur de telles surfaces sont isométriques à des cylindres euclidiens bi-infinis.   
\end{proof}

%\begin{corollaire}\label{c: revetement}
%Pour tout \(p\in \Pi^{-1} (\reg (\FF))\), la restriction de \(\Pi \) induit un revêtement de \(\FF_V(p)\) sur \(\FF([p])\).
%\end{corollaire}

\subsection{Définition du champ réel}\label{ss: definition de W}
Introduisons la fonction \(f: \C^{3}\setminus\{0\} \rightarrow \R\) définie par \( f(p) := - \log ||p||^{2} \). Son gradient le long des feuilles de \( \FV\), vis-à-vis de la métrique hermitienne \(\tilde{g}\), est un champ de vecteurs \(\tilde{\W}\) sur \(\C^{3} \setminus \{0\}\). Pour tout scalaire non nul \(\lambda\in \C^*\), on a \( f(\lambda \, \cdot) = - \log |\lambda |^2 + f(\cdot )\), ce qui montre que \( df \) est invariante par multiplication par les scalaires et, par conséquent, qu'il en est de même pour \(\tilde{\W}\). Il existe donc un champ de vecteurs analytique \( \W\) sur \(\Ptwo \) tel que, pour tout \(p\in \C^{3}\setminus\{0\} \), on a \(D\Pi _p ( \tilde{\W}(p)) = \W([p])\): ce champ est appelé le \textit{champ réel}.

\begin{lemme} \label{l: expression de Wtilde} On a  \( \tilde{\W} = \Re \left( \tilde{\rho} V \right)\),  où \(\tilde{\rho} (p)=  -2 \frac{(p\cdot V(p) )}{\norm{p}^{2d}}\) pour tout \(p \in \C^{3} \setminus \{0\}\).  
\end{lemme} 

\begin{proof}[Démonstration]
Introduisons la métrique hermitienne \( g_1\) sur \(T \FV \) telle que \( g_1 (V) =1\). Autrement dit, si l'on paramètre la feuille passant par le point \(p_0\) de \( \C^{3} \setminus \{0\} \) par la courbe intégrale \( t\mapsto p(t)\) de l'équation \( \dot{p} = V(p)\) passant par \( p (0) = p_0\), alors la métrique \(g_1\) est la métrique hermitienne standard \(  |dt|^2 \). Comme \( \tilde{g}(V) = h(p) \), on en déduit \( \tilde{g} = h g_1\) puis la formule 
\begin{equation}\label{eq: comparaison gradient} \tilde{\W} = \nabla _{\tilde{g}}  f = h^{-1}  \nabla_{g_1} f .  \end{equation}
Or dans la coordonnée~\(t \) décrite plus haut, on a 
\[ d \log \norm{p}^{2} = 2 \frac{ \Re \left( (p\cdot  V(p) ) \overline{dt} \right) }{\norm{p}^{2}} , \]
ce qui montre que
\[\nabla _{ g_1} f =- \Re \left( \frac{2 (p\cdot V(p) ) }{ \norm{p}^2} V \right). \] 
Le lemme découle de \eqref{eq: comparaison gradient} et du  fait que \( h = \norm{p}^{2d-2} \). 
 \end{proof} 

\begin{corollaire}
Le champ \(\W\) s'annule sur l'union de  l'ensemble singulier~\(\sing\) de~\(\mathcal F\) et du lieu défini par  
\begin{equation}\label{eq: section transverse} B :=\Pi \big(  \{  R\cdot V = 0\} \big).\end{equation}
\end{corollaire}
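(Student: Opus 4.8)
Le plan est d'exploiter directement l'expression \(\tilde{\W} = \Re(\tilde{\rho}\,V)\) établie au lemme~\ref{l: expression de Wtilde}, où \(\tilde{\rho}(p) = -2\,(p\cdot V(p))/\norm{p}^{2d}\), conjointement avec la relation qui définit le champ réel, à savoir \(D\Pi_p(\tilde{\W}(p)) = \W([p])\). Comme \(\W\) est le projeté de \(\tilde{\W}\) par \(\Pi\), il suffit de vérifier que \(\tilde{\W}(p)\) appartient au noyau de \(D\Pi_p\) dès que \(\Pi(p)\in\sing\cup B\). Je traiterais séparément les deux lieux, car l'annulation s'y produit pour des raisons de nature différente.

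Pour le lieu \(B\), le raisonnement serait immédiat. Par définition, \(B = \Pi(\{R\cdot V = 0\})\) et, puisque \(R(p)=p\), la fonction \(R\cdot V\) vaut \(p\cdot V(p)\) au point \(p\). Si \(\Pi(p)\in B\), le facteur scalaire \(\tilde{\rho}(p) = -2\,(p\cdot V(p))/\norm{p}^{2d}\) s'annule donc, ce qui entraîne \(\tilde{\W}(p) = \Re(\tilde{\rho}(p)\,V(p)) = 0\) et, a fortiori, \(\W([p]) = D\Pi_p(0) = 0\).

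Pour le lieu singulier \(\sing\), l'annulation proviendrait cette fois du noyau de la projection et non du facteur \(\tilde{\rho}\), qui ne s'annule pas en général. J'observerais qu'en un point \(p\) au-dessus de \(\sing\), le champ \(V(p)\) est colinéaire au champ radial, soit \(V(p) = \lambda\,p\) avec \(\lambda\in\C^*\) (on rappelle que \(V\) ne s'annule pas sur \(\C^3\setminus\{0\}\)). Le vecteur \(\tilde{\rho}(p)\,V(p)\) est alors un multiple complexe de \(p\), et sa partie réelle est tangente à la fibre \(\Pi^{-1}([p]) = \C^*\,p\). La principale vérification, qui constitue le seul point véritablement non formel de la preuve, est l'identification du noyau de \(D\Pi_p\) avec l'espace tangent réel à cette fibre, c'est-à-dire le plan réel \(\{\,\nu\,p\ :\ \nu\in\C\,\}\) ; cette identification résulte de ce que \(\Pi\) est constante le long des droites complexes épointées. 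On en déduirait \(\tilde{\W}(p)\in\ker D\Pi_p\), puis \(\W([p]) = 0\), ce qui achèverait la démonstration.
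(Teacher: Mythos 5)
Your proof is correct and takes essentially the same route as the paper's: both rest on the formula \(\tilde{\W}=\Re(\tilde{\rho}\,V)\) of the lemme~\ref{l: expression de Wtilde}, the paper merely compressing your two cases into the single observation that on \(\{R\cdot V=0\}\cup\Pi^{-1}(\sing)\) the field \(\tilde{\W}\) is tangent to the complex radial distribution, i.e.\ lies in \(\ker D\Pi\) (vanishing of the factor \(\tilde{\rho}\) over \(B\), radiality of \(V\) over \(\sing\)). The only step you use without comment — that \(\Pi(p)\in B\) forces \(p\cdot V(p)=0\), i.e.\ that \(\{R\cdot V=0\}\) is a cone — follows at once from the homogeneity of \(V\), and is equally implicit in the paper's proof.
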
 

\begin{proof}[Démonstration]
D'après le lemme \ref{l: expression de Wtilde}, \( \{  R\cdot V = 0\} \cup \Pi^{-1} (\sing)\) est le lieu où \(\tilde{\W}\) est tangent à la distribution radiale complexe. 
\end{proof}

Notons que \(B\) est en fait une courbe mixte de bidegré \((1,d)\) au sens de Oka~\cite{Oka}. En particulier, \(B\) est non vide (sauf éventuellement en degré~1 pour certains champs de vecteurs non génériques). 

\subsection{Les singularités de \(\W\) le long des feuilles et la propriété \textbf{\(\Pr\)}}  \label{ss: Pr}
Le résultat suivant donne des informations sur la nature des singularités du champ \(\W\) le long des feuilles, c'est-à-dire en chaque point de l'ensemble \(B\) défini par \eqref{eq: section transverse}. On rappelle qu'une singularité d'un champ de vecteurs sur une variété est une source si toutes les valeurs propres complexes du champ en cette singularité ont une partie réelle strictement positive, une selle si les parties réelles des valeurs propres sont non nulles et certaines de signes opposés, et un puits si toutes les valeurs propres sont de parties réelles strictement négatives. 

\begin{lemme}\label{l: singularites longitudinales de \W}
En restriction à une feuille de \(\mathcal F\), une singularité de \(\W\) 
\begin{enumerate}
\item est un puits  ssi  \(\norm{V}^2 > | DV(V) \cdot R |\);
\item est une selle  ssi \(\norm{V}^2 < | DV(V) \cdot R |\);
\item n'est jamais une source. 
\end{enumerate}
\end{lemme}

Dans ce lemme, on  rappelle que $p \cdot p'= x\overline{x'}+y\overline{y'}+ z\overline{z'}$ est le produit hermitien standard sur $\mathbb C^3$, $R= x\frac{\partial}{\partial x} +y\frac{\partial}{\partial y} + z\frac{\partial}{\partial z}$ le champ radial,
 $D$ la connexion standard sur \( T\C ^3\). Observons alors que si l'on note \( V=\sum_{k\in\{x,y,z\}} V_k \frac{\partial}{\partial k}\),
 $DV(V)$ est le champ de vecteurs défini par  
 $$DV(V)_k = \sum_{l\in\{x,y,z\}} V_l \frac{\partial V_k}{\partial l } \text{ pour } k\in\{x,y,z\}. $$ 
 
\begin{proof} [Démonstration.] 
Le développement de Taylor d'une fonction lisse $\varphi: \C \rightarrow \R$ au voisinage d'un point critique $t_0$ est donné par
\[  \varphi(t) = \varphi(t_0) +\frac{1}{2} \frac{\partial ^2 \varphi }{\partial t^2}\, (t-t_0)^2 +\frac{1}{2} \frac{\partial ^2 \varphi }{\partial \overline{t}^2} \, \overline{(t-t_0)}^2 +  \frac{\partial ^2 \varphi}{\partial t \partial \overline{t} }\, |t-t_0|^2  + o (|t-t_0|^2), \]
ce qui montre que 
\[  D^2 \varphi (t) = \Re \left(\frac{\partial ^2 \varphi }{\partial t^2} \, (t-t_0)^2\right) +  \frac{\partial ^2 \varphi}{\partial t \partial \overline{t} }\, |t-t_0|^2.   \]
Cette forme quadratique est

\begin{itemize} 
	\item dégénérée ssi $\frac{\partial ^2 \varphi}{\partial t \partial \overline{t} } = |\frac{\partial ^2 \varphi }{\partial t^2}|$;
	\item de signature $(0,2)$ ssi $\frac{\partial ^2 \varphi}{\partial t \partial \overline{t} } <- |\frac{\partial ^2 \varphi }{\partial t^2}|$;
	\item de signature $(1,1)$ ssi $|\frac{\partial ^2 \varphi}{\partial t \partial \overline{t} } | < |\frac{\partial ^2 \varphi }{\partial t^2}|$;
	\item de signature $(2,0)$ ssi $\frac{\partial ^2 \varphi}{\partial t \partial \overline{t} } > |\frac{\partial ^2 \varphi }{\partial t^2}|$.
\end{itemize}

On applique alors ce qui précède à la fonction~\(f= - \log ||\cdot ||^{2} \) en restriction à \(\mathcal F_p\), où \(\Pi(p)\) est la singularité de~\(\W\).
En effet, il suffit d'établir les mêmes propriétés pour le champ \(\tilde{\W}\) au point~\(p\) puisque ce dernier est le gradient de la fonction~\(f\).
On obtient alors que  \(p\) est un point critique
\begin{itemize}
	\item dégénéré ssi \( \norm{ V }^2 = | DV(V) \cdot R | \);
	\item non dégénéré d'indice \(2\) ssi \( \norm{ V }^2 > | DV(V) \cdot R | \);
	\item non dégénéré d'indice \(1\) ssi \( \norm{V}^2 < | DV(V) \cdot R | \);
	\item n'est jamais non dégénéré d'indice \(0\),
\end{itemize}
et on conclut la preuve du lemme en utilisant le fait que la fonction \( f\) est pluri-sur-harmonique.  \end{proof}

%\[\frac{\partial ^2 \norm{x(t)} ^2 }{\partial t ^2} = DV(V) \cdot R \quad \text{ et } \quad \frac{\partial ^2 \norm{x(t)} ^2 }{\partial t \partial \overline{t}} = \norm{V}^2, \]
%la vérification des deux premiers points du lemme est complète. Le dernier découle de ce que 

\begin{definition}[Propriété \textbf{\(\Pr\)}]\label{propriete-PB} Un feuilletage du plan projectif complexe vérifie la propriété \textbf{\(\Pr\)} si le champ \(V\) ne s'annule pas sur \(\C ^{3}\setminus \{0\}\) et si les singularités du champ \(\W\) le long des feuilles de \(\FF\) sont des puits. \end{definition}

\begin{lemme}\label{l: construction de UB}
Supposons que \(\FF\) vérifie la propriété \textbf{\(\Pr\)}. Si~\(B\) est non vide, alors~\(B\) est une section transverse à \(\FF\) qui admet un voisinage tubulaire \( U_B\) tel que, d'une part le feuilletage en restriction à \(U_B\) est une fibré lisse localement trivial en disques et, d'autre part, il existe un difféomorphisme \(  U_B \setminus B \rightarrow \R^{\geq 0} \times \partial U_B\) tel que le champ \(\W\) est égal au champ \(\frac{\partial}{\partial t}\) dans les coordonnées \( (t,q)\in \R^{\geq 0} \times \partial U_B\).
\end{lemme}

\begin{proof}
Sur \(\C^3\setminus \{0\}\), les points critiques de la fonction $f=-\log ||.||$ en restriction à chaque feuille de \( \FF_V\) sont non dégénérés~: $\Pi ^{-1} (B)$ étant le lieu des points où la différentielle de \( f\) le long des feuilles de \(\FF_V\) est nulle, il s'agit d'une section transverse du feuilletage $\FF_V$. En effet, l'application \(\C^3\setminus \{0\}\rightarrow  {\TRF _V}^*\) qui à un point  associe la restriction de la différentielle de \(f\) au fibré tangent \(\TRF _V\) (dans une trivialisation locale de \( {\TRF_V}^*\)) est un difféomorphisme local le long des feuilles au voisinage d'un point critique de \(f\) le long de \(\FF_V\) si et seulement si le point critique est non dégénéré le long de sa feuille. La projection \(B\) de $\Pi ^{-1} (B)$ dans \(\Ptwo\) est donc également une section transverse de \(\FF\). 

On construit \( U_B\) en considérant l'application \(\text{exp}_\FF : (\TRF)_{|B}  \rightarrow \Ptwo\)  qui associe à un vecteur \( v\in T_b ^ \R \FF \)  l'extrémité \(\text{exp} _{\FF}(v) := \gamma(1)\) de la géodésique \(\gamma : [0,1] \rightarrow (\FF(b ),g) \) partant de \(\gamma(0)=b\) dans la direction \(\frac{d\gamma}{dt} (0)=v  \).  Le théorème des fonctions implicites montre que si \(\varepsilon>0 \) est suffisamment petit, la restriction de \(\text{exp} _{\FF} \) à \( \{v\in (\TRF)_{|B}\ |\ \norm{v} \leq \varepsilon \} \) est un difféomorphisme sur son image. De plus, par hypothèse, les valeurs propres de \(\W\) en restriction à \( T_b ^\R \FF\) sont toutes les deux strictement négatives en tout point \(b\in B\), donc par compacité de \(B\), si l'on choisit \(\varepsilon>0\) suffisamment petit, toute trajectoire de \( \W\) partant d'un point de \( U_B \setminus B\) converge vers un point de \(B\) dans le futur et intersecte \(\partial U_B\) en un unique point. En choisissant un tel \(\varepsilon>0\), on conclut la démonstration du lemme en associant à un point \(r\) de \( U_B\setminus B\) l'unique couple \((t,q)\in \R^{\geq 0} \times \partial U_B\) où \(q\) est l'intersection de la trajectoire de \(\W\) passant par \( r\) avec \(\partial U_B\) et où \(t\) est l'unique réel positif tel que \( \Phi _W ^t (q)=r\).      
\end{proof}

\subsection{Étude de \(\W\) au voisinage d'une singularité de \(\FF\) et propriété~\textbf{\(\Qr\)}}

\begin{lemme} \label{l: partie lineaire W} 
En une singularité de \(\FF\) en laquelle \(V\) ne s'annule pas, le champ de vecteurs \(\W\) est égal à la partie réelle d'un champ de vecteurs holomorphe \(Y\) définissant \(\FF\) à l'ordre un, c'est-à-dire que 
\begin{equation} \label{eq: Y} W= \Re (Y ) +\text{termes d'ordre supérieur à } 2.\end{equation} 
\end{lemme}

\begin{proof}[Démonstration]
Soit  \( s\in \Ptwo\) une singularité de \(\FF \). Quitte à permuter les coordonnées, on peut supposer que \(s\) appartient à la carte affine \(\{ z \neq 0\}\), isomorphe à~\( \C ^2\) via l'isomorphisme \( (x,y,z) \mapsto  (u= x/ z , v= y/z)\). Le feuilletage \(\mathcal F \) est défini par le champ de vecteurs \(X = X_u \frac{\partial}{\partial u} + X_v \frac{\partial}{\partial v} \), où
\begin{equation} \label{eq: feuilletage en coordonnees affines}   X=  (V_x(u,v,1) -u V_z(u,v,1) )\frac{\partial} {\partial u}+ (V_y (u,v,1)-vV_z(u,v,1))\frac{\partial} {\partial v}  \end{equation}
Dans les coordonnées \( (u, v)\), on a  également \( \W= \Re \left(\W_u \frac{\partial}{\partial u} + \W_v \frac{\partial}{\partial v} \right)\), où 
\[ \W_u (u, v ) = \tilde{\W}_x (u, v, 1) - u \tilde{\W}_{z} (u, v ,1),\ \  \W_v (u, v ) = \tilde{\W}_y (u, v, 1) - v \tilde{\W}_{z} (u, v ,1)\]
et où les fonctions \(\tilde{\W}_k\) pour \(k\in \{x,y,z\}\) sont définies par l'équation 
\[ \tilde{\W} = \Re \left( \tilde{\W}_x \frac{\partial}{\partial x} +\tilde{\W}_y\frac{\partial}{\partial y}+\tilde{W}_z \frac{\partial}{\partial z}\right).\] 
Or le lemme \ref{l: expression de Wtilde} nous donne l'expression  \( \tilde{\W} _k= \tilde{\rho} V_k \),  où \(\tilde{\rho} (p)=  -2 \frac{(p\cdot V(p) )}{||p||^{2d}}\).  En vertu de \eqref{eq: feuilletage en coordonnees affines}, on en déduit l'expression \( \W= \Re (\rho X) \) où \( \rho (u,v) = \tilde{\rho} (u,v,1)\) et le résultat en découle en posant \(Y= \rho (s) X\) puisque par hypothèse \(\rho (s) \neq 0\). \end{proof}

Nous adopterons la définition suivante : une singularité \(s\) de \(\FF\) est hyperbolique si les valeurs propres d'un champ définissant \(\FF\) au voisinage de \(s\) ne sont pas \(\R\)-colinéaires.  %Le lemme \ref{l: partie lineaire W} permet de fixer le \(1\)-jet d'un tel champ de vecteurs : celui de \(Y\). 

\begin{definition}[Propriété \textbf{\(\Qr\)}]\label{propriete-PS} Un feuilletage \(\FF\) du plan projectif complexe vérifie la propriété \textbf{\(\Qr\)} si \(V\) ne s'annule pas sur \(\C^3\setminus \{0\}\) et si chaque singularité de \(\FF\) est d'une part hyperbolique et d'autre part une source pour le champ \(\W\).\end{definition}

Si les valeurs propres d'un champ de vecteurs holomorphe \(Y\) en une singularité \(s\) sont \( \lambda\) et \(\mu\), alors celles de sa partie réelle \( W= \Re (Y)\) sont \( \lambda/2, \mu / 2, \overline{\lambda} /2 , \overline{\mu} /2\).  Ainsi,  \(s\) est une source pour le champ \(W\) si et seulement si \( \Re (\lambda)\) et \(\Re (\mu) \) sont strictement positifs.

%, c'est-à-dire si les valeurs propres de \(Y\) ne sont pas \(\R\)-colinéaires, et si leur partie réelle est strictement positive.  \end{definition} 

\begin{lemme} \label{l: construction de US}
Soit \(\FF\) un feuilletage du plan projectif qui vérifie la propriété~\textbf{\(\Qr\)}. Alors pour tout \(s\in \sing\), il existe un voisinage \(U_s\) de \(s\) à bord lisse et un difféomorphisme \( U _s\setminus \{s\} \rightarrow \R^{\leq 0} \times \partial U_s\) qui envoie le champ \(W\) sur le champ horizontal \(\frac{\partial}{\partial t} \) dans les coordonnées \( (t, Q)\in \R^{\leq 0} \times \partial U_s\). 

Le feuilletage \(\FF\) est transverse à \(\partial U_s\) ; son intersection avec \(\partial U_s\) définit donc un feuilletage transversalement holomorphe \(\FF_{\partial U_s}\) par courbes réelles de \(\partial U_s\). Ce dernier admet deux feuilles circulaires et, dans leur complémentaire, il est difféomorphe à un produit de  la courbe elliptique \( E_s= \C / (\Z \lambda+\Z\mu) \) par un intervalle.
De plus, il existe un feuilletage lisse par surfaces \(\mathcal R\) sur \( \partial U_s \) qui est transverse à \(\FF _{\partial U_s}\).
\end{lemme}

\begin{proof} Soit \(Y\) le champ construit au lemme \ref{l: partie lineaire W}. Le théorème de linéarisation de Poincaré montre qu'il existe des coordonnées \((u,v)\) centrées en \(s\) telles que 
\[Y =\lambda u \frac{\partial}{\partial u} + \mu v \frac{\partial}{\partial v}.\]
où \(\lambda\) et \(\mu\) sont les valeurs propres de \(Y\). D'après l'hypothèse \textbf{\(\Qr\)}, ces dernières étant de partie réelle strictement positive,   il existe \(r>0\) tel que en notant \( h = |u|^2+|v|^2\), on a \( d h (W(q) ) >0\) pour tout \(q\neq s\) dans la boule \( U_ s := \{|u|^2+|v|^2\leq r^2\}\). En particulier, toute trajectoire du flot induit par \(\W\) issue d'un point \(q\in U_s \setminus \{s\}\) tend vers \(s\) lorsque le temps tend vers \(-\infty\), et aboutit à un point \(Q(q)\) de \(\partial U_s\) en un certain temps \(t(q)\geq 0\). L'application \( q\in U_s \mapsto ]-t(q), Q(q)[ \in \R^{\leq 0}\times \partial U_s\) est un difféomorphisme qui envoie le champ \(\W\) sur le champ \(\frac{\partial}{\partial t}\).

Les deux feuilles circulaires sont l'intersection des séparatrices avec la sphère \(\partial U_s\). En dehors de ces dernières, on a une intégrale première 
\begin{equation} \label{eq: integrale premiere singularite} \frac{1}{2i\pi} (\lambda \log v - \mu \log u )   \end{equation} 
à valeurs dans \( E\) dont les fibres intersectent \( \partial U_s\) sur des intervalles. C'est cette intégrale première qui confère à la restriction de \(\FF _{\partial U_s}\) la structure de fibré au-dessus de \(E\) par intervalles.  

Construisons à présent le feuilletage \(\mathcal R\). Considérons la fonction \( \rho \) égale à \( \rho= | u | ^2 \) sur \(\partial U_s\). Les niveaux \( \rho = 0 \) et \( \rho = r ^2\) sont les deux feuilles circulaires. Par contre, tous les autres niveaux \( \rho ^{ -1}(\rho_0 )\) pour \( \rho_0\in ]0, r^2[ \)  sont des sections transverses toriques à \(\FF_{\partial U_s}\). Nous allons définir le feuilletage \( \mathcal R \) via \(\rho \) sur \( \rho ^{-1} ( [ \varepsilon, r^2 -\varepsilon ]) \) pour \( \varepsilon >0 \) suffisamment petit et modifier ce fibré dans les deux tores pleins \( \rho ^{-1} ([0, \varepsilon]) \) et \(  \rho ^{-1} ([r^2- \varepsilon , r^2 ]) \) par la méthode du tourbillonnement de Reeb. 

Expliquons cette construction dans le tore plein \( T= \rho ^{-1} ([0, \varepsilon]) \), dans l'autre tore la construction étant similaire. Orientons \( \FF _{\partial U_s}\) en le voyant comme le bord du feuilletage par surfaces de Riemann \( \FF\) sur la boule \(U_s\). Le long des feuilles de \( \FF_{\partial U_s}\) ainsi orientées, la fonction \( \rho \) est croissante si \( \Im  ( \frac {\mu} {\lambda} ) >0\) et  décroissante si \( \Im (\frac{\mu} {\lambda}) < 0\). Introduisons le fibré en disques \(\theta :  T \rightarrow \R / 2\pi \Z \) définie par \( \theta (u,v) =\text{Arg} (v) \), fonction croissante sur les feuilles de \( \FF_{\partial U_s}\). Étant donné une fonction lisse \( \psi :[0,r^2]\rightarrow \R^{\geq 0} \) qui vaut \( 1\) sur un voisinage de l'origine et qui s'annule sur \([\varepsilon , r^2]\), le feuilletage défini par \[ \psi (\rho) d\theta -\text{Im}\left(\frac{\lambda}{\mu}\right) (1-\psi (\rho)) dt =0 \] est alors transverse au flot et coïncide avec le fibré donné par \( \rho\) sur un voisinage de \( \rho^{-1}(\varepsilon)\). \end{proof}

%%%%%%%%%%%%%%%%%%%%%%%%%%%%%
%%%%%%%%%%%%%%%%%%%%%%%%%%%%%
\section{Construction d'un ensemble errant} \label{ss: domaine errant de F} %%
%%%%%%%%%%%%%%%%%%%%%%%%%%%%%
%%%%%%%%%%%%%%%%%%%%%%%%%%%%%

Le but de cette partie est d'établir l'existence d'un ensemble errant (\cite{GGMS} et \(\mathsection~\ref{ss: decomposition Fatou Julia}\)) pour les feuilletages vérifiant la propriété~\textbf{\(\Pr\)}.  

\begin{theoreme} \label{t: existence domaine errant} Supposons que $\FF$ vérifie la propriété \( \Pr\). Alors le saturé de $B$ par $\FF$ est un ouvert strict $D \subset \Ptwo$ sur lequel le feuilletage est une fibration sur~\(B\) ayant une structure de fibré lisse localement trivial en disques.
\end{theoreme}

\begin{proof} 
Soit \( \Att _\W (B ) \) l'ensemble d'attraction de \(B\), c'est-à-dire l'ensemble des points \(q\in \Ptwo\) tels que \( \Phi_\W ^t (q) \) tend vers un point de \(B\) lorsque \(t\) tend vers \(+\infty\) ; il s'agit d'un ensemble invariant par \(\Phi_\W\), contenant le voisinage tubulaire  \(U_B\) de \(B\) construit au lemme \ref{l: construction de UB}, et chaque trajectoire de \(\Phi_\W\) contenue dans \(\Att_\W (B)\setminus B\) intersecte \(\partial U_B\) en un unique point. Ainsi, \(\Att_\W (B)\) est un fibré lisse localement trivial en disques au-dessus de \(B\) dont les fibres sont les ensembles d'attraction
\[  \Att_\W (b) = \{ q \in \Ptwo \ |\ \lim _{t\rightarrow +\infty} \Phi_\W ^t ( q ) = b \} \text{ pour } b \in B, \]
et la projection \( \Att_\W (B)\rightarrow B\) est holomorphe.

Remarquons que pour tout \(b\in B\),  \(\Att_\W (b)\) est contenu dans la feuille \(\FF (b)\) du feuilletage \(\FF\) passant par \(b\). Nous allons en fait voir que \(\Att_\W (b)=\FF(b)\) ce qui permettra de conclure la démonstration du théorème.
 
\begin{lemme}\label{l: croissance lineaire}
La croissance de la fonction \( f = -\log \norm{\cdot}^2 \) le long d'une portion de trajectoire de \( \tilde{W}\) contenue dans \( \Pi^{-1} (\Ptwo\setminus \text{Int} (U_B))\) est linéaire avec une constante uniforme. En d'autres termes, il existe un réel \( a >0 \) tel que pour tout \( p \in \Pi^{-1} (\Ptwo\setminus \text{Int} (U_B)) \) et tout \( t\geq 0\) tel que \(\Phi_{\tilde{\W}}^s (p)\in \Pi^{-1} (\Ptwo\setminus \text{Int} (U_B)) \) pour tout \(s\in [0,t]\),  
\[ f (\Phi_{\tilde{\W}}^t (p)) \geq a t + f(p). \]
\end{lemme}

\begin{proof}
Si \( p \in \Pi^{-1} (\Ptwo\setminus \text{Int} (U_B))\), alors toute la trajectoire \(\{ \Phi^t _{\tilde{W}} (p ) \} _{t\leq 0}\) est contenue dans \( \Pi^{-1} (\Ptwo\setminus \text{Int} (U_B)) \). Or, comme \( V\) ne s'annule pas sur \(\C^3 \), le champ \(\tilde{W}\) ne s'annule pas dans \(\Pi^{-1} (\Ptwo\setminus \text{Int} (U_B))\) (on observera que, même si \(W\) s'annule en chaque singularité de \(\FF\), le champ \(\tilde{W}\) ne s'annule pas sur \( \Pi^{-1} (\sing)\)). Ainsi, la quantité \( df ( \tilde{W}) =  \norm{\tilde{W}}_{\tilde{g}} ^2\) est strictement positive en tout point de \(\Pi^{-1} (\Ptwo\setminus \text{Int} (U_B))\). Cette dernière étant invariante par multiplication par les scalaires et \(\Ptwo\setminus \text{Int} (U_B) \) étant compact, elle est minorée par une constante \(a >0\) sur \(\Pi^{-1} (\Ptwo\setminus \text{Int} (U_B))\). Le lemme en résulte immédiatement.\end{proof}

Soit \(b\in B\) et \(p\in \C^3 \setminus \{0\}\)  tel que \(\Pi (p) =b\). La restriction de \(\Pi\) à la feuille \(\FF_V (p)\) de  \(\FF_V\) passant par \(p\) est un revêtement abélien \(\Pi_p : \FF_V(p)\rightarrow \FF(b)\)  et la restriction au domaine d'attraction \( \Att_{\tilde{\W}} (p) = \{ p ' \ | \ \lim  _{t\rightarrow +\infty} \Phi_{\tilde \W} ^t (p ') = p \} \) est un difféomorphisme \( \Att_{\tilde{\W}} (p) \rightarrow \Att_\W (b)\). Or le lemme \ref{l: croissance lineaire} montre que la restriction de \( f\) à \(\Att_{\tilde{\W}} (p)\) est propre, ce qui établit que \(\FF_V (p) = \Att_{\tilde{\W}} (p)\), et par conséquent 
\[ \FF(b)=\Pi_p(\FF_V (p)) =\Pi_p (\Att_{\tilde{\W}} (p))= \Att_\W (b).\] 
\end{proof}

%%%%%%%%%%%%%%%%%%%%%%%%%%%%%%%%%%%
\section{Hyperbolicité de \(\W\) : partie I}  \label{s: hyperbolicite} %%
%%%%%%%%%%%%%%%%%%%%%%%%%%%%%%%%%%%

Nous nous donnons un feuilletage algébrique \(\FF\) de \(\Ptwo\) qui satisfait les propriétés \( \Pr\) et \(\Qr\).

%%%%%%%%%%%%%%%%%%%%%%%%
\subsection{Hyperbolicité longitudinale} %%
%%%%%%%%%%%%%%%%%%%%%%%%

On considère dans ce paragraphe le fibré en droites réelles au-dessus de \(  \Ptwostar \) défini par 
\begin{equation} \label{eq: fibre WF} \WF := \TRF / \R \W .\end{equation} 
Comme le flot \( \Phi_\W  \) associé à \(\W\) laisse invariantes les  distributions \(\TRF\) et \( \R\W\), il se relève naturellement en un flot agissant sur \( \WF \) via sa différentielle. La métrique hermitienne \( g\) sur \( T\FF \) que l'on a définie au paragraphe \ref{ss: metrique sur TF} induit une métrique \(g\) sur \( \TRF\simeq T\FF\) dans la partie régulière de \(\FF\) et une métrique \( \gWF \) sur \( \WF \) définie par  
\begin{equation} \label{eq: metrique sur NWF} \gWF ([v]) := | \text{vol}_g (v , \W ) | \end{equation} 
pour tout \( q\in  \Ptwostar \) et tout \([v] \in \WF \). Dans cette formule, \( \text{vol}_g\) désigne la forme volume sur \( \TRF \) induite par la métrique hermitienne \(g\). 

\begin{lemme}\label{l: expansion}
La métrique \( \gWF \) sur \( \WF \) est strictement contractée par le flot \( D\Phi _W \), c'est-à-dire que pour tout \(q \in \Ptwostar \),  tout vecteur non nul \( [v]\in (\WF) _q \) et tout réel \( t>0\) on a 
\[ \gWF \left( \left[ D\Phi_{\W} ^t (v)\right] \right) < \gWF ([v]) .\]
\end{lemme}

\begin{proof}[Démonstration]
On a pour tout \(q \in \Ptwostar \) et tout couple de vecteurs \(v,w \in T_q^{\R} \FF \) 
\[ \frac{d}{d t} _{t=0} \text{vol} _g ( D\Phi_\W ^t (v), D\Phi_\W^t (w) )  = \text{div} ( \W ) (q)\ \text{vol}_g (v,w) ,\] 
où \(\text{div} (\W) \) désigne la divergence de \(\W\) le long des feuilles vis-à-vis de la métrique~\(g\). En posant \(w = \W(q)\), et en remarquant que \( D\Phi_\W^t (\W(q) ) = \W(\Phi_\W ^t (q) ) \), on s'aper\c{c}oit que le résultat est équivalent à montrer que \( \text{div} (\W) < 0\).  Or si \( q=[p]\), on a \( \text{div} (\W) (q) = \text{div} (\tilde{\W})(p) \) et le champ \(\tilde{\W}\) est le gradient de la fonction \(p\mapsto -\log \norm{p} ^2\) qui est strictement pluri-sur-harmonique en dehors des directions radiales de \(V\), ce qui conclut la démonstration du lemme. 
\end{proof} 

%%%%%%%%%%%%%%%%%%%%%%%%%%%%%%%%%%%
\subsection{Calcul de la connexion de Bott de \(\FF \) et hyperbolicité transverse}\label{ss: contraction transverse} 
%%%%%%%%%%%%%%%%%%%%%%%%%%%%%%%%%%%

Le fibré normal \(\NF\) à \(\FF\) est un fibré en droites holomorphe au-dessus de \(\Ptwo\) qui, dans la partie régulière de \(\FF\), s'identifie au quotient  \( \NF = T\Ptwo / m(T\FF ) \) (\cite{Brunella}). Ce fibré est muni d'une connexion feuilletée, c'est-à-dire  d'une connexion définie uniquement dans la direction du feuilletage (\cite[\(\mathsection\)~2.1]{Deroin Guillot}), appelée la connexion de Bott et notée \( \nabla _{Bott}\). Dans la partie régulière où le feuilletage est défini par \( \tau = cst\) dans des coordonnées feuilletées \( (z,\tau) \), la connexion de Bott s'exprime par 
\begin{equation}\label{eq: connexion de Bott} \nabla_{Bott}  \left( \left[u \frac{\partial}{\partial \tau} \right] \right)= d u \cdot  \left[\frac{\partial}{\partial \tau} \right] \end{equation}
pour toute fonction holomorphe \(u\). 

D'un autre coté, le champ \( V \) induit une connexion feuilletée \(\nabla _V\) sur le fibré tautologique \(\mathcal{O}(-1) \) dont les sections plates en restriction à chaque feuille sont les courbes intégrales du champ \(V\). Cette connexion induit une connexion \(\nabla_V^k \) le long des feuilles sur toutes les puissances \(\mathcal{O}(k)\) du fibré tautologique. 

\begin{lemme}\label{l: calcul de la connexion de Bott}
Si \(V\) est de divergence nulle, il existe un isomorphisme entre \( \NF \) et \(\mathcal{O}(d+2) \) qui envoie la connexion de Bott \(\nabla_{Bott}\) sur la connexion \( \nabla_V^{d+2} \). 
\end{lemme}

\begin{proof}[Démonstration]
Soit \(\GG\) le feuilletage singulier de \( \C^3 \setminus \{0\}\) dont la distribution tangente est   
\[  T \mathcal G = \mathbb C R \oplus \mathbb C V, \]
\(R\) désignant le champ radial. En d'autres termes, \(\GG= \Pi^* \FF\). Si \(N\GG\) désigne le fibré normal à \(\GG\), on a donc \( \Pi^* \NF = N\GG\).  
Pour $p\in \mathbb C^3\setminus \{ 0 \}$, la forme linéaire 
\[\varphi _p  (u) = \text{det} (R(p) , V(p) , u) \]
a pour noyau \( T_p \mathcal G\) et définit donc une forme linéaire non nulle sur \(N_p \GG\). Ces formes linéaires  vérifient 
\[ \varphi_{\lambda p}  (\lambda u ) = \text{det} (R(\lambda p) , V(\lambda p), \lambda u )  = \lambda ^{d+2} \varphi _p (u),\]
pour tous \( \lambda \in \mathbb C^*\), \(p\in \mathbb C^3\setminus \{0\}\) et \(u\in N_p\mathcal G\). On construit ainsi une application  \(\mathcal{O}(-1) \rightarrow {\NG }^*\)  
\begin{equation}\label{eq: application homogene} p\in \mathcal{O}(-1)_{[p]}  \mapsto \varphi_p \in {N_p \GG} ^* \simeq {N_p \FF}^*  \end{equation}  
qui est \( (d+2)\)-homogène et qui induit un isomorphisme entre \(\mathcal{O}(d+2) \) et \( \NF \). 

Dans ce qui suit on montre que la connexion de Bott sur \( \NF\) est envoyée sur la connexion \( \nabla_V ^{d+2} \) sur \( \mathcal{O} (d+2) \) par cet isomorphisme. Rappelons que ces connexions sont définies dans la direction des feuilles uniquement. 
Pour calculer la connexion de Bott, observons que le flot local \( \Phi_V \) associé au champ de vecteurs~\( V\) préserve \(\GG\) et, par conséquent, si \([v]\in N_p\GG\), alors la section de \( \NG\) le long de la feuille \(\FV(p)\) passant par \(p\) définie par 
\begin{equation}\label{eq: calcul connexion de Bott} t\in (\C, 0) \mapsto  v(t) =[D\Phi_V^t (v)] \in N_{p(t)}\GG \end{equation}
est plate. Dans cette formule, on paramètre le germe de  feuille \((\FV(p) , p)\) par le germe de surface de Riemann \((\C, 0)\) via \( t\mapsto p(t) =\Phi_V^t (p)\). 
Or \( D\Phi_V ^t (V) = V\) et, puisque \(V\) est homogène, il existe une fonction \( u\) telle que \( D\Phi_V^t (R) = R + u V\). On a alors 
\[ \varphi_{p(t)} (v_t ) = \det ( D\Phi_V ^t (R(p)) , D\Phi_V ^t ( V(p(t)) ),  D\Phi_V ^t (v) ) )  \]
et, puisque \(V\) est de divergence nulle, on en déduit la relation 
\[\varphi_{p(t)} (v_t) = \det ( R(p) , V(p), v) = \varphi_p (v) . \]
C'est exactement cela qui exprime que la connexion induite par le champ \(V\) sur \(\mathcal{O} (-1) \) est envoyée sur la connexion de Bott sur \( \NF^* \) par l'application \eqref{eq: application homogene}. 
\end{proof}

Dans la partie régulière de \(\FF\), définissons le fibré normal réel 
 \[ \NRF := T^\R \Ptwo / T^\R \FF.\] 
L'isomorphisme \( T \Ptwo \simeq T^\R \Ptwo\) induit un isomorphisme \( {\NF}_{| \Ptwo\setminus \sing}  \simeq \NRF \) qui échange les connexions de Bott sur ces deux fibrés. Il fournit également une structure de fibré lisse en droites complexes sur \( \NRF\).

\begin{corollaire} \label{c: decroissance exponentielle des derivees transverses}
Il existe une métrique hermitienne \( g_N \) sur le fibré normal \(\NRF\) qui, en dehors de \( B\cup \sing\), est strictement dilatée par le flot induit par \( W\). Plus précisément, si  \(U_B\) désigne le voisinage de \(B\) construit dans la démonstration du théorème \ref{t: existence domaine errant}, il existe une constante \(b>0\) telle que, pour tout \(\Pi(p) \in \Ptwostar \) et tout réel \(t \geq 0\) tels que la trajectoire \( \Phi_W ^{[0,t]}(\Pi(p))\) est contenue dans \( \Ptwo \setminus \text{Int} (U_B)\), on a pour tout \([v]\in N_{\Pi(p)}^{\R}\FF\), 
\begin{equation}\label{eq: decroissance}  g_N ([ D\Phi_W ^t (v) ] ) \geq \exp (bt) g_N ([v]) . \end{equation}
\end{corollaire}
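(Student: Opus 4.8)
La stratégie consiste à ramener l'étude du flot \(D\Phi_\W^t\) sur le fibré normal à un calcul de transport parallèle. Comme le champ \(\W\) est tangent à \(\FF\) et que la connexion de Bott \(\nabla_{Bott}\) est plate le long des feuilles, l'action de \(D\Phi_\W^t\) sur \(\NRF\) coïncide avec le transport parallèle de \(\nabla_{Bott}\) le long des orbites de \(\W\) : en effet, la dérivée de Lie d'une section du fibré normal dans la direction \(\W\) n'est autre que sa dérivée covariante de Bott, de sorte qu'une section \(\nabla_{Bott}\)-parallèle le long d'une orbite est préservée par \(D\Phi_\W^t\). Il suffit donc de construire une métrique hermitienne \(g_N\) sur \(\NRF\) et de montrer que la dérivée logarithmique instantanée \(r([p]) := \frac{d}{dt}\big|_{t=0}\log g_N([D\Phi_\W^t v])\), qui est une fonction bien définie sur \(\Ptwostar\) indépendante du vecteur non nul \(v\), est minorée par une constante \(b>0\) sur le compact \(\Ptwo\setminus\text{Int}(U_B)\).

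Pour construire \(g_N\) et calculer \(r\), j'utiliserais le lemme \ref{l: calcul de la connexion de Bott} : via l'isomorphisme \(\NF\simeq\mathcal{O}(d+2)\) qui échange \(\nabla_{Bott}\) et \(\nabla_V^{d+2}\), la forme \(\varphi_p=\det(R(p),V(p),\cdot)\) fournit une section parallèle de \(\NF^*\) pour la connexion de Bott duale le long de chaque feuille — c'est précisément l'identité \(\varphi_{p(t)}(D\Phi_V^t v)=\varphi_p(v)\) établie dans cette preuve. Je définirais alors la métrique duale par \(g_{N^*}(\varphi_p):=\norm{R(p)\times V(p)}^2\), ce qui, compte tenu de l'homogénéité \((d+2)\) de \(\varphi\) (assurant l'invariance par homothétie et donc la descente à \(\Ptwo\)), détermine une métrique \(g_N\) sur \(\NF\). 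Comme le transport parallèle préserve la dualité et que les orbites de \(\tilde{\W}\) dans \(\C^3\) parcourent les feuilles de \(\FV\), la dérivée cherchée s'écrit \(r=-\tilde{\W}\big(\log\norm{R\times V}^2\big)\). Il est commode d'observer l'identité \(\norm{R\times V}^2=\norm{p}^2\norm{V}^2-|R\cdot V|^2\).

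Le cœur de la preuve, et son principal obstacle, est de montrer que \(r>0\), c'est-à-dire que \(\norm{R\times V}^2\) décroît strictement le long des trajectoires de \(\tilde{\W}\). Un terme est favorable et se contrôle immédiatement : puisque \(\tilde{\W}=\nabla_{\tilde g}f\) avec \(f=-\log\norm{p}^2\), on a \(\tilde{\W}(\log\norm{p}^2)=-\norm{\tilde{\W}}_{\tilde g}^2<0\) hors de \(\tilde{B}\). Le reste demande un calcul explicite de \(\tilde{\W}(\log\norm{V}^2)\) et de \(\tilde{\W}(|R\cdot V|^2)\) à partir de l'expression \(\tilde{\W}=\Re(\tilde\rho V)\) du lemme \ref{l: expression de Wtilde}, en exploitant de manière essentielle l'hypothèse \(\text{div}\,V=0\) et la stricte pluri-sur-harmonicité de \(f\). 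C'est là que réside toute la difficulté analytique ; une fois la positivité ponctuelle acquise, la minoration uniforme sur la partie régulière loin de \(\sing\) s'obtient par compacité.

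Il reste enfin à traiter le voisinage de l'ensemble singulier, où \(\norm{R\times V}^2\) s'annule puisque \(V\) y est radial et où la formule précédente dégénère. J'y procéderais grâce à la propriété \(\Qr\) et à la linéarisation de Poincaré du lemme \ref{l: construction de US} : dans des coordonnées où le champ définissant \(\FF\) s'écrit \(Y=\lambda u\,\frac{\partial}{\partial u}+\mu v\,\frac{\partial}{\partial v}\) avec \(\Re\lambda,\Re\mu>0\), le taux de dilatation transverse de \(\W=\Re(Y)+\cdots\) se calcule explicitement et se prolonge continûment en une valeur strictement positive, gouvernée par les parties réelles de \(\lambda\) et \(\mu\). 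Quitte à recoller la métrique explicite ci-dessus avec un modèle hermitien au voisinage de chaque \(s\in\sing\), on obtient une métrique \(g_N\) globale et, par compacité de \(\Ptwo\setminus\text{Int}(U_B)\), une constante \(b>0\) uniforme, ce qui établit \eqref{eq: decroissance}.
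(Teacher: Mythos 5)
Your skeleton --- identify the action of \(D\Phi_\W^t\) on \(\NRF\) with the parallel transport of the Bott connection, carry it over to \(\mathcal{O}(d+2)\) through the isomorphism of the lemme \ref{l: calcul de la connexion de Bott}, and measure the expansion of a metric built from \(\varphi_p=\det(R(p),V(p),\cdot)\) --- is exactly the paper's. But two things go wrong. First, your metric is not well defined: you set \(g_{N^*}(\varphi_p):=\norm{R(p)\times V(p)}^2\), and since \(\varphi_{\lambda p}=\lambda^{d+2}\varphi_p\) (as elements of the same fibre \(N^*_{[p]}\FF\)), consistency would force the right-hand side to scale as \(|\lambda|^{2(d+2)}\); but \(R(\lambda p)\times V(\lambda p)=\lambda^{d+1}\,\bigl(R(p)\times V(p)\bigr)\), so it scales as \(|\lambda|^{2(d+1)}\). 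Contrary to what you assert, homogeneity does \emph{not} make this descend to \(\Ptwo\); the mismatch is exactly one factor \(\norm{p}^2\). The paper instead takes the metric induced on \(\mathcal{O}(d+2)\) by the standard Hermitian metric of \(\C^3\) on \(\mathcal{O}(-1)\), i.e.\ \(g_N([u])^{1/2}=|\varphi_p(u)|/\norm{p}^{d+2}\), which has the correct homogeneity.

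Second, and more seriously, the quantitative core of the corollary is absent from your proposal: you reduce everything to the strict positivity (with a uniform lower bound) of \(r=-\tilde{\W}\bigl(\log\norm{R\times V}^2\bigr)\) and then explicitly defer that computation (\og c'est là que réside toute la difficulté analytique \fg), adding on top an unproved gluing argument near \(\sing\), where your quantity \(\norm{R\times V}\) degenerates. The whole point of the paper's choice of metric is that no such computation is needed: the flat sections of \(\nabla_V\) on \(\mathcal{O}(-1)\) along the \(\W\)-orbits are the integral curves of \(\tilde{\W}\), so the logarithmic expansion rate of \(g_N\) under \(D\Phi_\W^t\) is \((d+2)\) times the growth rate of \(f=-\log\norm{\cdot}^2\) along \(\tilde{\W}\), and the lemme \ref{l: croissance lineaire} already bounds that rate below by a uniform \(a>0\) on all of \(\Pi^{-1}(\Ptwo\setminus\text{Int}(U_B))\), including over \(\sing\) (where \(\tilde{\W}\) does not vanish). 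One gets \(g_N([D\Phi_\W^t(v)])\geq\exp\bigl((d+2)at\bigr)\,g_N([v])\) directly, with no pointwise positivity question, no compactness argument, and no separate treatment of the singularities. As written, your argument trades an estimate the paper has already established for one that remains open --- and it is not even clear that your \(r\) is positive everywhere outside \(U_B\).
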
 

\begin{proof}[Démonstration]
On définit la métrique \(g_N\) comme étant l'image de la métrique sur \(\mathcal{O}(d+2) \) induite par la métrique hermitienne standard sur \(\C^3\) par la composition de l'isomorphisme \(\mathcal{O}(d+2) \rightarrow \NF\) défini par \eqref{eq: application homogene} et de l'isomorphisme naturel  \((\NF)_{| \Ptwo\setminus \sing}  \rightarrow \NRF \).  Les sections plates de la connexion induite par \(V\) sur \(\mathcal{O}(-1)\) le long des courbes intégrales de \(W\) sont les courbes intégrales de \(\tilde{W}\). Or le lemme \ref{l: croissance lineaire} montre que 
\begin{equation} \label{eq: croissance} \norm{ \Phi _{\tilde{W} } ^t (p) } ^2  \leq \exp (-a t) \norm{p}^2 \end{equation}
Le corollaire s'ensuit en vertu du lemme \ref{l: calcul de la connexion de Bott}. 
\end{proof}

%%%%%%%%%%%%%%%%%%%%%%%%%%%%%%%%%%%%%%%%%%%%%%%%%%%%%%%%%%%%%%%%%%

\section{La décomposition de Fatou/Julia}\label{ss: decomposition Fatou Julia}
 
Dans~\cite{GGMS}, Ghys, Gomez-Mont et Saludes associent à un feuilletage \(\FF\) de \(\Ptwo\) ayant des singularités hyperboliques une partition de \( \Ptwo\) en un ensemble de Fatou \( \Fatou _{\text{GGS}} (\FF) \) et un ensemble de Julia \( \Julia _{\text{GGS}}(\FF )\). Ils donnent également une classification des composantes de \(\Fatou _{\text{GGS}} (\FF)\) en trois familles, l'une d'elles correspondant aux composantes errantes.
Un point de vue alternatif a ensuite été développé par Asuke \cite{asuke} qui associe une décomposition Fatou/Julia à un pseudo-groupe \(\Gamma\) de transformations holomorphes agissant sur une surface de Riemann~\(T\) ayant la propriété de génération compacte.

Rappelons que la propriété de génération compacte, introduite par Haefliger \cite{Haefliger},  stipule qu'il existe un ouvert relativement compact \( T'\subset T\) qui intersecte toutes les orbites de \(\Gamma\) et que, de plus, la restriction \(\Gamma_{|T'}\) de \(\Gamma\) à \(T'\) est engendrée par un nombre fini d'éléments \( \gamma_k \in \Gamma_{|T'}\), chacun se prolongeant en un élément \( \widetilde{\gamma_k}\in \Gamma\) dont le domaine de définition dans \( T\) contient l'adhérence du domaine de définition de \(\gamma_k\) dans \( T'\). Le pseudo-groupe \(\Gamma'= \Gamma_{|T'}\) est appelé une réduction du pseudo-groupe \( \Gamma\). Un ouvert \(U\subset T'\) est un ouvert de Fatou si tout germe de \(\Gamma'\) en un point de \(U\) se prolonge en un élément de \(\Gamma'\) défini sur \(U\).  L'ensemble de Fatou est l'orbite par \(\Gamma\) de l'union des ouverts de Fatou contenu dans une réduction \(\Gamma'\) de \(\Gamma\) et il est indépendant de la réduction choisie. L'ensemble de Julia est son complémentaire.

Le pseudo-groupe d'holonomie d'un feuilletage holomorphe sur une surface complexe compacte, dont les singularités sont hyperboliques, est de génération compacte (\cite{GGMS}). L'ensemble de Fatou \(\Fatou_{\text{A}}(\FF)\) d'un tel feuilletage \(\FF\) au sens d'Asuke est l'union des feuilles correspondant aux points de l'ensemble de Fatou du pseudo-groupe d'holonomie du feuilletage dans sa partie régulière; l'ensemble de Julia \(\Julia_{\text{A}}(\FF) \) de \(\FF\) au sens d'Asuke est son complémentaire. Notons que, d'après \cite[Proposition 4.2]{asuke}, nous avons 
\[\Fatou _{\text{GGS}} (\FF) \subset  \Fatou _{\text{A}} (\FF) .\]

Il nous sera utile dans ce qui suit de conna\^{\i}tre le critère suivant pour qu'un point appartienne à l'ensemble de Julia au sens d'Asuke d'un feuilletage holomorphe \(\FF\) sur une surface complexe compacte avec des singularités hyperboliques : étant donné une métrique hermitienne sur le fibré normal au feuilletage et un compact contenu dans la partie régulière, regardons l'ensemble des applications d'holonomie d'un germe de transversale en notre point vers une transversale en un point de ce compact. Si les dérivées de ces applications d'holonomie en notre point forment un ensemble non borné, alors notre point appartient à l'ensemble de Julia. 

\begin{proposition}\label{p: decomposition Fatou Julia}
Soit \( \FF\) un feuilletage holomorphe sur \(\Ptwo\) satisfaisant les propriétés \(\Pr\) et \(\Qr\). Alors les ensembles de Fatou au sens de Ghys-Gomez-Mont-Saludes et d'Asuke sont tous les deux égaux au domaine errant \(D\) construit au théorème~\ref{t: existence domaine errant}. 
\end{proposition}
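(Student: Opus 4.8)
The plan is to establish the two inclusions \(D \subset \Fatou_{\text{GGS}}(\FF)\) and \(\Fatou_{\text{A}}(\FF) \subset D\). Combined with the inclusion \(\Fatou_{\text{GGS}}(\FF) \subset \Fatou_{\text{A}}(\FF)\) recalled above, these yield the chain \(D \subset \Fatou_{\text{GGS}}(\FF) \subset \Fatou_{\text{A}}(\FF) \subset D\), so that all three sets coincide. Since the Fatou sets of both theories are unions of leaves contained in \(\reg(\FF)\), and \(D\) is the saturation of \(B\), the whole argument takes place at the level of regular leaves, and the finitely many singular points need not be treated separately.

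First I would prove \(D \subset \Fatou_{\text{GGS}}(\FF)\). By Theorem~\ref{t: existence domaine errant}, the set \(D\) is an open saturated set on which \(\FF\) is a locally trivial fibration in disks over \(B\), with simply connected leaves \(\FF(b) = \Att_\W(b)\). The holonomy pseudo-group restricted to a transversal sitting inside \(D\) is therefore tame: each leaf being a disk it carries trivial holonomy, and the local product structure of the fibration makes the pseudo-group equicontinuous. This is exactly the behaviour placing a saturated open set in the wandering family of the Ghys--Gomez-Mont--Saludes classification, so that \(D\) is a union of wandering Fatou components.

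The heart of the matter is the reverse inclusion \(\Fatou_{\text{A}}(\FF) \subset D\), which I would obtain by showing that every leaf not meeting \(B\) lies in \(\Julia_{\text{A}}(\FF)\), via the Julia criterion recalled above. Let \(q \in \Ptwostar\) be a regular point outside \(D\). Since entering \(U_B\) forces convergence to \(B\) (Lemme~\ref{l: construction de UB}), the forward orbit \(\Phi_\W^t(q)\) never enters \(\text{Int}(U_B)\); and since the singularities are sources with outward-pointing flow on each \(\partial U_s\) (Lemme~\ref{l: construction de US}), the orbit leaves \(U_S\) in finite time and cannot re-enter it. Hence for \(t\) large the orbit lies in the fixed compact \(C := \Ptwo \setminus \text{Int}(U_B \cup U_S)\), contained in the regular part. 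As the orbit stays outside \(\text{Int}(U_B)\) for all \(t \geq 0\), Corollaire~\ref{c: decroissance exponentielle des derivees transverses} applies and gives \(g_N([D\Phi_\W^t(v)]) \geq \exp(bt)\, g_N([v])\) for every normal vector \([v]\). Since \(\Phi_\W^t\) is tangent to \(\FF\), the map \(D\Phi_\W^t\) on the normal bundle is precisely the derivative of the holonomy germ from a transversal at \(q\) to a transversal at \(\Phi_\W^t(q) \in C\); these derivatives are therefore unbounded, and the Julia criterion yields \(q \in \Julia_{\text{A}}(\FF)\).

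I expect the main obstacle to be matching the structural description of \(D\) used in the first inclusion with the precise Fatou definition of Ghys--Gomez-Mont--Saludes, in particular confirming that the disk-fibration structure places \(D\) in their wandering family and not merely in Asuke's Fatou set. The second inclusion is more mechanical once one checks that the forward orbit of a point outside \(D\) is eventually trapped in a fixed compact of the regular part, which is exactly where the source nature of the singularities and the outward flow on \(\partial U_S\) enter, together with the transverse dilation of Corollaire~\ref{c: decroissance exponentielle des derivees transverses}.
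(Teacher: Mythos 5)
Your proposal is correct and follows essentially the same route as the paper: both reduce, via the chain \(D \subset \Fatou_{\text{GGS}}(\FF) \subset \Fatou_{\text{A}}(\FF)\), to showing that every regular point outside \(D\) lies in \(\Julia_{\text{A}}(\FF)\), by observing that its forward \(\W\)-orbit stays away from \(B\) and \(\sing\), then invoking the exponential transverse expansion of Corollaire~\ref{c: decroissance exponentielle des derivees transverses} and Asuke's Julia criterion. The only difference is that you spell out a justification for \(D \subset \Fatou_{\text{GGS}}(\FF)\) (disk fibration over \(B\), trivial holonomy, wandering family), an inclusion the paper simply asserts.
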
 

\begin{proof}
Comme  $D\subset \Fatou_{\text{GGS}} (\FF) \subset \Fatou_{\text{A}}(\FF)$, il suffit donc de montrer que le complémentaire de \(D\) est contenu dans \(\Julia _{\text{A}} (\FF)\). Soit \( p \in \Ptwo \setminus D\). Si ce point est singulier, il appartient à  \(\Julia_{\text{GGS}}(\FF) \) et à  \(\Julia_{\text{A}}(\FF) \) par définition. S'il est régulier, son orbite positive \( \{ \Phi_{\W}  ^{ t}(p) \} _{t\geq 0} \) par le flot induit par le champ \( \W \)  ne peut s'approcher ni de \( B \) (car sinon \(p\) appartiendrait au domaine errant), ni de \( \sing\) (car ces dernières sont des sources pour \( \W \)). Ainsi, d'après le corollaire \ref{c: decroissance exponentielle des derivees transverses}, le logarithme de la dérivée de l'holonomie du feuilletage \( \FF\) en \(p\) le long du chemin  \( \{ \Phi_{\W}  ^{t}(p) \} _{0\leq t \leq T} \) cro\^{\i}t linéairement et on en déduit la propriété suivante : l'ensemble des dérivées en \(p\) des applications appartenant au pseudo-groupe d'holonomie de la restriction du feuilletage au complémentaire d'un certain voisinage de l'ensemble singulier de \( \FF \) est non borné. Ainsi \(p\) appartient à \(\Julia _{\text{A}} (\FF)\).  \end{proof}

%%%%%%%%%%%%%%%%%%%%%%%%%%%%%%%%%%%%%%%%%%%%%%%%%%%%%%%%%%%%%%%

\section{Hyperbolicité de \(W\) : partie II}

Dans cette partie, nous poursuivons l'étude d'un feuilletage algébrique complexe \(\FF\) de \(\Ptwo\) satisfaisant les propriétés \(\Pr\) et \(\Qr\); en particulier, nous construisons les feuilletages stables et instables, faibles et forts, du champ \(\W\) dans le complémentaire~\(\Ptwostar\) de l'ensemble \(B \cup \sing\). Nous introduisons aussi l'ensemble hyperbolique \(K\) dont l'ensemble d'attraction est l'ensemble de Julia privé de~\(S\).

\subsection{Une métrique riemannienne sur \(\Ptwostar\) adaptée à \(W\)}\label{s: metrique adaptee}

Soit \(U_B\) le voisinage de \(B\)  construit au lemme \ref{l: construction de UB}. à partir d'une métrique riemannienne \( h_{\partial U_B} \) sur \(\partial U_B\), nous construisons une métrique riemannienne \(h\) sur \( U_B\setminus B\) en posant 
 \begin{equation}\label{eq: metrique sur UB}   h = dt^2 + e^{-2 t } h_{\partial U_B, T^{\R} \FF} + e^{2t} h_{\partial U_B, T^{\R}\FF}^\perp ,\end{equation}
 où  les métriques \( h_{\partial U_B, T^{\R}\FF}\) et \( h_{\partial U_B, T^{\R}\FF}^\perp\) ont respectivement pour noyau \( (T^{\R}\FF\cap T\partial U_B)^\perp\) et \( T^{\R}\FF\cap T\partial U_B\), et vérifient \( h_{\partial U_B} = h_{\partial U_B, T^{\R}\FF} +h_{\partial U_B, T^{\R}\FF}^\perp\). Dans l'équation \eqref{eq: metrique sur UB} la coordonnée \(t \) a été définie au lemme \ref{l: construction de UB} et prend ses valeurs dans \([0,+\infty [\).  
 
Soit \(U_S\) le voisinage de \( \sing\) construit au lemme \ref{l: construction de US}.
On définit une métrique~\(h\) dans \(U_S\setminus \sing\) en partant d'une métrique lisse \( h_{\partial U_S}\) sur \(\partial U_S\) pour laquelle \( T^{\R}\FF\cap T\partial U_S\) est orthogonal au feuilletage  \(\mathcal R\) (lemme \ref{l: construction de US}) et on pose 
\begin{equation} \label{eq: metrique sur US}  h= dt^2 + e^{-2t } h_{\partial U _S, T^{\R}\FF} + e^{2t} h_{\partial U_S, T^{\R}\FF}^\perp ,\end{equation}
où les métriques \( h_{\partial U_S, T^{\R}\FF}\) et \( h_{\partial U_S, T^{\R}\FF}^\perp\) ont respectivement pour noyau \( (T^{\R}\FF\cap T\partial U_S)^\perp= T\mathcal R\) et \( T^{\R}\FF\cap T\partial U_S\), et vérifient \( h_{\partial U_S} = h_{\partial U_S, T^{\R}\FF} +h_{\partial U_S, T^{\R}\FF}^\perp\). Dans l'équation \eqref{eq: metrique sur US} la coordonnée \(t \) a été définie au lemme \ref{l: construction de US} et prend ses valeurs dans \(]-\infty , 0] \).

Pour terminer, nous étendons la métrique \(h \) en une métrique riemannienne lisse sur \( \Ptwostar\) de fa\c{c}on arbitraire et nous notons \( \hWF \) et \( h_N \) les métriques induites par \(h\) sur les fibrés \( \WF \) et \( N_{\FF}^\R \) respectivement. 

\begin{lemme} \label{l: hyperbolicite}
Pour tout \( t\in \R\), on a 
\[ \sup _{p\in X,\ t\in [-1,1]} \norm{D\Phi _{\W} ^ t(p)} <+\infty .\]
D'autre part, il existe des constantes \( a,b,c,d  >0\) telles que, pour tout \( t\in \R\) et pour tout \( v\in  \WF\), 
\[            c\exp (-a t ) \hWF  ( v)  \leq \hWF  ( D\Phi_W^t (v) ) \leq d \exp (-b t)  \hWF  ( v),\]
et, pour tout \( w\in N^{\R}_{\FF} \), 
\[    c \exp ( b t ) h_{\FF} (w) \leq h_{N} ( D\Phi_W^t (w) ) \leq d \exp (a t) h_{\FF} (w) .\]
\end{lemme} 

\begin{proof} [Démonstration]
La proposition est satisfaite pour une trajectoire qui reste dans un des voisinages \( U_B\setminus B \) ou \( U_S \setminus \sing\) (avec constantes \( c,d=1\) et \(a = b = 2\) par construction de la métrique \( g\)). Elle est également satisfaite pour une trajectoire restant dans le compact \( \Ptwo\setminus (U_B\cup U_S )\), d'après le lemme \ref{l: expansion} et le corollaire \ref{c: decroissance exponentielle des derivees transverses}. Comme le long d'une trajectoire quelconque, il ne peut y avoir que deux transitions entre ces deux régimes, le résultat en découle.
\end{proof}

\subsection{Un lemme classique} 
Le résultat suivant est bien connu mais, n'ayant pas trouvé l'énoncé sous cette forme dans la littérature, nous le redémontrons. 

\begin{lemme} \label{l: construction de distributions}
Soit \(\mathcal E \rightarrow X\) un fibré vectoriel réel de dimension finie au-dessus d'un espace topologique \(X\), muni 
\begin{itemize} 
\item d'un flot continu d'automorphismes \( \hat{\Psi}=\{\hat{\Psi}_t \}_{t\in \R} \) induisant un flot \( \Psi= \{ \Psi _t \}_{t\in \R} \) agissant sur \(X\)
\item et d'une métrique continue et définie positive \( |\cdot| \). 
\end{itemize} 
Supposons qu'il existe un sous-fibré continu \(\hat{\Psi} \)-invariant \( E\subset \mathcal E\) et écrivons  
\begin{equation} \hat{\Psi}_t = \left( \begin{array}{cc} \alpha_t & \gamma_t\\  0 & \delta_t  \end{array}\right) \end{equation} 
relativement à la décomposition \(\mathcal E = E\oplus E^\perp\).  Enfin, supposons que  
\begin{equation} \label{eq: sup} \sup _{p\in X,\ t\in [-1,1]} \norm{\hat{\Psi}_t(p)} <+\infty ,\end{equation} 
et qu'il existe des constantes \(a , c >0\) telles que, pour tout \(p \in X\) et tout \(t\in \R\), on ait 
\begin{equation} \label{eq: majoration}  \norm{\alpha_t(p)^{-1} } \leq c  \text{ et } \norm{\delta_t(p)} \leq c\exp (-a t),\end{equation} 
pour la norme d'opérateur associée à  \(|\cdot |\). Alors, l'ensemble \(F\) formé des éléments \( f \in \mathcal E \) tels que 
\begin{equation} \label{eq: stabilite}  \lim_{t\rightarrow +\infty}  \norm{\hat{\Psi}_t (f) } =0 \end{equation} 
est un sous-fibré vectoriel continu de \(\mathcal E\) tel que 
\begin{itemize} 
\item  \(\mathcal E= E\oplus F \) et l'angle entre \(E\) et \(F\) est uniformément minoré, 
\item \(F\) est \(\hat{\Psi}\)-invariant,
\item  et la quantité \(\sup _{p\in X} \norm{\left( \hat{\Psi}_t(p) \right)_{|F}} \) tend vers \(0\) exponentiellement vite lorsque \(t\) tend vers \(+\infty\).
\end{itemize} 
\end{lemme}

\begin{proof}[Démonstration] 
%Nous noterons par la suite \(\hat{\Psi}= \hat{\Psi}_1\), et  \[\left( \begin{array}{cc} a & c\\  0 & d  \end{array}\right)=\left( \begin{array}{cc} a_1 & c_1\\  0 & d_1  \end{array}\right).\] 
Choisissons \( f\in F\) au-dessus d'un point \(p_0\) de \(X\) et, pour tout \(t\in \R\), notons \(p_t = \Psi (p_0)\), \(f_t = \hat{\Psi}_t (f) \), et décomposons \( f_t= e_t+e^\perp_t\) avec \( e_t\in E\) et \(e^\perp_t \in E^\perp\). On a alors
\begin{equation}\label{eq: caracterisation F 1} e_0 = - \alpha_t (p_0) ^{-1} \gamma_t (p_0) e_0^\perp + \alpha_t(p_0)^{-1} e_t .\end{equation}
Le second membre du terme de droite tend vers \(0\) lorsque \(t\) tend vers \(+\infty\).  Étudions plus en détail le premier membre. %On a en vertu de la relation de cocycle \(\hat{\Psi}_t (p_0)= \hat{\Psi}_{t-1}(p_1) \circ \hat{\Psi}_1(p_0)\),  
%\begin{equation}\label{eq: caracterisation F 2}  c_t (p_0) = a_{t-1} (p_1) c_1(p_0) + c_{t-1} (p_1) d_1 (p_0) . \end{equation}
En posant 
\begin{equation} \label{eq: definition ut} u_t  (p_0) := \alpha_t (p_0)^{-1} \gamma_t(p_0)\in \text{Hom} (E_{p_0} ^\perp, E_{p_0} ) , \end{equation}
la relation de cocycle \(\hat{\Psi}_t (p_s)= \hat{\Psi}_{t-1}(p_{s+1}) \circ \hat{\Psi}_1(p_s)\)  nous donne
\begin{equation}\label{eq: caracterisation F 3} u_t(p_0) = u_1(p_0) + \alpha_1(p_0)^{-1} u_1(p_1) \delta_1(p_0) + \ldots + \alpha_{t-1} (p_0)^{-1} u_1(p_{t-1}) \delta_{t-1}(p_0) . \end{equation}
En utilisant \eqref{eq: sup} et \eqref{eq: majoration}, on obtient que le terme de droite de cette expression est la somme partielle d'une série normalement convergente, ce qui montre que \(u_t(p_0) \) admet une limite \( u_{\infty} (p_0)\) lorsque \(t\) tend vers \(+\infty\), qui dépend contin\^ument de \(p_0\). Comme d'après \eqref{eq: caracterisation F 1}, on a \( e_0 = - u_t (p_0) e_0^\perp + \alpha_t(p_0 )^{-1} e_t\), on obtient l'expression 
\begin{equation}\label{eq: caracterisation F 4} e_0 = - u_{\infty} (p_0) e_0^\perp .\end{equation}
Réciproquement, supposons que \(f = e_0 + e_0^\perp \), où \(e_0\) et \(e_0^\perp\) sont des éléments de \(E\) et \(E^\perp\) respectivement, qui vérifient \eqref{eq: caracterisation F 4}. On a alors en manipulant \eqref{eq: caracterisation F 1}
\begin{equation} \label{eq: caracterisation F 5}  e_t = \alpha_t(p_0) \left(e_0 + \alpha_t(p_0)^{-1} \gamma_t(p_0) e_0^\perp \right) = \alpha_t(p_0) \left( u_t(p_0) - u_\infty (p_0) \right) e_0^\perp,\end{equation}
Or 
\begin{equation} \label{eq: caracterisation F 6} \alpha_t(p_0) \left( u_\infty (p_0) -u_t(p_0)  \right) =  \left( \sum_{k\geq 0} \alpha_k(p_t) ^{-1} u(p_{t+k})\delta_k (p_t) \right) \delta_t(p_0), \end{equation}
donc d'après \eqref{eq: caracterisation F 5} et \eqref{eq: caracterisation F 6}, on obtient 
\[ \norm{e_t } \leq  c' \exp (-a t) \norm{e_0^\perp} \] 
pour une constante \(c'\) indépendante de \(p_0\), \(t\) ou encore  \(f\).  On a aussi  
\[ \norm{e_t^\perp } =\norm{ \delta_t(p_0)  e_0^\perp } \leq c \exp (-a t) \norm{ e_0^\perp }, \]
ce qui montre bien que \( f_t = e_t + e_t ^\perp \) converge vers \(0\) (et de plus exponentiellement vite). On a donc bien montré que \(F\) est le graphe du morphisme continu de sous-fibrés de \(\mathcal E\) 
\[  e^\perp \in E_p ^\perp \mapsto - u_{\infty} (p) e^\perp \in E_p,\]
ce qui achève la démonstration du lemme.
\end{proof}

\subsection{Les distributions stables et instables du flot \(\W\)} \label{ss: distribution stables}

%Soit \( T\FSFo _\W\subset  \TRF _{|\Ptwostar} \) (resp. \(T\FIFo_W\subset \TRF _{|\Ptwostar} \) ) l'ensemble des vecteurs tangents \( v\in  \TRF _{|\Ptwostar}\) tels que \( h( D\phi_W^t (v) ) \) tend vers \(0\) lorsque \(t\) tend vers \(+\infty\)  (resp.  \(-\infty\)), où \(h\) est la métrique riemannienne sur \(\Ptwostar\) construite au paragraphe \ref{s: metrique adaptee}.

\begin{proposition} \label{p: distributions stables et instables}
Il existe une décomposition \(D\Phi\)-invariante  et continue  
\begin{equation} \label{eq: decomposition} T^\R \Ptwostar  = T\FSFo _\W \oplus \R \W \oplus  T\FIFo_W \end{equation}
où \(\TRF_{|\Ptwostar} = T\FSFo _\W \oplus \R \W\). Les angles entre les facteurs de la décomposition \eqref{eq: decomposition} sont uniformément minorés pour la métrique \(h\). De plus, il existe des constantes \(a,b,c,d\) strictement positives telles que pour tout \(v\in T\FSFo _\W\)
\begin{equation} \label{eq: estimation distribution stable forte}            c\exp (-a t ) h  ( v)  \leq h  ( D\Phi_W^t (v) ) \leq d \exp (-b t)  h ( v),\end{equation} 
et, pour tout \( w\in T\FIFo_W \), 
\begin{equation} \label{eq: estimation distribution instable forte}    c \exp ( b t ) h (w) \leq h ( D\Phi_W^t (w) ) \leq d \exp (a t) h (w) .\end{equation}
\end{proposition}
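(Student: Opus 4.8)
Le plan est d'appliquer deux fois le lemme~\ref{l: construction de distributions}, dont les hypothèses de norme seront dans chaque cas fournies par le lemme~\ref{l: hyperbolicite} (les estimations n'étant utilisées que pour les temps $t\geq 0$). On construira d'abord la distribution stable forte $T\FSFo_\W$ à l'intérieur de $\TRF$, en prenant pour sous-fibré invariant la droite $\R\W$ ; puis on construira la distribution instable forte $T\FIFo_W$, transverse à $\TRF$, en appliquant le même lemme au fibré ambiant $T^\R\Ptwostar$ muni du sous-fibré invariant $\TRF$, mais pour le flot renversé $D\Phi_\W^{-t}$.

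Pour obtenir $T\FSFo_\W$, on appliquera le lemme~\ref{l: construction de distributions} à $\mathcal E = \TRF_{|\Ptwostar}$, au flot $\hat\Psi_t = D\Phi_\W^t$, à la métrique $h$ et au sous-fibré $E = \R\W$, qui est continu, non nul sur $\Ptwostar$ et invariant puisque $D\Phi_\W^t(\W) = \W$. L'hypothèse \eqref{eq: sup} est précisément la première estimation du lemme~\ref{l: hyperbolicite}. Relativement à $\TRF = \R\W\oplus(\R\W)^\perp$, l'application $\alpha_t$ envoie $\W(p)$ sur $\W(\Phi_\W^t(p))$ ; comme $\norm{\W}_h$ vaut identiquement $1$ sur $U_B\setminus B$ et $U_S\setminus\sing$ (où $\W=\partial/\partial t$, par construction de $h$) et est minorée et majorée par continuité sur le compact restant, on aura $\norm{\alpha_t^{-1}}\leq c$. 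Enfin $(\R\W)^\perp$ muni de $h$ s'identifie isométriquement au quotient $\WF$ muni de $\hWF$, et sous cette identification $\delta_t$ est exactement l'action induite par $D\Phi_\W^t$ sur $\WF$ ; la seconde estimation du lemme~\ref{l: hyperbolicite} donnera $\norm{\delta_t}\leq d\exp(-bt)$. On en déduira un sous-fibré continu invariant $F =: T\FSFo_\W$ avec $\TRF = \R\W\oplus T\FSFo_\W$, d'angle uniformément minoré avec $\R\W$ et contracté exponentiellement.

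Pour obtenir $T\FIFo_W$, on appliquera le lemme~\ref{l: construction de distributions} à $\mathcal E = T^\R\Ptwostar$, au sous-fibré invariant $E = \TRF$ (invariant car $\W$ est tangent à $\FF$) et au flot renversé $\hat\Psi_t = D\Phi_\W^{-t}$. Dans cette situation $\alpha_t^{-1}$ coïncide avec la restriction de $D\Phi_\W^{t}$ à $\TRF = \R\W\oplus T\FSFo_\W$, uniformément bornée pour $t\geq 0$ (norme contrôlée sur $\R\W$ et contractée sur $T\FSFo_\W$), d'où $\norm{\alpha_t^{-1}}\leq c$ ; quant à $\delta_t$, il s'identifie à l'action du flot renversé sur $\NRF$, strictement contractante pour ce flot d'après le corollaire~\ref{c: decroissance exponentielle des derivees transverses} (reformulé par le lemme~\ref{l: hyperbolicite}), d'où $\norm{\delta_t}\leq d\exp(-bt)$. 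On obtiendra ainsi un sous-fibré continu invariant $F' =: T\FIFo_W$ avec $T^\R\Ptwostar = \TRF\oplus T\FIFo_W$, sur lequel $D\Phi_\W^{-t}$ contracte exponentiellement, c'est-à-dire qui est dilaté dans le futur.

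Il restera à recoller les morceaux. Comme $\TRF = T\FSFo_\W\oplus\R\W$ et $T^\R\Ptwostar = \TRF\oplus T\FIFo_W$, on obtiendra la décomposition~\eqref{eq: decomposition}, les angles entre les trois facteurs étant uniformément minorés puisque chacune des deux applications du lemme~\ref{l: construction de distributions} en fournit la minoration correspondante. Les majorations dans \eqref{eq: estimation distribution stable forte} et \eqref{eq: estimation distribution instable forte} résulteront directement de la dernière conclusion du lemme~\ref{l: construction de distributions}. Pour les minorations, on remarquera que, via le graphe uniformément borné du morphisme $-u_\infty$ décrit dans la preuve de ce lemme, la restriction de $D\Phi_\W^t$ à $T\FSFo_\W$ (resp. à $T\FIFo_W$) est conjuguée à l'action de $D\Phi_\W^t$ sur $\WF$ (resp. sur $\NRF$), pour laquelle le lemme~\ref{l: hyperbolicite} fournit des estimations \emph{bilatérales}. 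C'est précisément cette identification fine des termes abstraits $\alpha_t$ et $\delta_t$ avec la dynamique des fibrés quotients $\WF$ et $\NRF$ qui constituera le principal point délicat, car c'est elle qui procure les minorations exponentielles que le lemme~\ref{l: construction de distributions} ne donne pas directement du côté contracté.
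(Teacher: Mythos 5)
Your proposal is correct and follows essentially the same route as the paper: two applications of Lemma~\ref{l: construction de distributions} fed by the estimates of Lemma~\ref{l: hyperbolicite}, first with \((\mathcal E, E)=(\TRF, \R\W)\) for the flow \(D\Phi_\W\), then with \((\mathcal E, E)=(T^\R\Ptwostar, \TRF)\) for the reversed flow \(D\Phi_\W^{-t}\). The extra details you supply — the bound on \(\alpha_t^{-1}\) via \(\norm{\W}_h\), the isometric identification of \(\delta_t\) with the quotient actions on \(\WF\) and \(\NRF\) (which yields the two-sided estimates), and the observation that the hypotheses of Lemma~\ref{l: construction de distributions} are only needed for \(t\geq 0\) (relevant since, for the reversed flow, \(\norm{\alpha_t^{-1}}\) is unbounded for \(t<0\)) — are points the paper leaves implicit, and they are all accurate.
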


\begin{proof}
On applique le lemme \ref{l: hyperbolicite} (première estimée) ainsi que le lemme \ref{l: construction de distributions} au fibré \(\mathcal E = \TRF\), au flot \( \hat{\Psi}:= D\Phi_\W  \) et au sous-fibré \( E = \R W\). On obtient l'existence d'un sous-fibré continu \( T\FSFo_\W \subset \TRF\) de dimension réelle \( 1\), qui est uniformément exponentiellement contracté par le flot \( D\Phi_\W \).

On applique maintenant le lemme \ref{l: hyperbolicite} (deuxième estimée) ainsi que le lemme \ref{l: construction de distributions} au fibré \(\mathcal E = T^{\R} \Ptwo\), au flot \(\hat{\Psi}\) défini par \( \hat{\Psi}_t := D\Phi^{-t} _W \) pour tout \(t\in \R\) et au sous-fibré \( E = \TRF\), pour obtenir l'existence d'un sous-fibré continu  \( T\FIFo_W \subset T^{\R} \Ptwo\) de dimension réelle \( 2\), transverse à \(\TRF\), qui est uniformément exponentiellement contracté par le flot \(\hat{\Psi} \).
\end{proof}

\begin{remarque} 
Par construction de la métrique  \(h\) au voisinage des singularités de \(\FF\), la distribution \( T\FSFo _\W  \) est l'intersection \( \TRF \cap T\partial U_B\) en restriction à \(\partial U_B\). De même  la distribution  \(T\FIFo_W \) est le fibré tangent du feuilletage de Reeb \(\mathcal R\) en restriction à \( U_S\) .
\end{remarque}

\subsection{Ensemble hyperbolique maximal} \label{ss: ensemble hyperbolique}

Pour toute singularité \(s \in \sing\), on note \( \Rep_W (s) \) le bassin de répulsion de \(s\), à savoir l'ensemble des points dont la trajectoire par le flot \( \Phi_{\W} \)  dans le passé converge vers \(s\). Il s'agit d'un ouvert de \(\Ptwo\), puisque chaque singularité de \(\FF\) est une source (propriété \(\Qr\)). On notera \(\Rep_W (\sing)\) l'union des bassins de répulsion des singularités de \(\FF\).  

Rappelons que l'ensemble errant \( D\), l'ensemble de Fatou \(\Fatou (\FF)\) et l'ensemble d'attraction \(\text{Att}_W  (B)\) de \(B\) sont tous les trois égaux (théorème \ref{t: existence domaine errant} et proposition \ref{p: decomposition Fatou Julia}). Introduisons l'ensemble 
\begin{equation}\label{eq: ensemble hyperbolique} K:= \Ptwo\setminus \left( \text{Att}_W  (B) \cup \Rep _W (\sing)  \right). \end{equation} 
Il s'agit d'un compact de \(\Ptwostar\) qui est invariant par le flot \(\Phi_{\W}\).  On remarque que l'ensemble de \(W\)-attraction de \(K\) 
défini par 
\begin{equation} \label{eq: W repulsion de K} \Att_{\W} (K):= \{ p\in \Ptwo \ |\ \lim_{t\rightarrow +\infty} d(\Phi_{\W}^t (p), K) = 0\}\end{equation}
 est l'ensemble de Julia de \(\mathcal F\) privé des singularités et que l'ensemble de \(W\)-répulsion de \(K\) défini par 
 \begin{equation} \label{eq: W attraction de K} \Rep_\W (K) :=  \{ p\in \Ptwo \ |\ \lim_{t\rightarrow -\infty} d(\Phi_{\W}^t (p), K) = 0\} \end{equation}
 est le complémentaire dans \(\Ptwostar\) de l'ensemble de répulsion de \(\sing\).

\begin{proposition} \label{p: K est hyperbolique} L'ensemble \(K\) est un compact \(\Phi_\W\)-invariant  hyperbolique maximal. \end{proposition}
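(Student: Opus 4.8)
Le caractère compact et $\Phi_\W$-invariant de $K$ ayant déjà été observé avant l'énoncé, le plan est d'établir les deux propriétés restantes : l'hyperbolicité de $K$ pour le flot $\Phi_\W$, puis sa maximalité locale (le fait qu'il soit isolé). Pour la première, nous restreindrions à $K$ la décomposition déjà obtenue sur tout $\Ptwostar$ à la proposition~\ref{p: distributions stables et instables}. Pour la seconde, nous construirions un voisinage ouvert isolant de $K$ au moyen des voisinages tubulaires $U_B$ et $U_S$ fournis par les lemmes~\ref{l: construction de UB} et~\ref{l: construction de US}.

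Pour l'hyperbolicité, observons d'abord que $\W$ ne s'annule en aucun point de $K$ : en effet $K \subset \Ptwostar = \Ptwo \setminus (B \cup \sing)$ et $\W$ ne s'annule que sur $B \cup \sing$ (corollaire du lemme~\ref{l: expression de Wtilde}). La droite $\R\W$ définit donc au-dessus de $K$ un sous-fibré de rang~$1$, qui joue le rôle de direction neutre du flot. La proposition~\ref{p: distributions stables et instables} fournit sur $\Ptwostar$, et en particulier sur le compact $K$, une décomposition continue et $D\Phi$-invariante
\[ T^\R \Ptwostar = T\FSFo_\W \oplus \R\W \oplus T\FIFo_W \]
dont les facteurs font entre eux un angle uniformément minoré pour la métrique $h$. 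Les estimées~\eqref{eq: estimation distribution stable forte} disent que $T\FSFo_\W$ est uniformément exponentiellement contracté dans le futur, et les estimées~\eqref{eq: estimation distribution instable forte}, lues pour les temps négatifs, que $T\FIFo_W$ l'est dans le passé. C'est exactement la définition d'un ensemble hyperbolique pour le flot $\Phi_\W$, de fibré stable $T\FSFo_\W$, de direction centrale $\R\W$ et de fibré instable $T\FIFo_W$ ; l'hyperbolicité s'en déduirait immédiatement.

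Pour la maximalité locale, nous commencerions par remarquer que $U_B \subset \Att_\W(B)$ et $U_S \subset \Rep_\W(\sing)$. Cela résulte de ce que $B$ et $\sing$ sont formés de points fixes de $\Phi_\W$ et de ce que toute trajectoire issue de $U_B \setminus B$ (resp. de $U_S \setminus \sing$) converge vers $B$ dans le futur (resp. vers $\sing$ dans le passé) d'après les lemmes~\ref{l: construction de UB} et~\ref{l: construction de US} ; en particulier $K$ est disjoint de $U_B \cup U_S$. Nous choisirions ensuite des voisinages tubulaires ouverts légèrement plus petits $U_B' \subset\subset U_B$ et $U_S' \subset\subset U_S$, contenant encore respectivement $B$ et $\sing$, et poserions
\[ U := \Ptwo \setminus \left( \overline{U_B'} \cup \overline{U_S'} \right) , \]
qui est un voisinage ouvert de $K$. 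Il s'agirait alors de vérifier que $\bigcap_{t \in \R} \Phi_\W^t(U) = K$. L'inclusion $K \subset \bigcap_t \Phi_\W^t(U)$ provient de l'invariance de $K$ et de $K \subset U$. Pour l'inclusion réciproque, un point $p \notin K$ appartient, par la définition~\eqref{eq: ensemble hyperbolique}, à $\Att_\W(B)$ ou à $\Rep_\W(\sing)$ ; dans le premier cas $\Phi_\W^t(p)$ tend vers un point de $B$ quand $t \to +\infty$ et entre donc dans $U_B'$, dans le second $\Phi_\W^t(p)$ entre dans $U_S'$ quand $t \to -\infty$. Comme $U_B'$ et $U_S'$ sont disjoints de $U$, l'orbite de $p$ quitte $U$, ce qui montrerait que $K$ est bien l'ensemble invariant maximal de $U$.

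Le point le plus délicat est de nature essentiellement technique : l'essentiel du contenu dynamique est déjà contenu dans la proposition~\ref{p: distributions stables et instables}, et la décomposition $\Ptwo \setminus K = \Att_\W(B) \cup \Rep_\W(\sing)$ est inscrite dans la définition même de $K$. Il faudra surtout soigner la construction du voisinage isolant $U$, en s'assurant qu'il est ouvert, qu'il contient $K$ et qu'il est disjoint des ouverts $U_B'$ et $U_S'$ dans lesquels toute orbite échappée vient séjourner, ce que règle l'emploi de deux échelles de voisinages tubulaires. On pourra d'ailleurs confirmer l'énoncé en observant que l'orbite complète de tout point de $K$ évite $U_B \cup U_S$ : si elle rencontrait $U_B$ (resp. $U_S$), le point convergerait vers $B$ dans le futur (resp. vers $\sing$ dans le passé), en contradiction avec son appartenance à $K$.
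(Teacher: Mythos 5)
Your proof is correct and follows essentially the same approach as the paper: hyperbolicity of \(K\) is obtained by restricting the uniform hyperbolic splitting of Proposition~\ref{p: distributions stables et instables} (what the paper phrases as the hyperbolicity of \(\W\) on \(\Ptwostar\) for the metric \(h\)), and maximality comes from the fact that every orbit outside \(K\) converges to \(B\) or to \(\sing\), hence leaves any isolating neighbourhood. Your explicit two-scale construction of the isolating neighbourhood \(U\) merely spells out in detail the argument that the paper compresses into a single sentence.
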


\begin{proof}[Démonstration] Ceci découle de l'hyperbolicité de \( \W\) sur \(\Ptwostar \) vis-à-vis de la métrique \(h\) construite au paragraphe \ref{s: metrique adaptee}. La maximalité découle de ce que toute orbite qui n'est pas dans \(K\) tend vers l'infini dans \(\Ptwostar\) par définition même de~\(K\).
\end{proof}

\begin{corollaire} \label{c: union varietes stables faibles} 
Les ensembles de \(\W\)-attraction (resp. de \(\W\)-répulsion) de \(K\) sont des unions de variétés stables faibles (resp. instables faibles) de points de~\(K\). 
\end{corollaire}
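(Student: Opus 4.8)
Le plan est de ramener cet énoncé à la théorie classique des ensembles hyperboliques localement maximaux, appliquée au compact $K$. On commencerait par rappeler que, d'après la proposition~\ref{p: K est hyperbolique}, $K$ est un compact $\Phi_\W$-invariant hyperbolique et, surtout, qu'il est \emph{localement maximal} : la maximalité y est en effet établie sous la forme « toute orbite qui n'est pas dans $K$ tend vers $B$ ou vers $\sing$ », ce qui signifie précisément que $K$ est le plus grand invariant restant à distance bornée de $B\cup\sing$. La décomposition de la proposition~\ref{p: distributions stables et instables} attache alors à chaque $p\in K$ une variété stable faible $\FSFa_\W(p)$, tangente à $T\FSFo_\W\oplus\R\W=\TRF$, et une variété instable faible $\FIFa_\W(p)$, tangente à $\R\W\oplus T\FIFo_\W$ ; ces sous-variétés immergées, fournies par le théorème de la variété stable, sont caractérisées dynamiquement : $\FSFa_\W(p)$ est la réunion des variétés stables fortes $\FSFo_\W(\Phi_\W^s(p))$ pour $s\in\R$, autrement dit l'ensemble des $q$ dont le futur est asymptote à l'orbite de $p$ modulo un décalage temporel.

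On établirait ensuite l'inclusion facile $\bigcup_{p\in K}\FSFa_\W(p)\subset\Att_\W(K)$ : si $q\in\FSFa_\W(p)$, il existe $s\in\R$ tel que $d(\Phi_\W^t(q),\Phi_\W^{t+s}(p))\to 0$, et comme $\Phi_\W^{t+s}(p)\in K$ pour tout $t$, on en déduit $d(\Phi_\W^t(q),K)\to 0$, c'est-à-dire $q\in\Att_\W(K)$.

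La réciproque $\Att_\W(K)\subset\bigcup_{p\in K}\FSFa_\W(p)$ constitue le cœur de la démonstration. Partant de $q\in\Att_\W(K)$, de sorte que $d(\Phi_\W^t(q),K)\to 0$, on invoquerait la capture par une variété stable forte locale : par uniformité de la taille des variétés stables fortes le long du compact hyperbolique $K$ et par maximalité locale, il existe $\varepsilon>0$ tel que tout point dont le futur reste dans le $\varepsilon$-voisinage de $K$ appartient à la variété stable forte locale d'un unique point de $K$. En appliquant ceci à $\Phi_\W^T(q)$ pour $T$ assez grand, on obtient un point $r\in K$ tel que $\Phi_\W^T(q)\in\FSFo_\W(r)$ ; en remontant le flot de $T$, et puisque le flot envoie variété stable forte sur variété stable forte, il vient $q\in\FSFo_\W(\Phi_\W^{-T}(r))\subset\FSFa_\W(\Phi_\W^{-T}(r))$ avec $\Phi_\W^{-T}(r)\in K$, ce qui achève l'égalité relative à $\Att_\W(K)$.

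Enfin, l'assertion concernant $\Rep_\W(K)$ et les variétés instables faibles s'obtiendrait mot pour mot en appliquant le raisonnement précédent au champ renversé $-\W$, qui échange les rôles des facteurs stable et instable de la décomposition~\eqref{eq: decomposition}. La principale difficulté est l'étape de capture par la variété stable forte locale : c'est le seul point qui utilise réellement la structure de produit local d'un ensemble hyperbolique \emph{localement maximal} (et non simplement hyperbolique), les autres étapes étant formelles.
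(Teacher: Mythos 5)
Your proof is correct and takes essentially the same route as the paper: the paper's entire proof consists of citing Proposition \ref{p: K est hyperbolique} together with \cite[Theorem 5.3.25]{FH} (the classical statement that, for a compact locally maximal hyperbolic set, the attraction set is the union of the weak stable manifolds of its points), and your argument is exactly an unpacking of that theorem — the easy inclusion and the time-reversal reduction for \(\Rep_\W(K)\) are the formal parts, while your ``capture'' step is precisely the theorem's non-trivial content, which you (like the paper) ultimately defer to the classical theory of locally maximal hyperbolic sets. One minor technical remark: for flows, shadowing naturally yields capture by \emph{weak} stable local manifolds (a time reparametrization cannot be avoided a priori), so your claim of capture by \emph{strong} stable local manifolds of points of \(K\) is slightly stronger than what the standard argument gives directly — it does hold, via the asymptotic-phase decomposition of weak stable manifolds into strong stable ones along orbits, but since the weak version is all the corollary needs, nothing is lost by weakening that step.
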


\begin{proof} 
C'est une conséquence de la proposition \ref{p: K est hyperbolique} et de~\cite[Theorem 5.3.25] {FH}.
\end{proof}

\begin{corollaire} \label{cor:nombre-denombrable-orbites-periodiques}
%L'ensemble \(K\) admet un nombre dénombrable d'orbite périodiques.
L'ensemble des orbites périodiques de \(\W\) est dénombrable.
\end{corollaire}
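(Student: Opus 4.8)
Je commencerais par observer que toute orbite périodique de $\W$ est nécessairement contenue dans l'ensemble hyperbolique $K$. En effet, une telle orbite $\gamma$ est une partie compacte de $\Ptwostar$ invariante par le flot $\Phi_\W$ ; elle ne peut rencontrer $\Att_\W(B)$, faute de quoi sa trajectoire convergerait vers $B$ dans le futur, ce qui contredit la périodicité puisque $B\subset \Ptwo\setminus\Ptwostar$ ; le même argument dans le passé montre qu'elle évite $\Rep_\W(\sing)$. Ainsi $\gamma\subset K=\Ptwo\setminus\big(\Att_\W(B)\cup\Rep_\W(\sing)\big)$, et il suffit de dénombrer les orbites périodiques du flot restreint au compact $\Phi_\W$-invariant $K$.

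L'étape suivante consisterait à tirer parti de l'hyperbolicité de $K$ (proposition \ref{p: K est hyperbolique}). D'un côté, comme $\W$ ne s'annule pas sur $\Ptwostar\supset K$, le théorème de redressement du flot fournit, au voisinage de chaque point de $K$, une boîte à flot de taille uniformément minorée ; une orbite ne pouvant se refermer qu'après avoir traversé une telle boîte, les périodes minimales des orbites périodiques sont minorées par une constante $\tau_0>0$. De l'autre, la scission $D\Phi$-invariante de la proposition \ref{p: distributions stables et instables}, restreinte à une orbite périodique $\gamma\subset K$, montre que l'application de premier retour de Poincaré admet en un point de $\gamma$ un point fixe hyperbolique, donc isolé parmi les points fixes.

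Je montrerais alors que, pour tout $T>0$, l'ensemble $\mathcal C_T$ des orbites périodiques de période minimale dans $[\tau_0,T]$ est fini. Dans le cas contraire, on choisirait un point $p_k$ sur une suite d'orbites deux à deux distinctes de périodes $\tau_k\in[\tau_0,T]$ ; la compacité de $K$ et de $[\tau_0,T]$ permettrait d'extraire $p_k\to p_\infty\in K$ et $\tau_k\to\tau_\infty\geq\tau_0>0$, et la continuité du flot donnerait $\Phi_\W^{\tau_\infty}(p_\infty)=p_\infty$, c'est-à-dire une orbite périodique limite. Son hyperbolicité, donc son isolement parmi les orbites périodiques, forcerait les $p_k$ à appartenir à l'orbite de $p_\infty$ pour $k$ assez grand, contredisant leur distinction. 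L'ensemble de toutes les orbites périodiques s'écrivant $\bigcup_{n\geq 1}\mathcal C_n$, réunion dénombrable d'ensembles finis, il est dénombrable.

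La principale difficulté réside dans le passage à la limite du troisième paragraphe : il faut à la fois garantir que l'orbite limite est réellement périodique — ce qui repose de manière cruciale sur la minoration $\tau_\infty\geq\tau_0>0$ issue de la non-annulation de $\W$ sur $K$ — et que l'hyperbolicité interdit l'accumulation d'orbites périodiques distinctes de période bornée au voisinage de l'orbite de $p_\infty$. Cet isolement uniforme est précisément l'expansivité du flot restreint à un ensemble compact hyperbolique, que je citerais de la théorie classique des flots hyperboliques \cite{FH}.
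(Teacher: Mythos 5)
Votre démonstration est correcte et suit essentiellement la même démarche que celle de l'article : toute orbite périodique est contenue dans le compact hyperbolique \(K\), l'hyperbolicité entraîne que les orbites périodiques de période bornée par \(T\) sont isolées, la compacité de \(K\) en donne la finitude pour chaque \(T\), et la réunion dénombrable sur \(T=n\) conclut. Vous ne faites qu'expliciter des détails que l'article laisse implicites (appartenance des orbites périodiques à \(K\), minoration uniforme des périodes, argument d'expansivité tiré de \cite{FH}).
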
 

\begin{proof} 
Comme \(K\) est hyperbolique, pour tout \(T>0\), les orbites périodiques de périodes bornées par \(T\) sont isolées. Par compacité de \(K\), il n'y en a qu'un nombre fini. \end{proof} 

\subsection{Les feuilletages stables et instables de \(\W\)}\label{ss: feuilletages stables et instables}

Un feuilletage de classe \( C^{1,0} \) de dimension \(k\) d'une variété \(M\) de classe \(C^1\) est un feuilletage topologique de dimension \(k\) de \(M\) dont les feuilles sont des sous-variétés immergées de classe \(C^1\) de \(M\) qui dépendent de fa\c{c}on continue du paramètre transverse dans la topologie~\(C^1\). Un feuilletage de classe \( C^{1,0} \) admet une distribution tangente qui est une distribution continue de dimension \(k\) de \(TM\).

%Un feuilletage de classe \( C^{1,0}\) de dimension \(k\) d'une variété \(M\) de classe \( C^1 \) et de dimension \( n\) est  un atlas maximal de cartes \(\{(U_i, \varphi_i) \}\), où \(\{U_i\}\) est un recouvrement de \(M\) par des ouverts, \(\varphi_i : U_i \rightarrow \R^k \times \R^{n-k}   \) est un homéomorphisme tel que \( \varphi_i ^{-1} (\cdot, v) \) est un plongement de classe \(C^1\) pour tout \(v\), et dépend contin\^ument de \(v\) dans la topologie \(C^1\), et enfin pour tout \(i,j\), \( \varphi _j\circ \varphi^{-1} _i : \varphi_i (U_i\cap U_j) \rightarrow  \varphi_j (U_i\cap U_j) \) est localement de la forme \( (u,v) \mapsto (\alpha (u,v) , \beta (v) ) \), pour \( (u,v) \in \R^p \times \R^q\), avec \(\alpha(\cdot, v)\) de classe \(C^1\) et dépendant contin\^ument de \(v\) dans la topologie \(C^1\). Un feuilletage de classe \( C^{1,0}\) d'une variété \(M\) admet une distribution tangente continue définie dans chaque \(U_i \) comme l'image des différentielles des applications  \(\varphi_i ^{-1} (\cdot, v) \). 

\begin{proposition}\label{p: feuilletages stables et instables}
Il existe des uniques feuilletages  \( \FSFo_W, \FSFa_W , \FIFo_W, \FIFa_W\) sur \( \Ptwostar \) de classe \(C^{1,0}\) et ayant pour distributions tangentes 
\[ T\FSFo_W,\ \  T\FSFa_W= T\FSFo_W +\R W ,\ \  T\FIFo_W, \ \ T \FIFa_W= T\FIFo_W +\R W . \]
\end{proposition}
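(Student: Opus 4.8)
\emph{Approche.} Je démontrerais cette proposition en appliquant la théorie des variétés stables et instables au flot uniformément hyperbolique $\Phi_\W$ sur la variété $\Phi_\W$-invariante $\Ptwostar$. Le point de départ est la proposition~\ref{p: distributions stables et instables}, qui fournit sur $\Ptwostar$ la décomposition continue et $D\Phi$-invariante $T^\R\Ptwostar = T\FSFo_\W\oplus\R\W\oplus T\FIFo_\W$, avec $\TRF_{|\Ptwostar}=T\FSFo_\W\oplus\R\W$, les estimées \eqref{eq: estimation distribution stable forte} et \eqref{eq: estimation distribution instable forte} étant uniformes sur $\Ptwostar$. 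La direction centrale $\R\W$ est neutre puisque $D\Phi_\W^t(\W)=\W$, tandis que $T\FSFo_\W$ est contractée et $T\FIFo_\W$ dilatée, à des taux strictement plus rapides : on se trouve donc dans une situation partiellement hyperbolique à direction centrale de dimension $1$. Ce sont ces estimées uniformes, jointes à la borne \eqref{eq: sup} du lemme~\ref{l: hyperbolicite}, qui permettront de surmonter la non-compacité de $\Ptwostar$.

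Je construirais d'abord le feuilletage stable fort $\FSFo_\W$. En chaque point $p\in\Ptwostar$, la méthode de Hadamard--Perron (transformée de graphe) produit une variété stable forte locale, c'est-à-dire l'unique petit disque de classe $C^1$ passant par $p$, tangent en $p$ à la droite $T\FSFo_\W(p)$ et formé des points voisins $q$ tels que $d(\Phi_\W^t(q),\Phi_\W^t(p))\to 0$ lorsque $t\to+\infty$. La convergence de la transformée de graphe repose sur le fait que la contraction le long de $T\FSFo_\W$ domine le taux neutre de la direction centrale, et la dépendance $C^1$-continue du disque vis-à-vis de $p$ découle de la continuité de la décomposition et du caractère contractant de la transformée. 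On globalise en réunissant les images par $\Phi_\W^{-t}$, pour $t\geq 0$, de la variété stable forte locale de $\Phi_\W^t(p)$ : on obtient une sous-variété $C^1$ injectivement immergée, notée $\mathcal{W}^{ss}(p)$, tangente partout à $T\FSFo_\W$. La caractérisation dynamique montre que deux telles variétés sont disjointes ou confondues, d'où un feuilletage $\FSFo_\W$ de classe $C^{1,0}$ de distribution tangente $T\FSFo_\W$. Le même argument appliqué au flot renversé $\Phi_\W^{-t}$, pour lequel $T\FIFo_\W$ est uniformément contractée d'après \eqref{eq: estimation distribution instable forte}, fournit le feuilletage instable fort $\FIFo_\W$.

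Les feuilletages faibles s'obtiennent par saturation par le flot. Puisque $\Phi_\W^t$ envoie variété stable forte sur variété stable forte --- la caractérisation dynamique étant équivariante ---, la réunion $\bigcup_{t\in\R}\Phi_\W^t(\mathcal{W}^{ss}(p))$ est une sous-variété $C^1$ tangente à $T\FSFa_\W=T\FSFo_\W+\R\W$ ; ces réunions constituent le feuilletage stable faible $\FSFa_\W$, qui est de classe $C^{1,0}$ car $\FSFo_\W$ l'est et le flot est lisse. On procède de même pour $\FIFa_\W$. Quant à l'unicité, elle ne découle pas de la seule continuité des distributions (un champ de droites continu n'est pas nécessairement uniquement intégrable) mais de la structure dynamique : toute feuille d'un feuilletage de distribution tangente $T\FSFo_\W$ est une variété intégrale de ce champ de droites le long de laquelle les vecteurs tangents sont uniformément exponentiellement contractés d'après \eqref{eq: estimation distribution stable forte}, de sorte que $d(\Phi_\W^t(q),\Phi_\W^t(p))\to 0$ ; une telle feuille est donc contenue dans $\mathcal{W}^{ss}(p)$, et l'égalité des dimensions force la coïncidence. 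Le même raisonnement s'applique aux trois autres distributions.

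\emph{Obstacle principal.} La difficulté essentielle est précisément cette intégrabilité et cette unicité de distributions qui ne sont a priori que continues : le théorème de Frobenius ne s'applique pas, et c'est la dynamique --- contraction et dilatation uniformes dominant la direction centrale --- qui fournit à la fois l'existence des feuilles, via la transformée de graphe, et leur unicité, via la caractérisation de Lyapunov. Le point technique le plus délicat est la régularité $C^{1,0}$, c'est-à-dire la dépendance $C^1$-continue des feuilles vis-à-vis du paramètre transverse ; on l'obtient en contrôlant uniformément la dépendance des points fixes de la transformée de graphe, grâce aux estimées uniformes de la proposition~\ref{p: distributions stables et instables} et du lemme~\ref{l: hyperbolicite}. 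On pourrait alternativement invoquer le cadre de \cite[Theorem 5.3.25]{FH}, déjà utilisé au corollaire~\ref{c: union varietes stables faibles}, les estimées uniformes sur $\Ptwostar$ permettant d'en reproduire la démonstration malgré la non-compacité.
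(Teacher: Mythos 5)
Votre approche est correcte dans son principe, mais elle suit une route réellement différente de celle du texte. Vous traitez \(\Ptwostar\) tout entier comme une variété non compacte munie d'un flot uniformément partiellement hyperbolique (contraction sur \(T\FSFo_\W\), centre neutre \(\R\W\), dilatation sur \(T\FIFo_\W\)) et vous appliquez Hadamard--Perron en chaque point. Le texte, lui, découpe l'argument en deux régimes : sur l'ensemble de Fatou privé de \(B\), la distribution \(T\FSFo_\W\) est \emph{lisse} — c'est le saturé par le flot de \(T^\R\FF\cap T\partial U_B\) (lemme \ref{l: construction de UB}) — et s'intègre donc classiquement en un fibré en cercles ; sur l'ensemble de Julia privé de \(\sing\), les trajectoires convergent vers le compact hyperbolique \(K\) et le théorème de la variété stable pour un ensemble hyperbolique \emph{compact} (\cite[Thm 6.1.1]{FH}) s'applique tel quel, les feuilles locales étant ensuite prolongées par équivariance ; le recollement des deux régimes (convergence \(C^1\) des feuilles de Fatou vers celles de Julia) et l'unicité sont formalisés par un critère général d'intégrabilité des distributions continues (lemme \ref{l: critere integrabilite distribution continue}). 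L'avantage du texte est de ne faire appel à la théorie hyperbolique que sur un compact, où les énoncés standards s'appliquent sans modification ; votre approche est plus unifiée et évite la distinction de cas, mais elle exige une version de Hadamard--Perron valable sur une variété non compacte.

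C'est le point qu'il vous faudrait consolider : les estimées que vous invoquez (éq. \eqref{eq: sup}, prop. \ref{p: distributions stables et instables}, lemme \ref{l: hyperbolicite}) sont toutes d'ordre un, or la convergence de la transformée de graphe vers une variété \(C^1\) dépendant contin\^ument du point requiert en outre un contrôle uniforme de la non-linéarité locale du flot (module de continuité uniforme de \(D\Phi_\W^1\) à échelle uniforme, géométrie bornée de \((\Ptwostar,h)\)). Ce contrôle est bien vérifié ici — près de \(B\) et de \(\sing\), le couple flot-métrique est un produit explicite dans lequel le flot est une translation (lemmes \ref{l: construction de UB} et \ref{l: construction de US}), et le reste est compact — mais cela doit être dit, faute de quoi l'argument est incomplet sur une variété non compacte. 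Seconde réserve : pour l'unicité des feuilletages \emph{faibles}, votre phrase \og le même raisonnement s'applique \fg\ est inexacte, car les vecteurs de \(T\FSFa_\W\) portés par \(\R\W\) ne sont pas contractés ; l'argument correct est celui du texte : toute feuille tangente à \(T\FSFa_\W\) contient les orbites de \(\Phi_\W\) de ses points (puisque \(\W\), lisse, lui est tangent) et les feuilles stables fortes de ses points (par unicité de leurs germes), donc coïncide avec le saturé correspondant ; on reconnaît alors \(\FSFa_\W=\FF\), identification que le texte met d'ailleurs au centre de sa stratégie de stabilité structurelle.
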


\begin{proof} Nous utiliserons le critère suivant. %(\({\bf B}^k\) désigne la boule unité dans \({\bf R}^k\))

\begin{lemme} \label{l: critere integrabilite distribution continue}
Soit  \( D \subset TM\) une distribution continue de dimension \(k\) d'une variété \(M\) de classe \(C^1\). 
Supposons qu'il existe un recouvrement de \(M\) par des ouverts \(U_i \) sur lesquels on a des applications 
\( F_i : \R ^k \times U_i \rightarrow M \) telles que pour  \(p\in U_i\), \(F_i (p)=p\) et \(F_i (\cdot, p) \) est une immersion de classe \(C^1\) tangente à \(D\) qui dépend contin\^ument de \(p\) dans la topologie \(C^1\). 

 Supposons de plus que, pour tout point \(p\in M\), le germe de sous-variété immergée de classe \(C^1\) tangente à \(D\) en \(p\) soit unique. Alors il existe un unique feuilletage de dimension \(k\) et de classe \( C^{1,0}\) dont la distribution tangente est \(D\). 
\end{lemme} 

\begin{proof} 
Pour tout \(i\) et tout point \( p\in U_i \), considérons une variété \(T_{i,p}\) transverse à \(D\) passant par \(p\). La restriction \(f_{i,p} \) de \(F_i\) à \( \R ^k \times T_{i,p}\rightarrow M \) induit un homéomorphisme d'un voisinage \( V_{i,p} \) de \( (0, p)\) dans \( \R ^k \times T_{i,p} \) sur un voisinage \( U_{i,p} \) de \(p\). Ces homéomorphismes envoient localement les plaques \(( \R^k \times t_{i,p})\cap V_{i,p} \) sur des variétés de classes \(C^1\) immergées tangentes à \(D\). La propriété d'unicité des germes de sous-variétés intégrales montre que les inverses \( f^{-1}_{i,p} : U_{i,p} \rightarrow V_{i,p}\) fournissent un atlas de cartes feuilletées et définissent un feuilletage de classe \( C^{1,0} \) dont la distribution tangente est \(D\). 
\end{proof} 

%\subsubsection{Les feuilletages stables de \(\W\) } \label{sss: feuilletage stable}
Commen\c{c}ons par construire le feuilletage stable fort \(\FSFo_W\).  Le lemme \ref{l: construction de distributions}, ainsi que la construction de la métrique \( h\) dans \(U_B\), permettent d'affirmer que la distribution \( T\FSFo _\W  \) est l'intersection \( \TRF \cap T\partial U_B\) en restriction à \(\partial U_B\). Étant \(\Phi_W\)-invariante, on en déduit que \( T\FSFo _\W  \) est lisse sur le \(\Phi_W\)-saturé de \(\partial U_B\), c'est-à-dire sur l'ensemble de Fatou privé de \(B\). En particulier, sur l'ensemble de Fatou privé de \(B\) la distribution \( T\FSFo _\W  \) s'intègre en un unique feuilletage lisse, qui se trouve être une fibré en cercles au-dessus du produit \( \R \times B\). Dans l'ensemble de Julia privé de \(\sing\), les trajectoires de \(\Phi_W\) convergent dans le futur vers le compact hyperbolique \(K\) : le théorème de la variété stable \cite[Thm 6.1.1]{FH} montre qu'il existe une famille \( \{ I_p \}_{p\in K}\) d'intervalles plongés \(I_p\),  de classe \( C^\infty\), passant par~\(p\) et tangents à \( T\FSFo_W\). De plus ces variétés dépendent continûment de~\(p\) dans la topologie \(C^1\).   Ces variétés se prolongent en des variétés immergées \( \FSFo_ W (p)\) passant par \(p\) qui vérifient l'équivariante \( \Phi^t_W ( \FSFo_ W (p)) =  \FSFo_ W (\Phi_W ^t(p))\) et qui sont caractérisées par 
\begin{equation} \FSFo_ W (p) = \{q \in \Ptwostar \ |\ d(\Phi_W^t (q), \Phi_W^t(p) ) \rightarrow_{t\rightarrow +\infty} 0 \}. \end{equation}  
Les estimations uniformes de la proposition \ref{p: distributions stables et instables} montrent que tout germe de variété tangente à \(D\) passant par un point \(q\in \FSFo_ W (p)\) avec \( p\in K\) est contenu dans \( \FSFo_ W (p)\) et est donc unique. De plus,  lorsque qu'un point \(p'\) de l'ensemble de Fatou converge vers un point \(p\) de l'ensemble de Julia, les variétés \( \FSFo_W (p')\) convergent vers \(\FSFo_W(p)\) et, étant tangentes à la distribution \(T\FSFo_W\) qui est continue, la convergence a lieu dans la topologie \(C^1\). On peut donc appliquer le lemme   \ref{l: critere integrabilite distribution continue} pour conclure à l'existence d'un feuilletage stable fort \( \FSFo_W\) globalement défini sur \( \Ptwostar\) de classe \( C^{1,0}\) qui de plus est unique.  

%e tout point \(p\) dans le domaine d'attraction de \(K\) est contenu dans une unique variété stable forte d'un certain point \(q\) de \(K\). L'estimation \eqref{eq: estimation distribution stable forte} montre que cette variété est l'unique variété immergée tangente à \( T\FSFo_W\) et passant par \(p\).  Les hypothèses du lemme \ref{l: critere integrabilite distribution continue} sont donc satisfaites pour la distribution stable forte de \(\W\), il existe  donc un unique feuilletage \(\FSFo_W \) de dimension  \(1\) et de classe \( C^{1,0}\) dont la distribution tangente est \( T\FSFo _\W  \).%\footnote{Le fait qu'il s'agisse d'un feuilletage vient de la continuité de ses feuilles, c'est-à-dire que l'application qui à un point \(p\) associe la feuille \(\FSFo_W (p)  \) de \(\FSFo_W\) passant par \(p\) est continue. Cette continuité est \‘evidente si l'on se place dans le domaine de Fatou privé de \(B\), car \(\FSFo_W\) y est une fibration en cercles, et résulte de l'unicité de la variété stable dans l'ensemble de Julia} 

Le feuilletage stable faible de \(\W\) vis-à-vis de la métrique riemannienne \(h\) est alors par construction le feuilletage  
\[ \FSFa _W = \FF\] 
(voir le lemme \ref{l: hyperbolicite}). Un tel feuilletage est unique car ses feuilles doivent à la fois être saturées par celles de \( \FSFo_W\) et par les orbites de \(\Phi_W\).

%\subsubsection{Les feuilletages instables de \(W\)}  \label{sss: feuilletage instable} 

Construisons à présent le feuilletage insta\-ble fort.  Le lemme \ref{l: construction de distributions}, ainsi que la construction de la métrique \( h\) dans \(U_S\), permettent d'affirmer que la distribution \( T\FIFo _\W  \) est la distribution tangente du  feuilletage de Reeb \(\mathcal R\) en restriction à \(\partial U_S\) (lemme \ref{l: construction de US}). Étant \(\Phi_W\)-invariante, on en déduit que \( T\FIFo _\W  \) s'intègre sur le \(\Phi_W\)-saturé de \(\partial U_S\), c'est-à-dire sur  l'ensemble \( \left(\Rep _W \sing\right)\setminus \sing\) en un unique feuilletage de dimension \(2\). Dans le complémentaire de \( \left(\Rep _W \sing\right)\setminus \sing\) privé de \(B\), les trajectoires de \(\Phi_W\) convergent dans le passé vers le compact hyperbolique \(K\), c'est-à-dire qu'elles appartiennent à \(\Rep _W(K)\) : on peut alors appliquer le théorème de la variété instable forte et raisonner de manière analogue au cas du feuilletage stable fort pour construire un unique feuilletage instable fort \(\FIFo_W \) de classe \( C^{1,0}\) dont la distribution tangente est \( T\FIFo _\W  \).  

L'unique feuilletage tangent à \( T\FIFa_W\)  est le feuilletage \(\FIFa_W\) de dimension \(3\) de \(\Ptwostar\) dont les feuilles sont les saturés des feuilles du feuilletage instable fort \(\FIFo_W\) par le flot \(\Phi_W\).   \end{proof}

\begin{remarque}\label{r: reparametrage}
La construction des feuilletages stables et instables que nous avons décrite pour le champ \(\W\) fonctionne de fa\c{c}on similaire pour toute reparamétrisation \( \exp (\varphi) W\) du champ \( W\), où \(\varphi: \Ptwostar\rightarrow \R \) est une fonction lisse constante en dehors d'un compact.   
\end{remarque}

\begin{remarque} Par construction, toutes les feuilles de \(\FIFo_W\) sont des sections transverses ; ce sont des courbes entières pour la structure holomorphe induite par la structure transverse holomorphe de \(\FF\) (nous n'utiliserons pas ce fait dans ce qui suit).
\end{remarque}

\begin{remarque} Il est intéressant de noter qu'au voisinage des singularités de~\(\FF\) le feuilletage \(\FIFa_W\) est localement le produit du feuilletage \(\mathcal R\) sur \(\partial U_S\) par une demi-droite réelle, tandis qu'au voisinage de la section transverse \(B\), il possède localement la structure d'un livre ouvert, même si cette structure ne se globalise pas à \( B\) tout entier :  il y a un phénomène de monodromie le long des chemins fermés de \(B\).
\end{remarque}

\begin{corollaire}
Les ensembles de \(\W\)-attraction (resp. de \(\W\)-répulsion) de \(K\) sont contenus dans \(\Ptwostar\) et ont une structure de lamination par variétés stables faibles (resp. instables). 
\end{corollaire}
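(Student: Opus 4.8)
Le plan est de ramener la structure de lamination à celle de feuilletage global obtenue à la proposition~\ref{p: feuilletages stables et instables}, en la restreignant à un fermé saturé. Je traiterais d'abord le cas de l'ensemble de \(\W\)-attraction, celui de \(\W\)-répulsion étant entièrement analogue. Il faut commencer par vérifier que \(\Att_{\W}(K)\) est contenu dans \(\Ptwostar\) et y est fermé. L'inclusion résulte de l'identification, faite au paragraphe~\ref{ss: ensemble hyperbolique}, de \(\Att_{\W}(K)\) avec l'ensemble de Julia de \(\FF\) privé de ses singularités : cet ensemble ne rencontre ni \(\sing\), que l'on a retiré, ni \(B\), ce dernier étant contenu dans l'ensemble de Fatou \(\Att_{\W}(B)=\Fatou(\FF)\). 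Comme cet ensemble de Fatou \(D=\Att_{\W}(B)\) est un ouvert de \(\Ptwo\) (théorème~\ref{t: existence domaine errant}), l'ensemble de Julia est fermé et \(\Att_{\W}(K)=\Julia(\FF)\cap \Ptwostar\) est fermé dans \(\Ptwostar\).

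L'étape décisive est ensuite d'identifier les variétés stables faibles des points de \(K\) fournies par le corollaire~\ref{c: union varietes stables faibles} aux feuilles du feuilletage stable faible \(\FSFa_W=\FF\) construit à la proposition~\ref{p: feuilletages stables et instables}, feuilletage globalement défini sur \(\Ptwostar\) et de classe \(C^{1,0}\). Une fois cette identification acquise, le corollaire~\ref{c: union varietes stables faibles} affirme précisément que \(\Att_{\W}(K)\) est un fermé de \(\Ptwostar\) saturé par \(\FSFa_W\).

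Il restera alors à observer que la restriction d'un feuilletage de classe \(C^{1,0}\) à un fermé saturé est une lamination : dans une carte feuilletée \(U\simeq \R^2\times T\) de \(\FSFa_W\), où \(T\) désigne une transversale, la trace \(\Att_{\W}(K)\cap U\) prend la forme \(\R^2\times (A\cap T)\) avec \(A\cap T\) fermé dans \(T\), ce qui est exactement la donnée transverse continue d'une lamination par variétés stables faibles. Pour \(\Rep_{\W}(K)\), je procéderais de même : cet ensemble est le complémentaire dans \(\Ptwostar\) de l'ouvert \(\Rep_{\W}(\sing)\) — ouvert parce que chaque singularité est une source d'après la propriété~\(\Qr\) — donc fermé dans \(\Ptwostar\), et on le sature cette fois par le feuilletage instable faible \(\FIFa_W\), dont les feuilles sont les variétés instables faibles des points de \(K\).

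La difficulté principale n'est pas calculatoire mais conceptuelle : il faut s'assurer que les deux descriptions des variétés stables faibles — celle issue du théorème de la variété stable pour le compact hyperbolique \(K\) et celle donnée par les feuilles du feuilletage global \(\FSFa_W\) — coïncident feuille par feuille. C'est cette coïncidence qui garantit que la restriction du feuilletage au fermé saturé hérite automatiquement d'une structure de lamination, sans qu'il soit nécessaire de reconstruire à la main les cartes transverses.
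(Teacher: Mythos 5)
Votre démonstration est correcte et suit essentiellement la même voie que celle de l'article, dont la preuve tient en une ligne et invoque précisément les trois ingrédients que vous mobilisez : la fermeture de ces ensembles dans \(\Ptwostar\), le corollaire~\ref{c: union varietes stables faibles} et la proposition~\ref{p: feuilletages stables et instables}. Vous ne faites qu'expliciter ce que le texte laisse implicite (identification de \(\Att_\W(K)\) avec l'ensemble de Julia privé de \(\sing\), coïncidence des variétés stables faibles avec les feuilles de \(\FF\), et restriction d'un feuilletage de classe \(C^{1,0}\) à un fermé saturé).
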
 

\begin{proof} Cela résulte du fait que ces ensembles sont fermés dans \( \Ptwostar\), du corollaire \ref{c: union varietes stables faibles}  et de la proposition \ref{p: feuilletages stables et instables}.\end{proof}

%\begin{corollaire} Il existe un feuilletage de codimension un réel sur \(\Ptwostar\), dont la distribution tangente est continue, dont les feuilles sont \(C^\infty\), et qui est le produit d'un feuilletage de Reeb par une droite au voisinage des points de \(\sing\), et localement un livre ouvert au voisinage des points de \(B\).\end{corollaire}

\section{Groupe affine, ensemble hyperbolique et conjecture d'Anosov} \label{ss: structure hyperbolique feuilletee} 

Le but de ce paragraphe est de vérifier la conjecture d'Anosov pour un feuilletage algébrique du plan projectif complexe satisfaisant les propriétés~\( \Pr\) et~\( \Qr\). En particulier, nous construisons une action localement libre du groupe affine qui nous servira pour établir la stabilité structurelle de ces feuilletages. 

\subsection{Action du groupe affine} 
Nous noterons \(\AF\) le groupe \(\R^2\) muni de la loi \((x,t) \cdot (x',t') = (e^{t'} x + x', t+t')\). 

\begin{lemme}\label{l: action groupe affine} Soit \(\FF\) un feuilletage algébrique de \(\Ptwo\) satisfaisant les propriétés~\( \Pr\) et~\( \Qr\). Il existe une action localement libre et continue du groupe \(\AF\)  sur \(\Ptwostar \) dont les orbites sont les feuilles de la restriction de \(\FF\) à \(\Ptwostar\).
\end{lemme} 

%Dans le même temps, nous construisons } une métrique \(h_H\) sur \(T^\R \FF\), qui est continue, et qui en restriction à chaque feuille de \(\FF\) est localement isométrique à une métrique hyperbolique (i.e. une métrique à courbure constante \(-1\)). %Comme modèle d'une métrique hyperbolique, nous prendrons la métrique \( e^{-2t} dx^2 + dt ^2 \) sur \(\R^2\). Cette structure hyperbolique nous sera très utile pour étendre la stabilité structurelle au domaine errant (voir paragraphe \ref{ss: stabilite structurelle globale}).

\begin{proof} Soit \(T >0 \) un nombre réel et \(\hWF ^T\) la métrique sur le fibré \( \WF\)  définie par 
\[ \hWF ^T (\cdot):= \int _0^T \hWF ( D\Phi_{\W}^{-s} \cdot) ds , \]
où \(\hWF\) est la métrique sur \( \WF\) induite par \(h\) (voir paragraphe \ref{s: metrique adaptee}).
Définissons la fonction 
\[ u(p) := \frac{d \log  \hWF ^T(D \Phi_{\W}^t v )}{dt} _{|t=0} \text{ pour } v\in \WF (p)\setminus 0 ,\]
qui ne dépend pas du choix de \(v\). Comme 
\[ u(p) = \frac{ -\hWF (D \Phi_{\W}^{-T} v ) + \hWF  (v)}{\hWF^T (v) } ,\]
le lemme \ref{l: hyperbolicite} montre que si \(T\) est choisi suffisamment grand,  la fonction \(u\) 
vérifie 
\begin{equation} \label{eq: u bornee}  -a<  u < -b\end{equation}  
pour certaines constantes uniformes \(a,b>0\). Nous définissons le reparamétrage de \(\W\) par  
\begin{equation}\label{eq: reparametrisation de W} W_R := \frac{-2} {u} W .\end{equation} 
Comme la restriction de \(\W\) à \(\Ptwostar \) est complet, l'estimée \eqref{eq: u bornee} montre que \( W_R\) est complet également. On a de plus la formule
\begin{equation}\label{eq: expansion apres reparametrage} h_{\W,\FF}^T(D\Phi_{\W_R}^t v )= \exp (-2t) h_{\W,\FF}^T( v ) \text{ pour tout } v\in \WF .\end{equation}
Notons que, vu la forme particulière du couple \( (h, \W)\) dans \( U_B\cup U_S\), la fonction \(u\) est constante dans \( \Phi _{\W} ^T (U_B) \cup U_S\) ; en particulier toutes ses dérivées sont bornées. Le lemme \ref{l: hyperbolicite} est donc satisfait pour le champ \(\W_R\) à la place du champ \(\W\). 
%Pour tout \(t\in \R\), le supremum 
 %\begin{equation} \label{eq: supremum reparametrage} \sup _{s \in [0,t], p \in \Ptwo \setminus (B\cup \sing}  \norm{ D\Phi_{\W_R}^{-s} (p) }_{h^T}   \end{equation}
%est fini. 
Les propositions \ref{p: distributions stables et instables} et \ref{p: feuilletages stables et instables} sont donc également valables pour le champ \(\W_R\) (remarque \ref{r: reparametrage}) et fournissent l'existence d'un feuilletage stable fort \( \FSFo_{\W_R}\) sur  \( \Ptwostar\) de dimension réelle \(1\), de classe \( C^{1,0}\) dont la distribution tangente \( T\FSFo_{\W_R}\) est contenue dans \( \TRF\) et fait un angle avec \( \R \W\) minoré par une constante strictement positive.

%Construisons maintenant un feuilletage stable fort de \(\W_R\), que l'on notera \(\FSFo_{\W_R}\). Observons que dans un voisinage suffisamment petit de \(B\), la métrique \(h^T\) co\"{\i}ncide avec \(h\) à multiplication par une constante près. On en déduit que \(W_R\) co\"{\i}ncide avec un multiple de \(W\) dans ce voisinage. On peut donc définir   %dans le domaine de \(\W_R\)-attraction de \(B\) (qui co\"{\i}ncide avec le voisinage de \(\W\)-attraction de \(B\) ainsi qu'avec le domaine de Fatou \(\Fatou(\FF)\) de \(\FF\)), les distributions \(T\FSFo_{\W}\) et \( T\FSFo _{\W_R}\) co\"{\i}ncident, et on peut définir le feuilletage stable fort \(\FSFo_{\W_R}\) pour le champ \(\W_R\) dans \(\Fatou (\FF)\) par 
%\( \FSFo_{\W_R} = \FSFo_{\W}\),
%dans ce même voisinage.  On procède de la même fa\c{c}on qu'au paragraphe \ref{sss: feuilletage stable} pour prolonger le feuilletage \(\FSFo_{\W_R}\) en un feuilletage stable fort de \(\W_R\) sur \(\Ptwostar\).  

Nous notons \(X\) le champ de vecteurs continu sur \( \Ptwostar\) qui est tangent à \( \FSFo_{\W_R}\),  dont la projection dans \(\WF= \TRF / \R W_R\)  est de norme \(1\) vis-à-vis de la métrique  \(\hWF ^T\) et orienté de sorte que le couple \((X, \W_R) \) forme une base directe de \( \TRF\). Nous introduisons la métrique continue \( \hyp\) sur \( \TRF \) comme étant l'unique métrique rendant la base \( X, \W_R\) orthonormale~:
\begin{equation} \label{eq: metrique hyperbolique} \hyp (X,X)= \hyp(\W_R, \W_R) =1 \text{ et }  \hyp (X,\W_R)=0.\end{equation}

\begin{lemme} \label{l: completude hyp} La restriction de \( \hyp \) à toute feuille de \(\FF_{\Ptwostar }\) est complète. 
\end{lemme}

\begin{proof} Comme les métriques \(\hWF ^T\) et \( \hWF\) sont bornées l'une par rapport à l'autre à des constantes multiplicatives près et que l'angle entre \( X\) et \( W_R\) est minoré par une constante strictement positive uniforme, la métrique \(h^T\) et la restriction de \(h\) au feuilletage sont majorées l'une par rapport à l'autre à des constantes multiplicatives strictement positives près. Le résultat découle de ce que la restriction de \(h\) aux feuilles de \(\FF_{\Ptwostar }\) est complète. \end{proof}

Nous définissons le flot \( \Phi_X =\{\Phi_X ^t\} _{t\in \R} \) de la fa\c{c}on suivante : pour tout \(p\in \Ptwostar\),  \( t\mapsto \Phi _ X ^t (p) \) est le paramétrage de classe \(C^1\) de la feuille de \( \FSFo_{\W_R} \) passant par~\(p\), isométrique vis-à-vis de la métrique \(\hyp\), respectant l'orientation et envoyant \(t=0\) sur \(\Phi _X^0 (p)=p\). Il est bien défini pour tout temps d'après le lemme \ref{l: completude hyp}. Par définition de \(\Phi_X\), on a les relations \(\Phi _X ^{t+t'} = \Phi_X^t \circ \Phi_X^{t'} \) pour tous \(t,t'\in \R\), ainsi que  
\begin{equation} \label{eq: derivee du flot} \frac{d\Phi_X^t(p)} {\partial t} _{t=0} = X(p)\text{  pour tout }p .\end{equation} 
D'autre part, \(\Phi_X\) est un flot continu puisque la distance entre \( p\) et \( \Phi^t _X (p)\) est majorée par \(|t|\) pour la métrique \(\hyp\), et donc par une constante fois \(|t|\) pour la métrique \(h\), ce qui montre que \(\Phi _X^t\) converge uniformément vers l'identité sur tout compact. 

La construction de \(\Phi _X\) ainsi que \eqref{eq: expansion apres reparametrage} montrent que les transformations \( \Phi_{W_R} ^t \) échangent les feuilles de \(\FSFo_{\W_R}\) en leur appliquant une contraction (vis-à-vis de \(\hyp\)) d'un facteur \(e^{-t}\) et en préservant leur orientation donnée par \(X\). Nous avons donc les relations
\begin{equation} \label{eq: groupe affine}  \Phi _{W_R} ^t \circ \Phi _X ^ x = \Phi _X ^{e^{-t} x} \circ \Phi _{W_R}  ^t \end{equation}
pour tout \((x,t) \in \R^2\). Nous obtenons alors une action \(\pi : \AF\times \Ptwostar \rightarrow \Ptwostar\) par la formule 
\begin{equation} \label{eq: def pi} \pi ( x, t, p ) = \Phi _{\W_R} ^t \circ \Phi _X ^x (p) \text{ pour tous } (x,t)\in \AF, \ p\in  \Ptwostar.\end{equation} 
Cette action est localement libre d'après \eqref{eq: derivee du flot} (ainsi que la relation analogue pour \( \W_R\) qui est lisse) et \(X\) et \(W_R\) sont linéairement indépendants en tout point.  Les orbites étant contenues dans des feuilles de la restriction de \(\FF\) à \(\Ptwostar\), ce sont donc des ouverts dans ces dernières qui sont connexes. 
\end{proof}

%Pour établir la stabilité structurelle des feuilletages vérifiant  les propriétés   \( \Pr\) et \( \Qr\), nous aurons besoin du résultat suivant qui permet de comprendre la structure des feuilletages instables faibles de \(\W\) dans certaines coordonnées associées à l'action du groupe affine. 

%\begin{lemme}Soit \(\FF\) un feuilletage  de \(\Ptwo\) vérifiant les propriétés   \( \Pr\) et \( \Qr\). Soit \(\tau\subset \Ptwostar\) une section tranverse de \(\FF\). Alors l'application \( \AF \times \tau \rightarrow \Ptwo \) définie par \( (g, p) \mapsto g\cdot p\) est un homéomorphisme local.  \end{lemme}

%\begin{proof} Ceci découle de ce que l'action du groupe affine est localement libre, et que . \end{proof}

\subsection{Structure hyperbolique, structure affine et topologie des feuilles}

Étant donné un point \( p\in \Ptwostar\), la feuille de la restriction de \(\FF\) à \(\Ptwostar\) passant par \(p\) est \(\FF(p) \setminus B\). Notons \(\pi_p: \R ^2 \rightarrow \FF (p)\setminus B\) le paramétrage défini par 
\begin{equation} \label{eq: coordonnees R^2} \pi_p (x,t) := \pi (x, t, p) . \end{equation}

\begin{lemme}\label{l: revetement}
Les applications \(\pi_p\) sont des immersions de classe \(C^1\) qui induisent des revêtements de \(\R^2\) dans \(\FF (p)\setminus B\). \end{lemme} 

\begin{proof} En vertu de \eqref{eq: derivee du flot} et de l'identité analogue pour \(\W_R\) qui est lisse, ainsi que des relations \eqref{eq: groupe affine}, l'application \(\pi_p\) admet des dérivées partielles par rapport à \(x\) et \(t\) égales à 
\[ \frac{\partial \pi_p (x,t)}{\partial x} = e^{-t} X( \pi_p (x,t))  \quad \text{ et } \quad \frac{\partial \pi_p (x,t)}{\partial t}= \W_R (\pi_p (x,t)) .\] 
Ces dernières étant continues et linéairement indépendantes, les applications \(\pi_p \) sont des immersions de classe \(C^1\). Par construction, on a 
\begin{equation} \label{eq: metrique hyperbolique} (\pi _p )^* \hyp = e^{-2t} dx^2 + dt^2 , \end{equation}
et comme la restriction de \(\hyp\) à \(\FF (p) \setminus B\) est complète, \(\pi_p\) est un revêtement.  
\end{proof} 

Notons que si \( q= \Phi_{W_R} ^{t_0} \circ \Phi_X^{x_0} (p)\) est un autre point dans \(\FF (p) \setminus B\), la relation \eqref{eq: groupe affine} montre que 
\[ \pi_q (x,t) =  \pi _p (e^{t_0} x+ x_0, t+t_0).\]
Les transformations du plan \(\R^2\) définies par 
\begin{equation} \label{eq: transformations affines} (x,t) \mapsto (e^{t_0} x + x_0 , t+t_0) \end{equation} 
forment un groupe qui préserve une triple structure géométrique :  la métrique riemannienne \( e^{-2 t} dx^2 + dt^2 \)  
qui est complète et de courbure constante \(-1\), la structure affine de \(\R^2\) ainsi que le champ de vecteur vertical \(\frac{\partial} {\partial t}\). La famille d'applications \(\pi_p\) confèrent donc aux feuilles de \(\FF\) cette triple structure géométrique.

\begin{proposition} \label{p: orbite periodique}
Les feuilles non simplement connexes de \(\FF\)  sont des anneaux qui contiennent une unique trajectoire périodique de \(\W\) dans l'ensemble hyperbolique \(K\). En particulier, les séparatrices de \( \FF\) sont des anneaux. Réciproquement, toute trajectoire périodique  de \(\W\) dans \(K\)  est contenue dans une feuille annulaire de~\(\FF\). 
\end{proposition}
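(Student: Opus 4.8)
Le plan est d'identifier toute feuille non simplement connexe de $\FF$ à un quotient du plan hyperbolique par un sous-groupe discret du groupe affine, puis de classer de tels sous-groupes. Je commencerais par observer qu'une feuille non simplement connexe $L$ de $\FF$ ne rencontre ni $B$ ni $\sing$ : une feuille rencontrant $B$ est une fibre du fibré en disques du théorème~\ref{t: existence domaine errant}, donc un disque, et une feuille rencontrant $U_B$ est l'une de ces fibres (lemme~\ref{l: construction de UB}), tandis que les singularités n'appartiennent à aucune feuille. Ainsi $L\subset\Ptwostar$ et $L=\FF(p)\setminus B$ pour $p\in L$, de sorte que le lemme~\ref{l: revetement} fournit un revêtement universel $\pi_p:\R^2\to L$ dont le groupe $\Gamma=\pi_1(L)$ est un sous-groupe discret sans torsion de $\AF$, agissant par les isométries \eqref{eq: transformations affines} du plan hyperbolique $\H$ réalisé comme $\R^2$ muni de la métrique $e^{-2t}dx^2+dt^2$. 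Toutes ces isométries fixant le point $\infty$ du bord de $\H$, le groupe $\Gamma$ est élémentaire.

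Je classerais ensuite ces sous-groupes via l'homomorphisme $\tau:\AF\to\R$, $(x,t)\mapsto t$. Si $\tau(\Gamma)=0$, alors $\Gamma$ est un sous-groupe discret du groupe des translations $\{(x,0)\}\cong\R$, donc trivial ou engendré par un élément parabolique. Si $\tau(\Gamma)\neq0$, un calcul de conjugaison montre que $\Gamma\cap\ker\tau$ est trivial — sinon la conjugaison par un élément de $\tau$-image non nulle contracte un élément parabolique de $\Gamma$, contredisant la discrétude — donc $\tau$ est injectif sur $\Gamma$ ; il s'ensuit que $[\Gamma,\Gamma]\subset\Gamma\cap\ker\tau=\{1\}$, que $\Gamma$ est abélien et que ses éléments possèdent un point fixe commun, si bien qu'après conjugaison $\Gamma$ est un sous-groupe discret du groupe des dilatations, donc monogène engendré par un élément hyperbolique. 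Dans tous les cas non triviaux $\Gamma\cong\Z$ et $L=\H/\Gamma$ est un anneau. Comme les orbites de $\W_R$ sont, dans les coordonnées \eqref{eq: coordonnees R^2}, les géodésiques verticales, une orbite périodique de $\W$ dans $L$ correspond à une telle géodésique invariante par un élément non trivial de $\Gamma$, c'est-à-dire à l'axe d'un élément hyperbolique ; il y en a donc exactement une lorsque $\Gamma$ est hyperbolique, et aucune lorsque $\Gamma$ est parabolique.

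L'étape principale, et le véritable obstacle, est d'exclure le cas parabolique. Dans ce cas, un générateur de $\pi_1(L)$ est représenté par un lacet horizontal $\alpha=\{(x,t_0):0\le x\le c\}$ des coordonnées \eqref{eq: coordonnees R^2}, tangent au champ $X$, c'est-à-dire à la distribution stable forte $T\FSFo_{\W_R}$. D'après la proposition~\ref{p: distributions stables et instables} (estimée \eqref{eq: estimation distribution stable forte}) et la remarque~\ref{r: reparametrage}, le flot $\Phi_{\W_R}^T$ contracte exponentiellement la longueur ambiante de $\alpha$, si bien que les lacets $\Phi_{\W_R}^T(\alpha)$, qui restent non contractiles dans $L$, ont un diamètre tendant vers $0$ et s'accumulent, par compacité de $\Ptwo$, sur un point $q^*$. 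J'examinerais alors trois cas. Si $q^*\in\Ptwostar$ est régulier, les lacets $\Phi_{\W_R}^T(\alpha)$ finissent par être contenus dans une plaque de carte feuilletée, donc contractiles dans $L$ : contradiction. Le cas $q^*\in B$ est exclu puisque $L$ évite le voisinage $U_B$. Enfin, si $q^*\in\sing$, le fait que les singularités soient des sources (propriété~\textbf{\(\Qr\)}) et que le flot soit sortant sur $\partial U_s$ (lemme~\ref{l: construction de US}) interdit à une suite $\Phi_{\W_R}^{T}(a)$ avec $T\to+\infty$ de s'accumuler en $q^*$. Le générateur de $\Gamma$ est donc hyperbolique, et $L$ contient l'unique orbite périodique image de son axe ; cette orbite, compacte et $\Phi_\W$-invariante, ne converge ni vers $B$ dans le futur ni vers $\sing$ dans le passé, évite donc $\Att_\W(B)$ et $\Rep_\W(\sing)$ et est contenue dans $K$.

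Il resterait à traiter les séparatrices et la réciproque. Une séparatrice de $\FF$ associée à une singularité $s$ est, au voisinage de $s$, un disque épointé en $s$, donc une feuille non simplement connexe, et par conséquent un anneau d'après ce qui précède. Réciproquement, une orbite périodique de $\W$ contenue dans $K$ est portée par une feuille $L$ de $\FF$ ; cette feuille ne peut rencontrer $B$ (sinon ce serait un disque de Fatou, sur lequel $\W$ n'admet pas d'orbite périodique), donc $L=\H/\Gamma$ comme ci-dessus, et l'orbite périodique fournit un élément hyperbolique de $\Gamma$, qui est ainsi non trivial : $L$ est un anneau non simplement connexe. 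J'anticipe que toute la difficulté réside dans l'exclusion du cas parabolique, les autres étapes découlant de la classification élémentaire des sous-groupes discrets de $\AF$ et des propriétés globales déjà établies du flot $\Phi_\W$.
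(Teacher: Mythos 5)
Votre démonstration est correcte et suit le même squelette que celle de l'article : revêtement \(\pi_p\) du lemme~\ref{l: revetement}, classification des sous-groupes discrets de \(\AF\) agissant librement et proprement, correspondance entre point fixe du générateur et orbite périodique (la verticale \(x_1\times\R\)), et réciproque par relèvement d'une orbite périodique en une droite verticale. En revanche, l'étape cruciale — exclure un générateur parabolique, c'est-à-dire \(t_0=0\) — est traitée par un argument réellement différent. L'article observe que le rayon d'injectivité des feuilles de l'ensemble de Julia, pour la métrique feuilletée \(\hyp\), est uniformément minoré (minoration sur les régions compactes, et le rayon tend vers l'infini près de \(\sing\) grâce à la relation \eqref{eq: groupe affine}) ; un générateur parabolique forcerait ce rayon à tendre vers \(0\) le long de la feuille, d'où \(t_0\neq 0\). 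Vous utilisez au contraire l'hyperbolicité : le lacet générateur est tangent à la direction stable forte, donc contracté exponentiellement par \(\Phi_{\W_R}^T\) (prop.~\ref{p: distributions stables et instables} et remarque~\ref{r: reparametrage}), et vous excluez chacun des lieux d'accumulation possibles (point régulier par l'argument de plaque, \(B\) par l'évitement de \(U_B\), \(\sing\) par le caractère sortant du flot sur \(\partial U_s\)). Notez que c'est bien l'évitement de \(U_B\) qui rend l'exclusion légitime : pour les feuilles de Fatou épointées \(\Att_\W(b)\setminus\{b\}\), le groupe de revêtement \emph{est} parabolique, donc aucun argument purement local ne peut suffire. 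Votre approche est plus dynamique et rend explicite ce que l'article condense dans l'affirmation sur le rayon d'injectivité ; en échange, elle demande deux vérifications que vous passez sous silence : la comparaison entre la longueur pour la métrique adaptée \(h\) (celle des estimées stables) et le diamètre pour la métrique ambiante de \(\Ptwo\) (valable sur les directions tangentes à \(\FF\) hors de \(U_B\), mais à justifier), et la continuité du temps de sortie de \(U_s\) pour conclure proprement le cas \(q^*\in\sing\). Vous traitez en outre deux points que l'article laisse implicites, l'appartenance de l'orbite périodique à \(K\) et le cas des séparatrices ; sur ce dernier, votre justification (\og disque épointé au voisinage de \(s\), donc feuille non simplement connexe \fg{}) est incomplète telle quelle : un voisinage épointé ne suffit pas, il faut invoquer l'holonomie du lacet méridien, hyperbolique donc non triviale puisque la singularité est hyperbolique, pour garantir que ce lacet n'est pas contractile dans la feuille.
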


\begin{proof} 
Comme l'ensemble de Fatou est égal à l'ensemble errant \(D\), dans lequel toutes les feuilles de \(\FF\) sont simplement connexes, une feuille non simplement connexe est contenue dans l'ensemble de Julia. 

Notons que le rayon d'injectivité des feuilles de la restriction de \(\FF\) à l'ensemble de Julia munies de la métrique \(\hyp\) est uniformément minoré. En effet, il est minoré sur toute région compacte et la relation \eqref{eq: groupe affine} permet de voir que ce rayon tend vers l'infini lorsque l'on se rapproche de l'ensemble singulier. 

Soit \(p\) un point régulier de l'ensemble de Julia. La feuille \(\FF(p)\) de \(\FF\) passant par \(p\) ne rencontre pas la surface \(B\). Le lemme \ref{l: revetement} montre que \(\FF(p)\) est isomorphe (munie de sa triple structure géométrique) au quotient de \( \R^2 \) par un sous-groupe du groupe des transformations \eqref{eq: transformations affines} qui agit librement, proprement et  discontin\^ument. Un tel groupe est nécessairement cyclique, engendré par une transformation \( (x,t) \mapsto (e^{t_0} x+ x_0, t+t_0)\). Observons que \( t_0\neq 0\) car le rayon d'injectivité des feuilles pour la métrique \( \hyp \) est uniformément minoré par une constante strictement positive.  Si l'on désigne par \(x_1\in \R\) le point fixe de \( x\mapsto e^{t_0} x+x_0\), la verticale \( x_1 \times \R\) se projette via \(\pi_p\) sur une orbite périodique de \( \Phi_{W_R}\) ; de plus, toute autre trajectoire périodique est de cette forme. Ainsi, toute feuille non simplement connexe est annulaire et contient une unique orbite périodique non constante de \(\W_R\). 

Réciproquement, toute orbite périodique de \( \W_R\) passant par un point \(p\) se relève via le revêtement \(\pi_p\) en la courbe verticale \( 0\times \R\). L'application \(\pi_p\) est donc non injective et la feuille passant par \(p\) est non simplement connexe, donc annulaire comme nous l'avons vu.  

La proposition \eqref{p: orbite periodique} découle alors du fait que  \(\W\) et \(\W_R\) ont les mêmes orbites. \end{proof}

\subsection{Conjecture d'Anosov} \label{ss: conjecture Anosov}

Nous terminons cette partie en établissant que les feuilletages satisfaisant les propriétés \( \Pr\) et \( \Qr\) vérifient la conjecture d'Anosov.

\begin{theoreme}\label{t: conjecture Anosov}
Soit \( \FF\) un feuilletage algébrique de \(\Ptwo\) satisfaisant les propriétés~\( \Pr\) et~\( \Qr\). Alors, toutes les feuilles de \(\FF\) sont simplement connexes, sauf un nombre dénombrable qui sont des anneaux d'holonomie hyperbolique. 
\end{theoreme}

\begin{proof}
Comme l'ensemble hyperbolique \(K\) ne contient qu'un nombre dénombrable d'orbites périodiques pour \(\W\) (corollaire~\ref{cor:nombre-denombrable-orbites-periodiques}), la proposition \ref{p: orbite periodique} montre qu'il n'y a qu'un nombre dénombrable de feuilles non simplement connexes. D'autre part, chaque feuille annulaire contient une orbite périodique non constante du champ \(\W\) qui est contenue dans \(K\). D'après le corollaire \ref{c: decroissance exponentielle des derivees transverses}, l'holonomie de~\(\FF\) le long de cette orbite périodique est un  germe de transformation dilatante, donc hyperbolique.   
%Toute feuille \( (F, h_H)\) de \( \FF_{|\Ptwostar} \) est une surface riemannienne complète localement modelée sur \( (\R^2, e^{-2t} dx^2 + dt^2)\). Elle est donc le quotient de \( \R^2\) par un sous-groupe du groupe des transformations \eqref{eq: changements de cartes}, qui agit librement, proprement et  discontin\^ument. Les seuls sous-groupes de ce type étant cycliques, \( F\) est donc un anneau ou un disque. Si \((F, h_H)\) est contenue dans le domaine de Fatou de \(\FF\), alors c'est un anneau qui est le quotient de \((\R^2, e^{-2t} dx^2 + dt^2) \) par une transformation parabolique du type \( (x,t) \mapsto (x+x_0, t)\). La feuille correspondante pour le feuilletage \(\FF\) est obtenue à partir de \(F\) en lui adjoignant un point de \(B\), et elle devient ainsi simplement connexe (voir théorème \ref{t: existence domaine errant}). Si par contre  \( (F, h_H)\) est contenue dans l'ensemble de Julia de \(\FF\), alors son rayon d'injectivité est strictement positif (voir remarque \ref{r: vasque hyperbolique}), et par conséquent elle est ou bien simplement connexe, ou bien le quotient de \((\R^2, e^{-2t} dx^2 + dt^2) \) par une isométrie hyperbolique de la forme \( (x,t)\mapsto (e^{t_0} x, t+t_0)\). Dans ce cas, elle est également une feuille de \(\FF\). Une telle feuille contient une unique trajectoire périodique de \(W\) qui est le quotient de la courbe \( 0 \times\R\). Elle est hyperbolique en vertu du lemme \ref{l: hyperbolicite}.
\end{proof}

%\begin{remarque} Le nombre de feuilles annulaires de \(\FF\) (ou de fa\c{c}on \?equivalente, le nombre d'orbite périodique de \(W\)) est infini. En effet, dans le cas contraire, l'ensemble \(K\) ne serait constitué que d'un nombre fini d'orbites périodiques, puisque les orbites périodiques sont denses dans \(K\). L'ensemble de Julia de \(\FF\) ne contiendrait qu'un nombre fini de feuilles, puisque le saturé de \(K\) par \(\FF\) est l'ensemble de Julia (cf le troisième paragraphe de la démonstration du corollaire 7.4), et serait donc transversalement dénombrable. Or, l'ensemble de Julia de \(\FF\) contient un ensemble pseudo-minimal \(M\). Les transversales de \(M\) ne sont pas des ensembles discrets (car si c'était le cas \(M\) serait un ensemble analytique). En particulier, ce sont des ensembles parfaits, qui sont donc non dénombrables, ce qui fournirait une contradiction.\end{remarque}

%%%%%%%%%%%%%%%%%%%%%%
\section{Stabilité structurelle} \label{s: stabilite structurelle} %%
%%%%%%%%%%%
%%%%%%%%%%%

Dans cette partie nous démontrons le théorème de stabilité structurelle.

\begin{theoreme} \label{t:stabilite-structurelle-globale}
Soit \(\FF\) un feuilletage algébrique complexe de \(\Ptwo\) de degré \(d\) satisfaisant les propriétés \(\Pr\) et \(\Qr\). Alors, tout feuilletage algébrique complexe de degré \(d\) suffisamment proche de \(\FF\) lui est topologiquement conjugué. 
\end{theoreme} 

Nous utiliserons la topologie sur l'espace des feuilletages algébriques complexes de \(\Ptwo\) définie par le quotient de la topologie naturelle sur l'espace des champs de vecteurs homogènes (non nuls) de degré \(d\) sur \(\C^3\) : deux feuilletages \(\FF\) et \(\FF'\) sont proches s'ils peuvent être définis par des champs de vecteurs homogènes de degré \(d\) sur \(\C^3\) qui sont proches.

\subsection{Stabilité structurelle de l'ensemble de Julia}  \label{ss: stabilite Julia}
Nous établissons que l'ensemble de Julia de \(\FF\) est structurellement stable dans le sens suivant : si \(\FF'\) est un feuilletage algébrique complexe de degré \(d\) suffisamment proche de \(\FF\), son ensemble de Julia est topologiquement conjugué à celui de \( \FF\). 

On rappelle que \(V\) désigne un champ de vecteurs homogène sur \({\bf C}^3 \setminus \{0\}\) de degré \(d\) qui ne s'annule pas, de divergence nulle tel que la projectivisation de \(\FF _V\) est le feuilletage \(\FF\). De plus, si \( V'\) est un autre champ (de degré~\(d\) qui ne s'annule pas et de divergence nulle), on notera \(\FF'\) le feuilletage de \(\Ptwo\) induit par~\(V'\),  \(\W' \) le champ défini au paragraphe \ref{ss: definition de W} qui est associé à \(V'\), \(B'\) l'ensemble des singularités de \(W'\) le long des feuilles de \(\FF'\), etc.

\begin{proposition}\label{p: stabilite structurelle W} Il existe un voisinage de \(V\) dans l'espace des champs de vecteurs homogènes de degré \(d\) qui ne s'annulent pas et de divergence nulle sur~\(\C^3\) tel que pour tout~\(V'\) dans ce voisinage, il existe un homéomorphisme \( \psi : \Ptwo\setminus B \rightarrow \Ptwo\setminus B'\) qui conjugue orbitalement les flots \(\Phi_{\W}\) et \(\Phi_{\W'} \). De plus, \(\psi \) converge vers l'identité (pour la topologie compacte ouverte) lorsque \(V'\) tend vers~\(V\).
\end{proposition}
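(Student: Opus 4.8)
The plan is to deduce the orbital structural stability of the real field $\W$ from the hyperbolicity of the maximal invariant set $K$ (Proposition~\ref{p: K est hyperbolique}) by means of a structural stability theorem of Robinson~\cite{Robinson}, and then to propagate the resulting orbital conjugacy through the two ends of $\Ptwostar$ --- the attracting collar around $B$ and the repelling collars around the points of $\sing$ --- using the product normal forms of Lemmas~\ref{l: construction de UB} and~\ref{l: construction de US}. The key point is that outside a compact \emph{isolating block} the flow is a trivial product, so Robinson's theorem need only be applied on that block.

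First I would record that $\Pr$ and $\Qr$ are open conditions, so that for $V'$ close enough to $V$ the field $V'$ again satisfies both, the surface $B'$ and the singular set $\sing'$ depend continuously on $V'$ (with $B'$ isotopic to $B$ and $\sing'$ close to $\sing$), and, since the assignment $V \mapsto \W$ of \S\ref{ss: definition de W} is continuous and $V,V'$ do not vanish on $\C^3\setminus\{0\}$, the field $\W'$ converges to $\W$ in the $C^1$ topology uniformly on $\Ptwo$. I would then fix once and for all the compact region $N_0 := \Ptwo \setminus (\text{Int}(U_B)\cup \text{Int}(U_S))$ built from $\W$: its boundary $\partial U_S$ is an entrance and $\partial U_B$ an exit, both transverse to the flow, and its maximal invariant set is exactly $K$. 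By openness of the isolating-block condition, $N_0$ remains an isolating block for $\W'$, with a nearby hyperbolic maximal invariant set $K'$. The strong transversality hypothesis needed for Robinson's theorem is then supplied by Proposition~\ref{p: feuilletages stables et instables}: the weak stable foliation is $\FF$ (real dimension $2$) and the weak unstable foliation $\FIFa_\W$ (real dimension $3$) are everywhere transverse, so the invariant manifolds of orbits of $K$ meet transversally along the flow direction.

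With these hypotheses in hand I would apply Robinson's structural stability theorem for the isolated hyperbolic set $K$ to obtain an orbital homeomorphism $\psi_0 : N_0 \to N_0$, close to the identity, conjugating $\Phi_\W$ to $\Phi_{\W'}$; being an orbital conjugacy of flows transverse to $\partial N_0$, it preserves the boundary setwise, hence restricts to self-homeomorphisms of $\partial U_B$ and of $\partial U_S$. It remains to extend $\psi_0$ across the two collars. On $U_B \setminus B \cong \R^{\geq 0} \times \partial U_B$ both $\W$ and $\W'$ equal $\tfrac{\partial}{\partial t}$ (Lemma~\ref{l: construction de UB}, using the \emph{same} fixed $U_B$ for $\W'$), so I would extend $\psi$ along the flow lines using $\psi_0|_{\partial U_B}$; each orbit escaping to $B$ is sent to an orbit escaping to $B'$, which is exactly why $\psi$ is defined on $\Ptwo\setminus B$ but does not extend across $B$. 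Near each source $s$, the field $\W$ is the real part of a linearisable holomorphic field with eigenvalues of positive real part (Lemma~\ref{l: construction de US} and property~$\Qr$), so $\partial U_s$ is a global cross-section of the orbits in $U_s\setminus\{s\}$ for both $\W$ and $\W'$; I would cone off $\psi_0|_{\partial U_s}$ along the flow, setting $\psi(s)=s'$, which realises the classical orbital conjugacy of two hyperbolic sources.

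The final step is to check that the three locally defined pieces glue into a single homeomorphism $\psi : \Ptwo\setminus B \to \Ptwo\setminus B'$ conjugating the flows orbitally, and that $\psi \to \mathrm{id}$ in the compact-open topology as $V'\to V$ (this last point follows because the Robinson conjugacy on $N_0$ can be taken $C^0$-close to the identity, and equal to it when $V'=V$, while the collar and coning extensions preserve $C^0$-closeness). The main obstacle, and where genuine care is required, is precisely the non-compactness of $\Ptwostar$ together with the fact that the distinguished sets $B$ and $\sing$ move under perturbation: one must organise the isolating block so that a single fixed $N_0$ serves for all nearby $\W'$, match the induced boundary homeomorphisms so that the three conjugacies agree on the overlaps $\partial U_B$ and $\partial U_S$, and keep track of continuity in $V'$ throughout the gluing.
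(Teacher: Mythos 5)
Your proposal follows essentially the same route as the paper's proof: apply Robinson's structural stability theorem for flows on a manifold with boundary to the compact block $M=\Ptwo\setminus\mathrm{Int}(U_B\cup U_S)$ (whose chain-recurrent set lies in the hyperbolic set $K$, with strong transversality supplied by the stable/unstable foliations of \S\ref{ss: feuilletages stables et instables}), then propagate the resulting boundary conjugacy through the collars $U_B\setminus B$ and $U_S\setminus\sing$ along the flow lines, using that the singularities are sources to extend continuously to $\sing$ while $B$ remains excluded. The paper's argument is exactly this, including the normalisation $U_{B'}=U_B$, $U_{S'}=U_S$ for $V'$ close to $V$, so your plan is correct and matches the paper's proof in structure and in all key steps.
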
 

\begin{proof} Rappelons que nous avons noté \(U_B\) (resp. \(U_S\)) un voisinage de~\(B\) (resp. de \(\sing\)) dont le bord est transverse à \(\W\) et sur lequel \(\FF\) est un fibré lisse localement trivial en disques au-dessus de \(B\) (resp. un voisinage de linéarisation de \(\sing\), lemmes \ref{l: construction de UB} et \ref{l: construction de US}). Si \(V'\) est choisi suffisamment proche de \(V\), on pourra supposer que \( U_{B'}= U_B\) et que \( U_{S'} = U_S\) ; on aura en particulier les inclusions \( B' \subset U_B \) et \( \sing ' \subset U_S\), et le fait que \(\W '\) est transverse sortant à \(\partial U_S\) et transverse rentrant à \(\partial U_B\).  

Nous allons appliquer le théorème \cite[Theorem C, p. 3]{Robinson} de Robinson pour montrer que la restriction de \(\Phi_{\W}\) à la variété à bord \(M= \Ptwo \setminus \text{Int}( U_B \cup U_S) \) est structurellement stable. Comme le champ \(W\) est transverse à \(\partial M\), il suffit de montrer que l'ensemble récurrent par cha\^{\i}ne est hyperbolique et que les variétés instables fortes de ce dernier intersectent les variétés stables faibles transversalement. Or, ou bien les trajectoires de \(\Phi_{\W}\) dans \(M \) sont contenues dans \(K\), ou bien elles intersectent le bord de \(M\). Donc l'ensemble récurrent par cha\^{\i}ne est contenu dans \(K\) ; il est ainsi hyperbolique d'après la proposition \ref{p: K est hyperbolique}. De plus,  les feuilles stables faibles des \(\W\)-trajectoires sont les feuilles de \(\FF\) puisque \(\FF\) est le feuilletage stable faible pour la métrique \(h\) (\(\mathsection~\ref{ss: feuilletages stables et instables}\)). Quant aux feuilles instables fortes de \(\W\)-trajectoires, ce sont des sections transverses du feuilletage~\(\FF\) (\(\mathsection~\ref{ss: feuilletages stables et instables}\)) : en particulier, l'hypothèse de transversalité est donc bien satisfaite et le flot \(\Phi_{\W} \) est orbitalement structurellement stable sur \(M\). On pourra donc trouver un homéomorphisme \(\psi : M\rightarrow M\) qui envoie les trajectoires de \(\Phi_{\W}\) sur celles de \(\Phi_{\W'}\). On étend \(\psi\) à un homéomorphisme de \(\Ptwostar \) dans \(\Ptwostarprime \) en posant pour tout \(t\geq 0\) 
\[ \psi (\Phi_{\W}^t p) = \Phi_{\W'}^t( \psi(p) ) \quad \text{ si } \ p\in \partial U_B \] 
et 
\[ \psi (\Phi_{\W}^{-t} p) = \Phi_{\W'}^{-t}( \psi(p) ) \quad \text{ si } \ p\in \partial U_S .\]
L'homéomorphisme \(\psi\) défini de cette fa\c{c}on se prolonge par continuité à l'ensemble~\(\sing\) en un homéomorphisme \( \psi : \Ptwo\setminus B \rightarrow \Ptwo \setminus B'\) qui conjugue orbitalement \(\Phi_{\W} \) et \(\Phi_{\W'}\). Cela est d\^u au fait que les singularités de \(\FF\) (resp. de \(\FF'\)) sont des sources pour \(W\) (resp. \(W'\)). 
\end{proof}

\begin{lemme}\label{l: feuilles intersectant K}
Les feuilles de \(\FF\) qui intersectent \(K\) sont exactement les feuilles de l'ensemble de Julia de \( \FF\).
\end{lemme}

 \begin{proof} D'abord, par définition de \(K\), une feuille de l'ensemble de Fatou n'intersecte pas \(K\). D'autre part, soit  \(F\) une feuille de \(\FF\) contenue dans  \(\Julia(\FF)\). Nous allons montrer que \(F\) intersecte \(K\). En effet, si \(F\) ne contient pas de singularité dans son adhérence, alors elle est contenue dans \(K\). Sinon,  \(F\) intersecte le domaine de linéarisation de la singularité et son adhérence contient donc une séparatrice. Cette séparatrice n'étant pas simplement connexe, elle contient une unique orbite périodique \(o\) du flot \(\Phi_{\W} \) (proposition \ref{p: orbite periodique}). La variété instable faible de cette orbite périodique intersecte la feuille \(F\) suivant une orbite de \(\W\) qui converge en temps positif vers \(K\) (car elle est contenue dans \(\Julia (\FF)\)) et en temps négatif vers \(o\subset K\). Cette orbite est nécessairement dans \(K\).\end{proof}

\begin{corollaire}\label{c: stabilite structurelle ensemble de Julia}
L'ensemble de Julia de \(\FF\) est structurellement stable. Plus précisément, l'homéomorphisme \( \psi \) construit à la proposition \ref{p: stabilite structurelle W} induit par restriction un homéomorphisme \( \psi : (\Julia (\FF) , \FF) \rightarrow (\Julia (\FF'), \FF')\).
\end{corollaire}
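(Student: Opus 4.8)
The plan is to leverage the orbit conjugacy $\psi : \Ptwo\setminus B \rightarrow \Ptwo\setminus B'$ furnished by Proposition~\ref{p: stabilite structurelle W} together with the dynamical description of the Julia set. First I would note that $\psi$ is defined on all of $\Julia(\FF)$: since $B\subset D=\Att_\W(B)$ lies in the Fatou set, the Julia set $\Julia(\FF)=\Ptwo\setminus D$ avoids $B$, and by the identifications of \(\mathsection\)~\ref{ss: ensemble hyperbolique} we may write $\Julia(\FF)=\Att_\W(K)\cup\sing$ with $\Att_\W(K)=\Julia(\FF)\setminus\sing$. The goal is then to show that $\psi(\Julia(\FF))=\Julia(\FF')$ and that $\psi$ carries the leaves of $\FF$ onto those of $\FF'$.

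The second step is to transport the hyperbolic set. The conjugacy $\psi$ was obtained from Robinson's theorem applied to $\Phi_\W$ on $M=\Ptwo\setminus\text{Int}(U_B\cup U_S)$, whose chain recurrent set is exactly $K$ (proof of Proposition~\ref{p: stabilite structurelle W}); an orbit conjugacy maps chain recurrent sets to chain recurrent sets, so $\psi(K)=K'$. Because $\psi$ conjugates the orbits of $\Phi_\W$ to those of $\Phi_{\W'}$, it sends attraction sets to attraction sets, whence $\psi(\Att_\W(K))=\Att_{\W'}(\psi(K))=\Att_{\W'}(K')$. As $\psi$ extends continuously to $\sing$ with $\psi(\sing)=\sing'$ (Proposition~\ref{p: stabilite structurelle W}), we get $\psi(\Julia(\FF))=\Att_{\W'}(K')\cup\sing'=\Julia(\FF')$, and the restriction is a homeomorphism onto its image since $\psi$ is one.

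The third step is to check that $\psi$ respects the foliations. On $\Ptwostar$ the leaves of $\FF$ are precisely the weak stable leaves $\FSFa_\W=\FF$ (\(\mathsection\)~\ref{ss: feuilletages stables et instables}), and two points lie on the same weak stable leaf iff the forward $\Phi_\W$-orbit of one converges to the forward orbit of the other (up to a time shift). Since $\psi$ conjugates orbits and satisfies $\psi(K)=K'$, this relation is preserved, so $\psi$ sends weak stable leaves of $\W$ onto weak stable leaves of $\W'$, i.e. maps $\FF|_{\Ptwostar}$-leaves to $\FF'|_{\Ptwostarprime}$-leaves. By Lemma~\ref{l: feuilles intersectant K} a Julia leaf meets $K$, avoids $B$, and its points near $\sing$ lie on separatrices (Proposition~\ref{p: orbite periodique}); its trace on $\Ptwostar$ is one of the above weak stable leaves. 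As $\psi$ is continuous up to $\sing$ and conjugates the source dynamics there, it carries each full Julia leaf of $\FF$ onto a full Julia leaf of $\FF'$, so $\psi : (\Julia(\FF),\FF)\rightarrow(\Julia(\FF'),\FF')$ is a homeomorphism of foliated spaces.

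The main obstacle is this last step: confirming that the orbit conjugacy genuinely preserves the weak stable foliation globally and matches up compatibly across the singular points, where the leaf structure degenerates into separatrices carrying hyperbolic holonomy (annular leaves, Proposition~\ref{p: orbite periodique}). This rests on the weak stable foliation being intrinsic to the $\W$-dynamics, on Robinson's conjugacy respecting the invariant weak stable/unstable foliations of the hyperbolic set $K$, and, near $\sing$, on the fact that $\psi$ converges to the identity as $V'\to V$ and conjugates the source at each singularity, so that the local separatrix picture of $\FF$ is sent to that of $\FF'$.
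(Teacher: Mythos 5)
Your proposal follows the same route as the paper's proof: transport \(K\) and \(\Att_\W(K)\) by the orbit conjugacy \(\psi\), use Corollary~\ref{c: union varietes stables faibles} to see \(\Att_\W(K)\) as a disjoint union of weak stable leaves, identify the weak stable foliation of \(\W\) with \(\FF\) (Proposition~\ref{p: feuilletages stables et instables}), and conclude with Lemma~\ref{l: feuilles intersectant K}. The one place where you genuinely deviate from the paper is also the place where your argument has a gap.

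You justify \(\psi(K)=K'\) by asserting that the chain recurrent set of \(\Phi_\W\) on \(M=\Ptwo\setminus \text{Int}(U_B\cup U_S)\) is \emph{exactly} \(K\), citing the proof of Proposition~\ref{p: stabilite structurelle W}. That proof only establishes that the chain recurrent set is \emph{contained} in \(K\), which is all that Robinson's theorem requires; equality is neither claimed nor automatic, since a maximal invariant hyperbolic set can contain orbits that are not chain recurrent (for instance connecting orbits between pieces of the chain recurrent set, in the absence of cycles). Because your identification \(\Julia(\FF)\setminus\sing=\Att_\W(K)\) is itself transported through \(\psi(K)=K'\), this flaw undermines your whole second step, not just a side remark. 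The repair is immediate and is what the paper does: \(\psi\) sends \(\partial U_B\) to itself and conjugates orbits, hence sends the \(\Phi_\W\)-saturation of \(\partial U_B\) --- the Fatou set deprived of \(B\) --- onto the corresponding set for \(\FF'\), and likewise sends \(\Rep_\W(\sing)\) onto \(\Rep_{\W'}(\sing')\); since \(K\) is by definition the complement of \(\Att_\W(B)\cup\Rep_\W(\sing)\) (équation~\eqref{eq: ensemble hyperbolique}), this yields both \(\psi(\Julia(\FF))=\Julia(\FF')\) and \(\psi(K)=K'\) at once. (Equivalently, \(K\) is the maximal \(\Phi_\W\)-invariant subset of \(M\), a notion preserved by any orbit conjugacy of \(M\).) With that substitution, your third step --- preservation of the asymptotic-orbit relation defining weak stable leaves, hence of the leaves meeting \(K\) --- is exactly the paper's conclusion, and you state it at essentially the same level of detail as the paper does.
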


\begin{proof}
Le saturé de \(\partial U_B\) par \(\Phi_{\W} \) (resp. \(\Phi_{\W'} \)) est l'ensemble de Fatou de \(\FF\) (resp. celui de \(\FF'\)). Comme l'homéomorphisme \(\psi \) envoie \(\partial U_B\) sur lui-même et qu'il conjugue orbitalement \(\Phi_{\W}\) à \(\Phi_{\W'}\), il envoie l'ensemble de Fatou de \(\FF\) sur celui de \(\FF'\) et, par conséquent, l'ensemble de Julia de \(\FF\) sur celui de \(\FF'\).

De même, \(\psi \) envoie \(\Rep  _\W(\sing) \) sur \(\Rep _\W(\sing ' )\). En particulier, il envoie~\(K\) sur~\(K'\). Comme il conjugue orbitalement \(\Phi_\W \) à \(\Phi_{\W'}\) en préservant l'orientation des flots, il envoie également l'ensemble de \(\W\)-attraction de \(K\) (éq. \ref{eq: W repulsion de K}) sur l'ensemble de \(\W'\)-attraction  de \(K'\). D'après le corollaire \ref{c: union varietes stables faibles}, les ensembles  \(\Att _{\W} (K) \) et \(\Att_{\W'}(K') \) sont les unions disjointes des feuilles stables faibles des points de \(K\) et de \(K'\) pour \(\W \) et \(\W'\) respectivement.  Les feuilletages stables faibles de \(\W\) et de \(\W'\) étant les feuilletages  \(\FF\) et \(\FF'\) respectivement d'après la proposition \ref{p: feuilletages stables et instables}, la conjugaison \(\psi \) envoie chaque feuille de \(\FF\) qui intersecte \(K\) sur une feuille de \(\FF'\) qui intersecte \(K'\). Le lemme \ref{l: feuilles intersectant K} achève la démonstation du corollaire. \end{proof}

\subsection{Intermède : les bandes} \label{ss: bandes}

Pour construire une conjugaison globale, nous aurons besoin d'étudier la restriction du feuilletage \(\FF\) (ainsi que des structures géométriques sur ces feuilles induites par les coordonnées \(\pi_p\), éq.~\ref{eq: coordonnees R^2}) à l'ensemble de \(\W\)-répulsion du lieu singulier de \(\FF\) (ou ce qui revient au même, au complémentaire de l'ensemble de \(\W\)-répulsion de l'ensemble hyperbolique \(K\)) : nous appellerons \textit{bande} une feuille de la restriction de \(\FF\) à cet ensemble.

\begin{lemme} \label{l: topologie des bandes}
Soit \(s\in \sing\). Les deux feuilles annulaires de la restriction de \(\FF\) à \(\text{Int} (U_s)\)  (lemme \ref{l: construction de US}) sont contenues dans des bandes annulaires. %Chacun des deux germes de séparatrice de \(\FF\) en \(s\) est contenu dans une bande annulaire. 
Les autres bandes de \(\Rep_W (s)\) sont simplement connexes et accumulent sur les deux bandes annulaires. L'union des bandes simplement connexes contenues dans \(\Rep_W (s)\) est un ouvert sur lequel le feuilletage est un fibré lisse topologiquement trivial par disques au-dessus de la courbe elliptique \( E_s\).
\end{lemme}

\begin{proof}
Comme les trajectoires de \(\W\) dans \(U_s\) tendent en \(-\infty\) vers un point de \(\sing\) et sortent transversalement à \(\partial U_s\) en temps fini, la restriction du feuilletage \(\FF\) à l'ensemble \(\Rep_W (s)\) est le produit de l'intersection du feuilletage~\( \FF\) avec \(\partial U_s\) avec la droite réelle. Le résultat découle alors du lemme \ref{l: construction de US}.
\end{proof}

Étant donné \(s\in \sing\), nous notons \( \beta ^ \pm (s) \) les deux bandes annulaires contenues dans \(\Rep _W (s)\). Le signe est déterminé par la propriété suivante : l'holonomie de~\(\FF\) le long d'un lacet de \(\beta ^+ (s) \) (resp. \(\beta^-(s) \)) d'indice positif vis-à-vis de \(s\) est dilatante (resp. contractante).

\begin{definition} Soit \(\beta\) une bande et \(p\in \beta\). La composante connexe de \(\pi_p ^{-1} (\beta)\subset \R^2\) contenant le point \((0,0)\) est un ouvert connexe invariant par le champ \(\frac{\partial}{\partial t}\) dans les coordonnées \((x,t)\in \R^2\) (car l'image de ce dernier par la différentielle de \(\pi_p\) est le champ \(\W_R\) qui laisse invariant \(\beta\)) ; elle est donc de la forme \( I_p \times \R\) où \(I_p\) est un invervalle ouvert contenant \(0\). \end{definition}
 
\begin{lemme}\label{l: bandes annulaires}
Étant donné \(s\in \sing\), les bords de \(\beta^\pm(s) \) dans leurs feuilles respectives sont des orbites périodiques du flot \(\W\). De plus, pour tout \(p\in \beta^+(s)\) (resp. \(p\in \beta^-(s) \)), l'intervalle \(I_p \) est de la forme \(]-\infty, x_1[ \) (resp. \(]x_1, +\infty[\)) avec \(x_1\in \R\). 
\end{lemme}

\begin{proof} Le sous-ensemble \(I_p \times \R\subset \R^2\) est invariant par un automorphisme non trivial du revêtement \(\pi_p\) de la forme \( ( e^{t_0} x + x_0  , t+t_0)\) avec \(t_0\) non nul (proposition \ref{p: orbite periodique}). L'intervalle \(I_p\) est alors invariant par  \(x\mapsto e^{t_0} x + x_0\), et ne contient pas son point fixe \(x_1 = x_0/(1- e^{t_0})\), sans quoi la bande contiendrait une orbite périodique de \(\W\) (ce qui est impossible car une telle orbite n'intersecte pas l'ensemble de répulsion de \(\Rep_\W (\sing\)). Il n'y a que deux intervalles non vides de ce type: \(]x_1,+\infty[\) ou \(]-\infty, x_1[\). Le bord de \(\beta^\pm(s)\) dans la feuille de \(\FF\) dans laquelle elle est contenue est la \(\W\)-orbite \( \pi_p (x_1\times \R )\) qui est périodique. Cette orbite périodique est d'holonomie dilatante, donc la bande \(\beta^+(s)\) se situe à gauche de cette orbite et la bande \(\beta^-(s)\) à droite. \end{proof}

\begin{lemme} \label{l: point T(p)} 
Soit \(s\in \sing\). Pour tout point \(p\) appartenant à une bande simplement connexe \(\beta \subset \Rep_W (s)\), l'intervalle \(I_p\) est borné. Il existe un unique point \( \sigma (p )\in \beta\) tel que \( I_{\sigma (p)} = ]-1,1[\). L'application \( p\in \Rep _\W (s) \setminus (\beta^+ (s) \cup \beta^- (s) ) \mapsto \sigma(p) \in \Rep _\W (s) \setminus (\beta^+ (s) \cup \beta^- (s) )\) est continue, constante le long des bandes et son image~\(\Sigma_s\) est une section transverse torique continue de \( \FF\) biholomorphe à la courbe elliptique \(E_s\) (lemme \ref{l: construction de US}).   
\end{lemme} 

\begin{proof} 
Les bandes non annulaires, qui accumulent vers chacune des bandes annulaires \(\beta^+(s)\) et \(\beta^-(s)\) (lemme \ref{l: topologie des bandes}), contiennent dans leur bord une trajectoire de \(W\) située dans la variété instable faible des \(\W\)-orbites périodiques  \(\partial \beta^\pm(s)\) (lemme \ref{l: bandes annulaires}). Ceci montre que \(I_p\) est un intervalle borné et que les applications qui à un point \(p\) appartenant à \(\Rep _\W (s) \setminus (\beta^+ (s) \cup \beta^- (s) ) \) associent les extrémités positives et négatives de l'intervalle \( I_p\) sont continues. L'existence et l'unicité du point \(\sigma(p)\)  découle alors de la formule suivante~: pour tout \( (x, t) \in I_p \times \R\), on a \(I_ {\pi_p (x,t)} = e^{-t} (I_p-x) \). La continuité de \(\sigma\) est une conséquence de la transversalité de la lamination \( \Rep _W (K)\) avec le feuilletage \(\FF\), ainsi que de la continuité de l'action \(\pi\) (lemme \ref{l: action groupe affine}). L'application \(\sigma\) est par construction le quotient de l'ouvert \(\Rep _\W (s) \setminus (\beta^+ (s) \cup \beta^- (s) ) \) par le feuilletage: l'image de \(\sigma\) est donc biholomorphe  à \(E_s\). \end{proof}

\begin{definition} \label{def: section transverse torique}
Soient \( \Sigma_s ^\pm \) les projections de \( \Sigma_s\) sur l'ensemble de répulsion \(\Rep _\W ( K) \) définis par les images respectives des applications 
\[ p  \in \Sigma_s \mapsto \pi_p ( (\pm 1, 0 ) ) \in \Rep _\W ( K)  . \]
On note \( \Sigma\) (resp. \(\Sigma^\pm\)) l'union des \( \Sigma_s\) (resp. \(\Sigma_s^\pm\)) pour \(s\in \sing\). 
\end{definition} 

Une bande simplement connexe \(\beta\) admet deux composantes de bord \(\partial^\pm \beta\) dans la feuille dans laquelle elle est contenue : \(\partial ^+\beta\) est la composante de bord dont  l'orientation co\"{\i}ncide avec celle de \(\W\), \(\partial ^- \beta\) celle dont l'orientation est contraire à celle de \(\W\). Si \(p\in \beta \cap \Sigma\), alors \( \partial ^+ \beta = \pi_p (1 \times \R)\) et \(\partial ^ - \beta = \pi_p (-1\times \R) \). Les points de \( \Sigma^+\) se situent donc sur les bords positifs des bandes simplement connexes et ceux de \(\Sigma^-\) sur les bords négatifs de ces dernières.

\subsection{Stabilité structurelle globale}\label{ss: stabilite structurelle globale}

Dans ce paragraphe nous terminons la démonstration du théorème \ref{t:stabilite-structurelle-globale} en construisant une conjugaison entre \(\FF\) et \(\FF'\) qui co\"{\i}ncide avec n'importe quel homéomorphisme \( l : B\rightarrow B' \) tel que 
\begin{equation} \label{eq: homeomorphisme entre B et B'}  \sup _{p\in B} d(p, l(p)) \end{equation} 
soit suffisamment petit. La stratégie consiste à modifier l'homéomorphisme \(\psi\) construit à la proposition \ref{p: stabilite structurelle W} sur le complémentaire  de l'ensemble \(K\). Cette modification a lieu en deux temps. 

On modifie dans un premier temps \(\psi\) sur l'ensemble \(\Rep _{\W} (K)\) de fa\c{c}on à construire un homéomorphisme \( \Psi : \Rep_{\W} (K) \rightarrow \Rep_{\W'} (K') \).  Puis, dans un second temps, on étend \(\Psi \) au complémentaire de \(\Rep_{\W} (K)\) qui est égal à  l'ensemble de répulsion \(\Rep_W(\sing)\) pour construire un homéomorphisme global de \(\Ptwo\) qui conjugue \(\FF\) à \(\FF'\) via une  formule explicite dans chaque bande. 

\subsubsection{Construction de \(\Psi\)}

\begin{proposition} \label{p: construction de Psi} Il existe un homéomorphisme \( \Psi : \Rep_{\W} (K) \rightarrow \Rep_{\W'} (K') \) satisfaisant les conditions suivantes : 
\begin{enumerate}
%\item {\color{blue} \(\Psi \) est proche de \(\psi\) dans la topologie compacte ouverte (si \(\FF'\) est proche de \(\FF\)) : on avait effacer cette propriété, mais c'est dommage !}
\item pour toute feuille \(F\) de \(\FF\), il existe une feuille \(F'\) de \(\FF'\) telle que \[ \Psi  (F\cap \Rep_{\W} (K) ) = F'\cap \Rep_{\W'} (K')\, ;\]
\item pour toute bande simplement connexe \(\beta \) de \( \FF \), il existe une bande simplement connexe \(\beta' \) de \(\FF'\) telle que 
\[\Psi  (\partial ^\pm  \beta ) =\partial ^\pm \beta ' .\]
De plus, il existe une bijection \( s\in \sing\mapsto s' \in \text{sing} (\FF')\) telle que 
\[\Psi (\partial \beta ^{\pm} (s) ) = \partial {\beta '} ^{\pm} (s')\, ; \]
\item il existe des constantes \( a,b>0\) telles que  \( \Psi\) induit une \((a,b)\)-quasi-isométrie entre le revêtement universel d'une \(\W\)-trajectoire et celui de son image par \(\Psi\) (dans les paramétrages donnés par les temps des champs \(\W\) et \(\W'\) respectivement); 
\item il existe une constante \(c >0\) telle que pour tout point \(p\in  \Sigma ^\pm \) (voir définition \ref{def: section transverse torique}) il existe \( p'\in {\Sigma'}^\pm\) et \(t\in \R\) tels que \(|t|\leq c\) et \( \Psi (p) = \Phi_\W ^t (p')\). 
\end{enumerate}
\end{proposition}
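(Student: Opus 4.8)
Le point de départ est l'homéomorphisme $\psi : \Ptwostar \rightarrow \Ptwostarprime$ de la proposition~\ref{p: stabilite structurelle W}, qui conjugue orbitalement $\Phi_\W$ et $\Phi_{\W'}$ en préservant l'orientation des flots, envoie $K$ sur $K'$ (corollaire~\ref{c: stabilite structurelle ensemble de Julia}) et se prolonge continûment en une bijection $\sing\rightarrow\text{sing}(\FF')$. Comme $\Rep_\W(K)$ est $\Phi_\W$-invariant, $\psi$ l'envoie sur $\Rep_{\W'}(K')$, et je prendrai pour $\Psi$ une modification de sa restriction à cet ensemble. Les conditions~(1) et~(2) sont déjà satisfaites par $\psi$ : une équivalence orbitale préservant l'orientation envoie variétés stables faibles sur variétés stables faibles et variétés instables faibles sur variétés instables faibles, ces objets étant caractérisés dynamiquement par la convergence asymptotique des orbites. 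Or le feuilletage stable faible de $\W$ est $\FF$ (proposition~\ref{p: feuilletages stables et instables}), d'où~(1). De plus, les bords $\partial^\pm\beta$ des bandes sont des $\W$-orbites contenues dans les variétés instables faibles des orbites périodiques de $K$ (lemmes~\ref{l: bandes annulaires} et~\ref{l: point T(p)}); $\psi$ envoyant orbites périodiques sur orbites périodiques en respectant le signe dilatant/contractant de l'holonomie transverse, on obtient~(2), la bijection $s\mapsto s'$ provenant du prolongement de $\psi$ à $\sing$.

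Pour la condition~(3), j'écris le long de chaque $\W$-orbite le reparamétrage temporel $\psi(\Phi_\W^t(p)) = \Phi_{\W'}^{h_p(t)}(\psi(p))$ et je montre que l'homéomorphisme croissant $h_p:\R\rightarrow\R$ est une $(a,b)$-quasi-isométrie, uniformément en $p$ (après relèvement aux revêtements universels pour traiter les orbites périodiques). Au voisinage du cœur hyperbolique compact $K$, la stabilité structurelle de Robinson fournit un changement de temps à variation bornée, donc quasi-isométrique. Dans les bassins de répulsion des singularités, je me place dans les coordonnées de linéarisation de Poincaré du lemme~\ref{l: construction de US}, où $\W$ et $\W'$ sont des sources linéaires dont les parties réelles des valeurs propres varient continûment avec le champ de vecteurs; pour $V'$ proche de $V$, les vitesses d'échappement hors de $U_s$ sont comparables, ce qui rend $h_p$ quasi-isométrique dans cette région. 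Comme une $\W$-trajectoire ne franchit qu'au plus deux fois la frontière entre ces deux régimes (argument du lemme~\ref{l: hyperbolicite}), le recollement des estimées donne une quasi-isométrie globale.

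Reste la condition~(4), qui constitue le point délicat et motive la modification de $\psi$. Chaque point de $\Sigma^\pm$ est le point distingué $\pi_p(\pm1,0)$ porté par le bord $\partial^\pm\beta$ d'une bande simplement connexe $\beta$ (lemme~\ref{l: point T(p)} et la discussion suivant la définition~\ref{def: section transverse torique}). D'après~(1) et~(2), $\psi$ envoie $\beta$ sur une bande simplement connexe $\beta'$ et l'orbite $\partial^\pm\beta$ sur l'orbite $\partial^\pm\beta'$, laquelle porte le point correspondant de ${\Sigma'}^\pm$; les deux points se trouvent ainsi sur une même $\W'$-orbite et diffèrent d'un temps $\tau(\beta)$. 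La condition~(4) revient à borner $\tau$ uniformément. Sur toute partie restant à distance uniforme des bandes annulaires, cela résulte de la continuité des objets en jeu et de la compacité de la section torique $\Sigma_s\simeq E_s$. La difficulté se concentre au voisinage des bandes annulaires $\beta^\pm(s)$, où la normalisation $I_p=\,]-1,1[$ dégénère (lemme~\ref{l: bandes annulaires}) et où les points de $\Sigma^\pm$ s'accumulent : c'est là qu'interviennent de manière essentielle la quasi-isométrie~(3) et le fait que $\W$ et $\W'$ admettent le même modèle local au voisinage de ces orbites périodiques (orbite périodique d'holonomie dilatante), ce qui maintient $\tau$ borné à la limite. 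C'est l'obstacle principal de la démonstration.

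Il reste à fabriquer $\Psi$ à partir de cette information. Je choisis une fonction $\tau$ continue, constante le long des $\W'$-orbites et uniformément bornée, coïncidant sur les orbites $\partial^\pm\beta'$ avec le décalage temporel ci-dessus, et je pose $\Psi(p):=\Phi_{\W'}^{\tau(\psi(p))}(\psi(p))$. Ce décalage, effectué orbite par orbite, définit un homéomorphisme de $\Rep_{\W'}(K')$; comme le flot $\Phi_{\W'}$ préserve à la fois le feuilletage stable faible $\FF'$, les orbites et l'ensemble invariant $\Rep_{\W'}(K')$, la composition $\Psi$ conserve les propriétés~(1) et~(2) ainsi que la relation de conjugaison orbitale — donc le reparamétrage temporel $h_p$, d'où~(3) — tout en amenant $\Sigma^\pm$ à distance temporelle au plus $c$ de ${\Sigma'}^\pm$, ce qui établit~(4).
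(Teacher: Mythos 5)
Your proposal has a genuine gap, and it sits exactly where the paper's real work lies. You claim that conditions (1) and (2) already hold for $\psi$ because an orientation-preserving orbital equivalence sends weak stable manifolds to weak stable manifolds. That argument is valid only where the weak stable sets are characterized dynamically by forward asymptotics to the compact hyperbolic set $K$, i.e.\ on the Julia leaves. For a Fatou leaf $F$, the $\W$-orbits of $F\setminus B$ are forward-asymptotic to a point of $B$, which lies at infinity of $\Ptwostar$ : there, « être dans la même feuille » is a metric statement relative to $h$ (equivalently, a statement about the extension to $B$), and $\psi$ — a homeomorphism of $\Ptwo\setminus B$ onto $\Ptwo\setminus B'$ obtained from Robinson's theorem plus flowing — is not known to extend continuously to $B$ nor to respect the fibration $\pi_B$. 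This is precisely why the paper writes $\psi$ in the charts $F_b$, $F'_{l(b)}$ in the general form $(p,c)\mapsto (p'(p),\, c'(p,c))$ : the leaf coordinate $c'$ of the image depends on the fiber coordinate $p$, i.e.\ $\psi$ does not (in any established way) send Fatou leaves of $\FF$ to leaves of $\FF'$. The same objection applies to (2) for the bands contained in Fatou leaves, since both boundary orbits $\partial^{\pm}\beta$ must land in a single leaf of $\FF'$.

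Worse, your repair mechanism cannot fix this failure: you modify $\psi$ only by $\Psi(p):=\Phi_{\W'}^{\tau(\psi(p))}(\psi(p))$, a time shift along $\W'$-orbits. Such a modification never changes which $\W'$-orbit — hence which $\FF'$-leaf, as $\W'$ is tangent to $\FF'$ — a point is sent to; so if two orbits of one Fatou leaf have $\psi$-images in two different $\FF'$-leaves, no choice of bounded $\tau$ can restore (1). The correction has to be \emph{transverse} to the orbits: the paper slides $\psi(p)$ inside its weak unstable leaf (its $\Psi_0$ satisfies $\Psi_0(p)\in \FIFa_{\W'}(\psi(p))$ and has the product form $(p'(p),\, l(c))$ in the charts, for an auxiliary homeomorphism $l\colon B\to B'$ close to the identity), then propagates this corrected boundary map along the flow using the weak-unstable cocycle $c'(\cdot,\cdot)$, keeping $\Psi=\psi$ only on the Julia part. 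Incidentally, this transverse correction is also what allows the paper to prescribe the boundary value $l$ on $B$, which is needed later for the global conjugation. Your outlines of (3) and (4) are plausible — and these are indeed the easier points once the construction is right (in the paper, (3) holds because $\Psi$ is a genuine flow conjugacy outside a compact set, and (4) follows from the boundary lemma plus compactness of the tori $\Sigma_s$) — but as written they rest on (1)–(2), so the argument does not go through.
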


\begin{proof} Nous choisirons des perturbations \(V'\) suffisamment petites de \(V\) de fa\c{c}on à ce que l'on puisse construire les voisinages \( U_{B'} \) et \(U_{S'}\) de \( B'\) et \(S'\) respectivement égaux à \(U_B\) et \(U_S\) comme dans la preuve de la proposition \ref{p: stabilite structurelle W}.

Notons \(\pi_B: \partial U_B \rightarrow B\) la projection parallèle au feuilletage \(\FF\). Il s'agit d'un fibré localement trivial en cercles au-dessus de \(B\). L'ensemble \(\Rep_{\W}(K)\) possède une structure de lamination par variétés instables faibles du flot \(\Phi_\W\) qui sont des variétés lisses de dimension \(3\) : cette lamination intersecte le fibré \(\pi_B\) transversalement. Ainsi, pour tout \(b\in B\), si l'on note \( C_b:= \pi_B^{- 1}(b) \cap \Rep_{\W} (K) \), il existe un voisinage \(\mathcal V_b\) de \(b\) dans \(B\)  ainsi qu'un homéomorphisme 
\begin{equation}\label{eq: coordonnees fibration} F_b : \pi _{B} ^{-1}(\mathcal V_b) \rightarrow \pi_B^{-1} (b)\times \mathcal V_b \end{equation}
tel que \( \pi_B = \text{pr}_{\mathcal V_b} \circ F_b\) et \( F_b (\Rep_{\W} (K)\cap \pi_B^{-1} (\mathcal V_b) ) = C_b \times \mathcal V_b\) (les plaques instables faibles étant envoyées sur les verticales \( p\times \mathcal V_b\)). La restriction de \(F_b\) à \( \pi _{B} ^{-1}(\mathcal V_b) \cap \Rep_{\W} (K) \) est unique. 

De fa\c{c}on analogue, nous définissons un fibré en cercles \(\pi_{B'}:\partial U_{B'} \rightarrow B'\),  le point \( b'= l(b)\), l'ensemble \(C'_{b'}:= \pi_{B'}^{-1}(b') \cap \text{Att}_{\W '} (K')\), le voisinage \(\mathcal V '_{b'} := l (\mathcal V_b ) \) de \(b'\in B'\) et l'homéomorphisme \( F'_{b'}: \pi _{B'} ^{-1}(\mathcal V'_{b'})\rightarrow \pi_{B'} ^{-1} (b')\times \mathcal V'_{b'}\).

L'homéomorphisme \(\psi : \Ptwostar \rightarrow \Ptwostarprime\) applique \( \Rep _{\W} (K) \) sur \(\Rep_{\W'} (K')\), en envoyant une feuille instable faible de \(\W\) sur une feuille instable faible de \(\W'\) (corollaire \ref{c: stabilite structurelle ensemble de Julia}). De plus, \(\psi\) envoie \( \partial U_B\) sur \(\partial U_{B'}=\partial U_B\) et converge vers l'identité dans la topologie compacte ouverte lorsque \(V'\) tend vers \(V\). Si le supremum \eqref{eq: homeomorphisme entre B et B'} est suffisamment petit, l'image de la courbe \(\pi_B ^{-1} (b) \) par \(\psi\) est donc contenue dans le voisinage \( \pi _{B'} ^{-1}(\mathcal V'_{b'}) \). Dans les cartes \(F_b\) et \(F'_{b'=l(b)}\), \(\psi\)  prend alors la forme 
\begin{equation}\label{eq: expression de psi en coordonnees} F_{b'} \circ \psi \circ F_b^{-1} (p, c) = (p'(p), c'(p,c) ) ,\end{equation}
sur un voisinage suffisamment petit de \(C_b\times \{b\} \) dans \(C_b \times \mathcal V_b\), tout en s'étendant en un homéomorphisme d'un voisinage de \(\pi_B^{-1} (b) \times \{b\} \) dans \(\pi_B^{-1} (b) \times \mathcal V_b \). Ceci nous permet de définir une transformation
\[ \Psi _0 : \partial U_B \cap \Rep_{\W} (K) \rightarrow \partial U_{B'} \cap \Rep_{\W'} (K') \] 
par 
\begin{equation}\label{eq: expression de Psi en coordonnees} F_{b'} \circ \Psi_0  \circ F_b^{-1} (p, c) = (p'(p), c'=l(c) ) .\end{equation}
Échanger le rôle de \(\psi\) et \(\psi^{-1} \) mène à la construction de l'inverse de \(\Psi_0 \). Ce dernier est donc un homéomorphisme qui vérifie les propriétés suivantes.

\begin{lemme}\label{l: homeo sur le bord de UB}
L'application \(\Psi _0 \) définie par \eqref{eq: expression de Psi en coordonnees} induit un homéomorphisme \[ \partial U_B \cap \Rep_{\W} (K) \rightarrow \partial U_{B'} \cap \Rep_{\W'} (K')\] vérifiant : 
\begin{itemize}
\item pour tout \(p\in \partial U_B \cap \Rep_{\W} (K)\), \(\Psi _0 (p)\) appartient à \( \FIFa _{\W'} (\psi(p))\);
\item pour tout \(b\in B\), \( \Psi _0  ( C_b ) = C' _{l(b)}\);
\item pour tout \(b\in B\), et toute composante connexe \( I\) de \(\pi_B^{-1} (b) \setminus \Rep_{\W} (K)\) d'extrémités positives et négatives \( \partial ^{\pm} I\), il existe une composante connexe \( I' \) de \(\pi_{B'}^{-1} (l(b)) \setminus \Rep_{\W'} (K')\) d'extrémités positives et négatives \( \partial ^{\pm} I'\) telle que 
\[ \Psi _0 ( \partial ^{\pm} I ) = \partial ^{\pm} I'.\]  
%\item \(\Psi\) converge uniformément vers l'identité si \(V\) tend vers \(V'\) et \(\varepsilon_0\) tend vers \(0\).
\end{itemize}  
\end{lemme}

Dans cet énoncé, il est important de noter que les fibrés en cercles \(\pi_B\) et \(\pi_{B'}\) sont orientées par l'orientation des feuilletages \(\FF\) et \(\FF'\). Il y a donc bien un sens à parler des extrémités positives et négatives d'un intervalle contenu dans une fibre.  

Pour définir l'application \(\Psi : \Rep_{\W}(K) \rightarrow \Rep_{\W '} (K')\), il nous faut introduire le cocycle instable faible sur les feuilles instables faibles du flot \(\W'\) : étant donné deux points \(x,y\in \Rep_{\W'}(K')\) qui appartiennent à la même feuille instable faible du flot \( \Phi_{\W'}\), il existe un unique réel \(c' (x,y)\in \R\) tel que \(\Phi^{c'(x,y)}_{\W'} (x) \in \FIFo_{\W'} (y)\) ou, en d'autres termes, tel que 
\begin{equation} \label{eq: cocycle} d (\Phi^{\tau+c'(x,y)}_{ \W'} (x) , \Phi^{\tau}_{\W'} (y) )  
\xrightarrow[\tau \rightarrow -\infty] {} 0. \end{equation}
Ce cocyle est continu comme fonction de \(x\) et \(y\), du fait de la structure de lamination par variétés instables fortes de l'ensemble de \(\W\)-répulsion de \(K\) (\(\mathsection\) \ref{ss: feuilletages stables et instables}). Notons que la convergence \eqref{eq: cocycle} est exponentielle et uniforme pour \( (x,y)\) dans un compact. En particulier, la fonction \(p\in \Rep_{\W}(K) \cap \partial U_B\mapsto c' (\psi(p), \Psi _0 (p)) \in \R\) est continue.   

Tout point \(q\in  \Fatou(\FF)\cap \Rep_{\W}(K) \) s'écrit de fa\c{c}on unique sous la forme  \( q= \Phi^t_{\W}  (p)\) avec \(p\in \partial U_B\cap \Rep_{\W} (K)\) et \(t\in \R \). Écrivons 
\[ \psi (q) = \Phi ^{\tau} _{\W'} ( \psi(p) ) \]
pour un certain \(\tau = \tau (q) \) qui tend uniformément vers \(-\infty\) (resp. \(+\infty \)) lorsque \(q\) tend vers \(J(\FF)\) (resp. \(B\)). Définissons  \(\Psi\) sur \( \Fatou(\FF)\cap \Rep_{\W} (K)\) par la formule
\begin{equation} \label{eq: definition extension} \Psi (q):= \Phi _{\W'} ^{\tau +c'(\Psi_0(p), \psi(p))}(\Psi _0 (p) ) \end{equation}
Observons que la limite \eqref{eq: cocycle} donne 
\begin{equation} \label{eq: convergence psi Psi} \lim\limits _{q\in (\Fatou(\FF)\cap \Rep_{\W}(K) ) \setminus \text{Int}(U_B) \rightarrow \infty} d( \Psi (q)  , \psi(q) ) =0 .
%\xrightarrow[q\in (\Fatou(\FF)\cap \Rep_{\W}(K) ) \setminus \text{Int}(U_B) \rightarrow \infty] {} 0
\end{equation}
Le lemme suivant termine la démonstration de la proposition \ref{p: construction de Psi}.

\begin{lemme} \label{l: dem de la proposition construction de Psi}
La transformation \[ \Psi : \Rep_{\W} (K)  \rightarrow \Rep_{\W'} (K')  \] définie par \eqref{eq: definition extension} sur \( \Fatou(\FF)\cap \Rep_{\W}(K) \) et par \(\psi _{|K}\) sur \(K= \Julia(\FF)\cap \Rep_{\W}(K)\) est un homéomorphisme qui vérifie les propriétés de la conclusion de la proposition~\ref{p: construction de Psi}.
\end{lemme}

\begin{proof}
Les propriétés (1) et (2) de la proposition \ref{p: construction de Psi} découlent immédiatement du lemme \ref{l: homeo sur le bord de UB} en restriction à chaque feuille du domaine de Fatou de~\(\FF\). L'homéomorphisme \(\psi\) envoie les feuilles de l'ensemble de Julia de \(\FF\) sur celles de l'ensemble de Julia de \(\FF '\), et l'ensemble \(\Rep_W (K) \) sur l'ensemble \(\Rep _{W'} (K')\) : en particulier, il envoie les bandes de \(\FF\) contenues dans l'ensemble de Julia de \(\FF\) sur des bandes de \(\FF'\) contenues dans l'ensemble de Julia de \(\FF'\). Comme par définition \(\Psi\) est égal à \(\psi \) sur l'intersection de l'ensemble de Julia de~\(\FF\) avec \( \Rep_W (K)\), les propriétés (1) et (2) de la proposition \ref{p: construction de Psi} sont également satisfaites sur les feuilles de l'ensemble de Julia de \(\FF\).

Montrons que \(\Psi\) est un homéomorphisme. Par construction, c'est un homéomorphisme en restriction à \(\Rep_{\W} (K) \cap \Fatou (\FF) \) (lemme \ref{l: homeo sur le bord de UB}). %ainsi que sur \(\Rep_{\W} (K)\setminus \Fatou(\FF)= \Rep_{\W} (K) \cap \Julia(\FF)\) car elle y est égale à \(\psi\). 
Pour montrer que~\(\Psi\) définit un homéomorphisme globalement, l'unique point délicat est la continuité de \(\Psi\) en un point de l'ensemble de Julia de \(\FF\). Or si \((p_n)_n\) est une suite de points de \( \text{dom} (\Psi) =\Rep_{\W} (K) \) qui converge vers un point \(p_\infty \in \Julia(\FF) \), on peut décomposer la suite \( (p_n)\) en deux sous-suites \( (p_{n_k})_k\) et \( (p_{m_l})_l\) avec \( p_{n_k} \in \Fatou(\FF) \cap \Rep_{\W} (K) \) et \( p_{m_l}\in\Julia(\FF)\). Il est clair que, à supposer que la sous-suite \( (p_{m_l})_l\) soit infinie, on a \(\lim_l \Psi (p_{m_l})=\Psi (p_\infty)\) puisque \(\Psi\) et \(\psi\) co\"{\i}ncident sur \(J(\FF)\) et que \(\psi\) est continue. Quant à la suite \( (p_{n_k})_k\), à supposer qu'elle soit infinie, la limite \eqref{eq: convergence psi Psi} nous donne 
\[ d (\Psi (p_{n_k}), \Psi(p_\infty) = \psi (p_\infty) ) \leq d(\Psi (p_{n_k}), \psi(p_{n_k})) + d(\psi(p_{n_k}), \psi(p_{\infty}))\xrightarrow[k\to \infty] {} 0,\]
ce qui conclut la preuve de la continuité de \(\Psi\) le long de la suite \((p_n)\). 

 Par construction, l'homéomorphisme \(\Psi: \Rep _\W (K) \rightarrow \Rep _{\W'} (K') \) est en dehors d'un compact de \(\Rep_W (K)\) une conjugaison entre les flots induit par \(\W\) et \(\W'\). Le troisième item de la proposition \ref{p: construction de Psi} est donc satisfait par \(\Psi\). 
  
Enfin, en vertu du troisième item du lemme \ref{l: homeo sur le bord de UB} ainsi que de la compacité de \( T^\pm\),  le quatrième item de la proposition \ref{p: construction de Psi} est également satisfait pour \(\Psi\). La démonstration du lemme \ref{l: dem de la proposition construction de Psi} est achevée. \end{proof}

\end{proof} 

\subsubsection{Extension de \(\Psi\) sur \(\Rep_W(\sing)\)}
Nous définissons dans ce paragraphe une application 
\[ \widetilde{\Psi } : \Rep_\W (\sing) \rightarrow  \Rep_{\W'} (\text{sing} (\FF') ) , \]
qui est, en restriction à chaque bande \(\beta\) de \(\FF\), une extension de \(\Psi\). Il y a deux cas :

\vspace{0.3cm}

\textit{Premier cas : \(\beta\) est simplement connexe.}  Considérons la bande \(\beta '\) (pour le champ \(W'\)) associée à \(\beta\) (proposition \ref{p: construction de Psi}). Soit \( p\in \beta \) (resp. \(p'\in \beta'\)) le point tel que \( I_p = ]-1, 1[\) (resp. \( I_{p'} = ]-1, 1[\)) donné par le lemme \ref{l: point T(p)}. 

L'homéomorphisme \(\Psi\) induit un homéomorphisme \( \Psi_ \beta  \) de \(\partial \left([-1,1]  \times \R\right)\) défini par 
\[ \pi_ {p'}  \circ \Psi _\beta  = \Psi  \circ \pi_ p .\]
Étant donné \(t\in \R\), considérons les points \( P^\pm   =  (\pm 1, t) \in \partial \left([-1,1]  \times \R\right)\) et \( Q^\pm := \Psi_\beta (P^\pm)\in \partial \left([-1,1]  \times \R\right)\). On subdivise l'intervalle orienté \( [P^- , P^+] = [-1,1] \times \{t\} \subset \R^2\) en trois intervalles \( I^-, I^m, I^+\), ordonnés par ordre croissant, avec \(I^-\) et \( I^+\) de longueur 
\begin{equation} \label{eq: longueurs} l(I^\pm) := \frac{1}{3} \inf ( l([P^- , P^+] ) , l ([Q^- , Q^+]), \end{equation}
où \([P^- , P^+]\) et \([Q^-, Q^+]\) sont les segments affines entre les points \(P^\pm\) et \(Q^\pm\) respectivement vis-à-vis de la structure affine naturelle de \( \R^2\) et la longueur \(l\) étant mesurée relativement à la métrique hyperbolique \( e^{-2t} dx^2 + dt^2\).

Définissons l'extension \(\widetilde{\Psi}_\beta\) de \(\Psi_\beta\) à \([-1,1]\times \R\)  par les conditions suivantes. Pour tout \(t\in \R\) : 
\begin{itemize}
\item l'image par \(\Psi_\beta\) du segment \( [P^-, P^+]\) est le segment affine \([Q^-, Q^+]\);
\item la restriction de \(\Psi_\beta\) aux segments \(I^\pm\) respecte la longueur d'arc (pour la métrique hyperbolique \( e^{-2t} dx^2 +dt^2\));
\item la restriction de \(\Psi_\beta\)  au segment \(I^m\) dilate les longueurs par multiplication par une certaine constante (qui dépend de \(t\)).
\end{itemize}  
On définit l'extension \(\widetilde{\Psi} \) de \(\Psi \) à \(\beta\) via la formule
\begin{equation} \label{eq: extension Psi} \pi_ {p'}  \circ \widetilde{\Psi} _{\beta} ^\pm = \widetilde{\Psi} \circ \pi_ p .\end{equation}

\vspace{0.3cm}

\textit{Deuxième cas : \(\beta\) n'est pas simplement connexe.} Supposons que \(\beta\) soit une bande annulaire positive. Il lui correspond alors une bande positive \( \beta '\) de \(\FF'\) (voir le deuxième item de la proposition \ref{p: construction de Psi}). Soient  \(p\in \beta\), \(p'\in \beta'\). En reprenant les notations du lemme \ref{l: bandes annulaires}, les applications \(\pi_p, \pi_{p'} \) induisent des homéomorphismes  \( (e^{t_0} x+x_0, t+t_0)\backslash (]-\infty, x_1[ \times \R)  \rightarrow \beta\) et \(  (e^{t_0'} x+x_0', t+t_0')\backslash  (]-\infty, x_1 '[ \times \R)  \rightarrow \beta'\). La restriction de l'homéomorphisme \(\Psi \) à \(\partial \beta\) induit donc un homéomorphisme \( \Psi_\beta :  (e^{t_0} x+x_0, t+t_0)\backslash (x_1 \times \R)  \rightarrow (e^{t_0'} x+x_0', t+t_0')\backslash (x_1' \times \R) \) satisfaisant
\[ \Psi \circ \pi_p = \pi_{p'} \circ \Psi_\beta .\] 
L'application \( \Psi_\beta\) se relève à un homéomorphisme de \(x_1 \times \R\) de la forme \(x_1 \times h_\beta\) où \(h_\beta\) est un homéomorphisme de \(\R\). On étend alors \(\Psi_ \beta\) à l'homéomorphisme \(\widetilde{\Psi} _\beta :   (e^{t_0} x+x_0, t+t_0)\backslash (]-\infty, x_1] \times \R)  \rightarrow (e^{t_0'} x+x_0', t+t_0')\backslash (]-\infty, x_1'] \times \R) \) défini par 
\[ \widetilde{ \Psi} _\beta ( x , t )\text{ mod }   (e^{t_0} x+x_0, t+t_0) := (x_1 ' + e^{h_\beta (t) - t}(x-x_1 ') , h_\beta (t) )  \text{ mod }   (e^{t_0'} x+x_0', t+t_0') \]
Il s'agit du prolongement de \( \Psi_\beta\) qui envoie le le feuilletage horizontal de \( ]-\infty, x_1] \times \R\) sur celui de \( ]-\infty, x_1'] \times \R\) en préservant la longueur d'arc sur les feuilles de ce dernier relativement à la métrique hyperbolique \( e^{-2t} dx^2 + dt^2\). Enfin, on définit l'extension \(\widetilde{\Psi} \) de \(\Psi \) à \(\beta\) via la formule \eqref{eq: extension Psi}. Si \(\beta\) est une bande annulaire négative, on procède de fa\c{c}on analogue.

\begin{lemme}\label{toto}
\(\widetilde{\Psi} \) est continue. 
\end{lemme} 

\begin{proof} 
Le lemme \ref{l: point T(p)}  ainsi que la construction de \(\widetilde{\Psi}\) dans les bandes simplement connexes montrent que \(\widetilde{\Psi} \) est continue en dehors des bandes annulaires. Elle est de plus continue en restriction à chaque bande annulaire. Il nous suffit donc de montrer qu'elle admet une limite le long de toute suite de points \(p_n\) appartenant à des bandes simplement connexes \(\beta_n\) et convergeant vers un point~\( p \) appartenant à une bande annulaire \(\beta \) et que cette limite est égale à \(\widetilde{\Psi} (p)\). 

Nous supposerons que \(\beta \) est une bande annulaire positive, le cas d'une bande négative se traitant de fa\c{c}on similaire. 
Soient \(\pi_n:= \pi'_{T(p_n)}: ]-1,1[ \times \R \rightarrow \beta_n\) et \(\pi_n ' = \pi_{T'(p_n')} : ]-1,1[ \times \R\rightarrow \beta_n' \) (voir lemme \ref{l: point T(p)}). Notons \( P_n= (x_n, t_n )=\pi_n^{-1} ( p_n) \)  les coordonnées des points \( p_n \) et introduisons les points 
\[ Q_n = \widetilde{\Psi}_{\beta_n} (P_n)=(x_n', t_n') ,  \ \  P_n ^\pm = (\pm 1,t_n) \text{ et } Q_n^\pm = \widetilde{\Psi}_{\beta_n} (P_n^\pm)  =(\pm 1, {t_n'} ^\pm) ,\]
et les points correspondant dans \(\Ptwo\)
\[ p_n ^+ = \pi_n (P_n^+), \ \ p^+ = \pi (P^+) \ \text{ et } \ q_n^\pm = \pi _n ' (Q_n ^\pm) . \]
Enfin, si \( I_p = ] -\infty, l[\) pour un certain nombre \(l>0\) (car \(\beta\) est une bande annulaire positive), notons \(p_+ = \pi_p  (l,0) \). 

On a \( \pi _{p_n} (x,t)= ( e^{-t_n} (x-x_n) , t - t_n) \), et donc \( I_{p_n} = ]-  e^{-t_n} (1+x_n) , e^{-t_n} (1-x_n)[\). Comme \(I_{p_n} \) converge vers \( I_p\), on obtient  
\[ t_n   \xrightarrow[n\rightarrow \infty] {} -\infty
\quad \text{et} \quad 
e^{-t_n} (1-x_n)  \xrightarrow[n\rightarrow \infty] {}  l .\]   

Les troisième et quatrième items de la proposition \ref{p: construction de Psi} montrent que, en posant \(d= b+c\) :  
\begin{equation} \label{eq: tnprime}        a^{-1} t_ n - d \leq  {t_n ' } ^\pm \leq  a t_n + d .\end{equation}
On en déduit que les longueurs des segments  \([P_n^-, P_n ^+]\) et \([Q_n^-, Q_n ^+] \) (mesurées avec la métrique hyperbolique \( e^{-2t} dx^2 + dt^2\)) tendent vers l'infini lorsque \(n\) tend vers l'infini, et que \( Q_n \) est le point situé sur le segment \([Q_n^- , Q_n ^+]\) à distance \( l_n = l ([P_n, P_n^+]) \) car cette dernière tend vers \(l\) et reste donc bornée. 

On a, par définition de l'extension \(\widetilde{\psi}\) : \( \psi (p_n^+) = q_n^+\), par suite \( q_n^+ \) converge vers \( \psi (p^+)\). D'autre part, la longueur du segment \([q_n, q_n^+]\) converge vers \(l\). Pour conclure, il nous suffit donc de montrer que l'angle entre le segment \([q_n^-, q_n^+]\) et le bord droit de \(\beta_n '\) tend vers \(\pi /2\) pour la métrique hyperbolique \( e^{-2t} dx^2 + dt^2\). Pour cela, pla\c{c}ons-nous dans les coordonnées données par \( \pi'_{q_n^+}\)~: les points \(q_n^+\) et \(q_n^- \) ont alors pour coordonnées respectives \((\pi'_{q_n^+})^{-1}(q_n^+) =(0,0)\) et \( (\pi'_{q_n^+})^{-1}(q_n^-)  =(-2 e^{-{t_n'}^+}, {t_n'}^- - {t_n'} ^+ ) \) et le bord gauche de \(\beta_n '\) est l'axe vertical \((\pi'_{q_n^+})^{-1} (\partial ^+ \beta_n' ) = \{0\} \times \R\subset \R^2\).  On a donc bien le résultat en vertu de l'estimation \eqref{eq: tnprime} qui donne
   \[ {t_n'}^- - {t_n'} ^+ = o ( e^{-{t_n'}^+} ) .\]
\end{proof} 

%\subsection{Démonstration  du théorème \ref{t:stabilite-structurelle-globale}}

\begin{lemme}
L'application \( \Ptwostar \rightarrow \Ptwostarprime \) définie par \(\Psi \) dans \( \Rep_W (K) \) et par \(\widetilde{\Psi} \) dans \( \Rep_\W (\sing) \)  s'étend en un homéomorphisme de \(\Ptwo\) dans lui-même qui conjugue topologiquement les feuilletages \(\FF\) et \(\FF'\).
\end{lemme}

\begin{proof}
Considérons une suite \(p_n\) de points de \(\Ptwostar \) qui converge vers un point \(p_\infty\) appartenant à \( \Rep_W (K)\). Pour voir que \( \Psi (p_n) \) converge vers \(\Psi(p_\infty)\), il suffit de supposer que \( p_n \notin \Rep _W (K)\) ou, de fa\c{c}on équivalente, que \( p_n \in \Rep_W(\sing) \). Comme \( \Rep _W (K)\) a la structure d'une lamination de dimension \(3\) réelle  transverse au feuilletage \(\FF\), on pourra trouver une suite de points \(q_n\), qui sont des extrémités du segment horizontal  contenant \(p_n\) dans sa bande et tels que \( d(p_n, q_n) \rightarrow 0\). La suite \(q_n\) converge donc vers \(p_\infty\). Par définition de \(\Psi\) sur \(\Rep_W(K)\), on déduit \(d (\Psi (p_n), \Psi(q_n) ) \rightarrow 0\) puis \(\lim \Psi (p_n) =\lim \Psi (q_n) = \Psi (p_\infty)\). La continuité de \(\Psi\) est ainsi démontrée.

Tout homéomorphisme de \(\Ptwostar \) dans \(\Ptwostarprime \) qui envoie les feuilles de \(\FF\) sur celles de \(\FF'\) s'étend en un homéomorphisme de \( \Ptwo\) qui conjugue topologiquement les feuilletages~\(\FF\) et~\(\FF'\).
\end{proof}

%%%%%%%%%%%%%%%%%%%%%%%%%%%%%%%%%%%%%%%%%
\section{Étude des sections transverses} \label{s: sections transverses}%%
%%%%%%%%%%%%%%%%%%%%%%%%%%%%%%%%%%%%%%%%%

Étant donné un feuilletage algébrique complexe \(\FF\) de \(\Ptwo\), on rappelle qu'une section transverse de \(\FF\) est une surface réelle \( Z\subset \Ptwo\setminus \sing\) de classe~\(C^1\) et transverse à \(\FF\) en tout point. Une telle surface hérite naturellement des structures transverses du feuilletage \(\FF\) et, en particulier, d'une structure de surface de Riemann. Dans la suite, on munit les sections transverses de l'orientation induite par leur structure de surface de Riemann. %Une immersion d'une surface dans \(\Ptwo\) dont l'image est localement une section transverse sera appelée une section transverse immergée. 

\subsection{Des sections transverses toriques}
Commen\c{c}ons par donner des exem\-ples de sections transverses dans les voisinages arbitrairement petits des singularités hyperboliques. 

\begin{lemme}\label{l: section transverse torique singularite}
Étant donné un feuilletage algébrique \(\FF\) de \(\Ptwo\),  au voisinage de toute singularité hyperbolique de \(\FF\),  il existe une section transverse arbitrairement proche de la singularité qui est biholomorphe à la courbe elliptique \(\C/ ( \Z \lambda + \Z \mu) \), où \(\lambda\) et \(\mu\) sont les valeurs propres d'un champ de vecteurs définissant \(\FF\). 
\end{lemme}

\begin{proof}
On reprend les notations de la démonstration du lemme \ref{l: construction de US}. La restriction de l'intégrale première \eqref{eq: integrale premiere singularite} à la surface torique d'équation \(T=\{|u|= |v|= r_1 \}\) avec \(0<r_1<r\) induit un biholomorphisme entre  \(T\) et \(\C/ ( \Z \lambda + \Z \mu) \). \end{proof}

Une généralisation de cette méthode permet de construire une section transverse torique dans le voisinage de n'importe quel lacet  d'holonomie hyperbolique. Rappelons qu'un lacet \(\gamma: \S ^1\rightarrow F\) de classe \( C^1\) contenu dans une feuille d'un feuilletage algébrique complexe \(\FF\) sur \(\Ptwo\) est dit d'holonomie hyperbolique si la dérivée de l'application d'holonomie associée à \(\gamma\) est un nombre complexe de module différent de \(1\). 

\begin{lemme}\label{l: section transverse torique lacet hyperbolique}
Étant donné un lacet d'holonomie hyperbolique \(\gamma\) dans une feuille d'un feuilletage transversalement holomorphe de codimension complexe un, il existe une section transverse torique dans un voisinage arbitrairement petit autour de \(\gamma\). 
\end{lemme}

\begin{proof}[Démonstration] 
Quitte à considérer  une sous-variété de dimension \(3\) transverse au feuilletage et contenant \(\gamma\), on peut supposer que la variété  est  de dimension réelle~\(3\), et que le feuilletage est de dimension réelle \(1\) et transversalement holomorphe. Il est donc défini par un champ de vecteurs \(X\) non singulier, au moins dans un voisinage suffisamment petit de \(\gamma\).  Quitte à changer l'orientation de ce dernier, on peut supposer également que l'holonomie de \(\gamma\) dans le sens défini par~\(X\) est contractante. Dans ce cas, il est bien connu qu'il existe une métrique adaptée \(g_N\)  sur le fibré normal \(\NRF\) de \(\FF\) qui est strictement contractée par le flot induit par \(X\) en tout temps strictement positif. En considérant une métrique sur~\(M\) qui induit la métrique \(g_N\) sur \(\NRF\), pour \(\varepsilon >0\) suffisamment petit, la surface constituée des points à distance \(\varepsilon\) de \(\gamma\) est transverse à \(\FF\). \end{proof}

%\begin{remarque}
%Le lecteur pourra vérifier que la section transverse construite dans le lemme \ref{l: section transverse torique lacet hyperbolique} est biholomorphe au tore \( \C ^* / m \), où \(m \) est le multiplicateur de l'holonomie associé à \( \gamma\). Pour un feuilletage transversalement holomorphe de codimension \(r\geq 2\), ce résultat se généralise comme suit : si la transformation d'holonomie \(h_{\gamma}: (\BB ^r, 0)\rightarrow (\BB ^r, 0) \) associée à un lacet feuilleté \(\gamma\) a des valeurs propres de module strictement plus petit que \(1\) en son point fixe, alors il existe une section transverse dans un voisinage arbitrairement proche de \(\gamma\) qui est biholomorphe à la variété de Hopf \( (\BB ^r _r \setminus \{0\}) / h_\gamma\) pour \(r>0\) suffisamment petit.
%\end{remarque}

\subsection{Caractéristique d'Euler}

Notons que \( H_2(\Ptwo,\Z)\) est cyclique infini engendré par la classe d'homologie d'une droite projective complexe \(\PUC\): ainsi, pour toute section transverse~\(Z\)  compacte, il existe un entier \( d_Z\) tel que la classe d'homologie \([Z]\) de \( Z\) soit \( [Z]= d_Z [\PUC]\). Cet entier est appelé le degré de \( Z\). 

\begin{lemme} \label{l: caracteristique section transverse}
Étant donné un feuilletage algébrique complexe $\FF$  de degré $d$ de $\Ptwo$, toute section transverse compacte $Z$ est de degré $d_Z=0 $ ou $d_Z= 1-d$. Si~\(Z\) a une composante connexe de caractéristique d'Euler non nulle, alors celle-ci est unique et de genre \(\frac{d(d+1)} {2}\). 
%, et ses composantes connexes sont soit des courbes elliptiques soit des surfaces de Riemann de caractéristique d'Euler $(d+2)(1-d)$. De plus, il n'existe au plus qu'une composante connexe de caractéristique d'Euler $(d+2)(1-d)$.
\end{lemme}

\begin{proof} [Démonstration.]  Cela résulte des isomorphismes naturels
\[ T^\R Z \simeq ( \NRF ) _{|Z} \quad \text{ et } \quad N^\R Z \simeq (\TRF ) _{|Z} ,\]
ce qui donne, étant donné que dans la partie régulière on a \( \NRF \simeq \NF\) et \(\TRF \simeq \TF\) 
\[ \chi (Z) = [\NF] \cdot [Z]  \quad \text{ et } \quad    [Z]^2 = [\TF] \cdot [Z].\]
Dans ces formules, \(\chi\) est la caractéristique d'Euler, les crochets désignent les classes de Chern des fibrés en droites complexes correspondant et on note \(\cdot\) le produit d'intersection, ou la dualité selon le point de vue.
Comme $\NF = \mathcal{O} (d+2)$ et $\TF = \mathcal{O} (1-d)$ pour un feuilletage de degré $d$, on obtient les formules 
\[  \chi (Z)= (d+2) d_Z \quad \text{   et   } \quad d_Z ^2 = (1-d) d_Z. \]
On a donc \(d_Z= 0\) ou \(d_Z=1-d\). Par ailleurs, ces formules montrent qu'une composante connexe de \(Z\) de caractéristique  d'Euler non nulle a un degré non nul. Puisque la forme d'intersection sur \(H_2(\Ptwo,\Z)\) est non dégénérée, une telle composante est unique. \end{proof}

Le degré d'une courbe de \(\Ptwo\) étant strictement positif, une section transverse compacte n'est donc jamais une courbe algébrique de \(\Ptwo\), sauf si le degré du feuilletage est nul. 

 Certains feuilletages algébriques complexes de $\Ptwo$ ne possèdent pas de section transverse compacte: c'est par exemple le cas des pinceaux de Lefschetz (autres que les pinceaux de droites). En effet, si pour un tel pinceau  $f : \Ptwo\dashrightarrow \PUC $ il existait une section transverse $Z$, la restriction de $f$ à $Z$ serait un revêtement de $Z$ sur $\PUC $, ce qui montrerait que $Z$ est une union disjointe de sphères et contredirait le lemme~\ref{l: caracteristique section transverse}. 

Pour d'autres feuilletages algébriques complexes du plan projectif, il existe des sections transverses toriques mais pas de section transverse de caractéristique d'Euler non nulle: c'est par exemple le cas des feuilletages qui admettent une courbe algébrique invariante et qui ont quelque part un lacet d'holonomie hyperbolique. 

\begin{lemme}\label{l: incompatibilite sections transverses courbes algebriques}
Soit \(\FF\) un feuilletage algébrique de degré \(d \geq 1\) du plan projectif complexe ayant une courbe algébrique invariante \(A\). Alors toute section transverse compacte de \(\FF\) est un tore qui ne rencontre pas \(A\). 
\end{lemme}

\begin{proof}[Démonstration] Soit \(Z\) une section transverse de \(\FF\). Comme les indices d'intersection d'une section transverse avec une feuille sont égaux à \(1\), on a \(d_A \cdot d_Z = [A]\cdot [Z] = |A\cap Z|\). Le degré d'une courbe algébrique étant strictement positif, on en déduit que \(d_Z\) est positif ou nul, et est nul si et seulement si \(A\) n'intersecte pas \(Z\). Or nous avons vu que la caractéristique d'Euler d'une section transverse est soit nulle soit strictement négative et que, dans ce dernier cas, son degré est strictement négatif (voir la démonstration du lemme \ref{l: caracteristique section transverse}). \end{proof}

Un problème  intéressant serait d'étudier les feuilletages algébriques de $\Ptwo$ admettant une section transverse compacte de caractéristique d'Euler non nulle: il nous semble plausible qu'une condition nécessaire et suffisante soit l'absence d'une courbe algébrique invariante (le lemme \ref{l: incompatibilite sections transverses courbes algebriques} montre qu'il s'agit d'une condition nécessaire). 

\subsection{Connexité}

\begin{proposition}\label{p: connexite}
Soit \(\FF\) un feuilletage algébrique complexe de \(\Ptwo\) de degré~\(d\) satisfaisant les propriétés \(\Pr\) et \(\Qr\). Alors la section transverse \(B\) formée par l'ensemble des points critiques de \( W\) le long des feuilles régulières de \(\FF\) est une surface de Riemann compacte connexe de genre \(\frac{d(d+1)}{2} \). 
\end{proposition}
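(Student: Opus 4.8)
Le plan est de séparer trois énoncés : la compacité et la structure de surface de Riemann (immédiates à partir de $\Pr$), le calcul du genre via la caractéristique d'Euler (routinier une fois $\W$ en place), et la connexité, qui sera le point délicat.

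Je commencerais par la structure. Sous la propriété $\Pr$, le lemme \ref{l: construction de UB} affirme que $B=\Pi(\tilde B)$, avec $\tilde B=\{R\cdot V=0\}$ (éq.~\ref{eq: section transverse}), est une section transverse \emph{lisse} de $\FF$. Comme $\tilde B$ est fermé et invariant par homothéties, $B$ est l'image du fermé $\tilde B\cap \S^5$ par l'application quotient propre $\S^5\to\Ptwo$, donc $B$ est compacte. Étant une section transverse, elle hérite de la structure transverse holomorphe de $\FF$ une structure de surface de Riemann (cf. l'introduction de la partie~\ref{s: sections transverses}). Il ne reste donc qu'à déterminer son type topologique.

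Pour le genre, le champ réel $\W$ s'annule exactement sur $B\cup\sing$ (corollaire suivant le lemme~\ref{l: expression de Wtilde}), et j'appliquerais la formule de Poincaré--Hopf à $\W$ sur $\Ptwo$. Chaque $s\in\sing$ est une source (propriété $\Qr$), donc un zéro isolé non dégénéré d'indice $+1$ ; comme $\FF$ est de degré $d$ à singularités hyperboliques (donc simples), on a $\#\sing=d^2+d+1$ (\cite{Brunella}). D'autre part, d'après le lemme~\ref{l: construction de UB}, le fibré normal de $B$ dans $\Ptwo$, qui s'identifie à $\TRF_{|B}$, est uniformément contracté par $\Phi_\W$ : $B$ est donc une sous-variété critique de type Morse--Bott à fibré normal purement stable, de contribution $\chi(B)$. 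On obtient
\[ 3=\chi(\Ptwo)=\chi(B)+(d^2+d+1), \]
d'où $\chi(B)=2-d(d+1)=(d+2)(1-d)$. Par le lemme~\ref{l: caracteristique section transverse}, chaque composante connexe de $B$ est soit un tore (degré $0$, caractéristique nulle), soit une surface de genre $\tfrac{d(d+1)}{2}$ (degré $1-d$, caractéristique $(d+2)(1-d)$), et il y a au plus une composante de caractéristique non nulle. La valeur de $\chi(B)$ impose alors l'existence d'exactement une composante de genre $\tfrac{d(d+1)}{2}$, les éventuelles composantes restantes étant des tores.

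La connexité est l'obstacle principal : il s'agit d'exclure ces composantes toriques. Via le théorème~\ref{t: existence domaine errant}, l'ensemble de Fatou $D$ est un fibré en disques localement trivial au-dessus de $B$ ; les fibres étant connexes, $\pi_0(D)=\pi_0(B)$ et il revient au même de montrer que $D$ est connexe. Je privilégierais l'approche fournie par la description de $B$ comme courbe mixte de bidegré $(1,d)$ : sous $\Pr$, $\tilde B$ est lisse et invariant par homothéties, donc est le cône sur le lien $\Sigma:=\tilde B\cap\S^5$ de la singularité mixte isolée à l'origine de l'application $p\mapsto R\cdot V$ ; comme $B=\Sigma/\S^1$, la connexité de $B$ résulterait, via la théorie des polynômes mixtes de Oka~\cite{Oka}, de celle du lien d'une singularité mixte isolée dans $\C^3$. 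J'insiste sur le fait que ce point ne peut être obtenu par comptage : la formule de Poincaré--Hopf comme le calcul du degré total $d_B=1-d$ sont compatibles avec un nombre arbitraire de tores supplémentaires. Une variante purement dynamique consisterait à associer à une hypothétique composante torique $T$ son bassin $D_T$ (réunion de composantes connexes de $D$), dont la frontière serait un compact $\Phi_\W$-invariant non vide contenu dans l'ensemble hyperbolique $K$ (proposition~\ref{p: K est hyperbolique}) et répulsif pour la restriction du flot à $\overline{D_T}$ ; il faudrait en tirer une contradiction avec les propriétés de transitivité des variétés instables de $K$, qui balayent $B$. C'est cette exclusion des tores qui constitue la partie difficile de la démonstration.
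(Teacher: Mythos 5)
Your preliminary steps are correct, and your Poincaré--Hopf computation is a genuine alternative to the paper's Chern-class argument: writing \(3=\chi(\Ptwo)=\chi(B)+(d^2+d+1)\), with each hyperbolic source of index \(+1\) and \(B\) a normally attracting zero manifold contributing \(\chi(B)\), recovers \(\chi(B)=(d+2)(1-d)\) globally; combined with Lemma \ref{l: caracteristique section transverse} this even gives for free that \(B\) is nonempty with \emph{exactly one} component of genus \(\tfrac{d(d+1)}{2}\), a point the paper obtains instead from Oka's bidegree statement. But the proposition is not proved: the exclusion of the torus components, which you rightly identify as the crux, is left open, and neither of your two sketches closes it (your own wording, \og résulterait \fg, \og il faudrait en tirer une contradiction \fg, concedes this).

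The route through Oka's theory cannot work as stated. There is no theorem asserting that the link of an isolated mixed singularity in \(\C^3\) (equivalently, that a smooth mixed projective curve) is connected, and the general statement is false: the zero sets of \(x\bar x-2z\bar z+i\,(y\bar y-z\bar z)\) and of \(x\bar x-3z\bar z+i\,(y\bar y-2z\bar z)\) are two disjoint smooth mixed tori in \(\Ptwo\), so the product of these two mixed polynomials defines a cone that is smooth away from the origin and whose link has two components. Connectedness is thus not a local or topological property of the mixed equation \(R\cdot V=0\); it genuinely uses the dynamics of \(\FF\) (the paper cites \cite{Oka} only for \(B\neq\emptyset\)). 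Your dynamical variant is also not viable as sketched: the frontier of the saturation of a torus component lies in the Julia set, i.e. in \(\Att_\W(K)\cup\sing\), not in \(K\) itself, and the unstable lamination \(\Rep_\W(K)\) contains no point of \(B\) (points of \(B\) are fixed by \(\Phi_\W\), and \(B\) is not even in \(\Ptwostar\)), so there is no \og balayage \fg\ of \(B\) to contradict. What the paper actually does is the following: assuming a torus component \(C\) with saturation \(D_C\), it extracts the pseudo-minimal set of \(\partial D_C\) and produces a compact \emph{toric} transverse section \(T\) meeting both \(D_C\) and its complement --- via Lemma \ref{l: section transverse torique singularite} if that set contains a singularity, via the Bonatti--Langevin--Moussu theorem \cite{BLM} and Lemma \ref{l: section transverse torique lacet hyperbolique} otherwise. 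The holonomy projection \(P_U:U=T\cap D_C\rightarrow C\), well defined by Theorem \ref{t: existence domaine errant}, is a local biholomorphism whose derivative blows up at \(\partial U\) by Corollary \ref{c: decroissance exponentielle des derivees transverses}; since \(T\) and \(C\) are elliptic, hence parallelizable, this derivative is a holomorphic function \(P'_U:U\rightarrow\C^*\), and the maximum principle applied to \(1/P'_U\) gives the contradiction. This analytic mechanism --- a local biholomorphism onto a torus with exploding derivative at the boundary --- is the idea missing from your proposal.
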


\begin{proof} [Démonstration]
D'après le lemme \ref{l: caracteristique section transverse}, il suffit d'établir que \(B\) n'a pas de composante connexe de genre \(1\). Par l'absurde, supposons que ce soit le cas et notons \(C\subset B\) une telle composante ; \(C\) est alors une courbe elliptique. Le saturé de \(C\) par \(\FF\) est un ouvert que l'on notera \( D_C\). 

\begin{lemme}\label{l: construction d'une section transverse torique appropriee}
Il existe une section transverse torique \(T\) qui intersecte à la fois \( D_C\) et \(D_C^c\). 
\end{lemme}

\begin{proof} [Démonstration]
La frontière \(\partial D_C\) de \( D_C\) est saturée par les feuilles de \(\FF\) et n'est pas contenue dans l'ensemble singulier de \(\FF\). En effet, si tel était le cas, \(\partial D_C\) étant  connexe, il serait réduit à une singularité \(s\) de \(\FF\). Il en résulterait que les deux  séparatrices de \(s\) seraient contenues dans \(D_C\). Ces dernières seraient alors simplement connexes d'après le théorème \ref{t: existence domaine errant} donc des courbes rationnelles passant par la singularité. Or le feuilletage ne peut posséder de courbe algébrique invariante puisqu'il admet une section transverse de caractéristique d'Euler non nulle (lemme \ref{l: incompatibilite sections transverses courbes algebriques}). 

Considérons alors l'unique ensemble pseudo-minimal \( M\subset\Ptwo\) contenu dans  \(\partial D_C\). Il s'agit d'un ensemble fermé, saturé par le feuilletage, non réduit à un sous-ensemble de singularités et minimal pour ces propriétés. En particulier, toute feuille \(F\) de \(\FF\) contenue dans \(M\) est dense dans \(M\). Un tel ensemble existe, il suffit de considérer un ensemble minimal pour la restriction du feuilletage à \( \Ptwo \setminus \text{Int}(U_S)\) qui est contenu dans \(\partial D_C\) (ce dernier intersecte le complémentaire de \(U_S\) car il est saturé par \(\FF\) et n'est pas contenu dans l'ensemble singulier) puis de saturer l'ensemble obtenu par \(\FF\).  Comme \(M\) n'est pas réduit à une courbe algébrique invariante, un point régulier de \(M\) n'est jamais transversalement isolé dans \(M\). L'unicité résulte du fait que le complémentaire d'un ensemble pseudo-minimal est un ouvert de Stein \cite{Takeuchi} mais nous n'utiliserons pas ce point.

Dans le cas où \(M\) contient une singularité \(s\) de \(\sing\), la section transverse torique construite au lemme \ref{l: section transverse torique singularite} convient. En effet, puisque \(M\) ne contient pas de point transversalement isolé dans sa partie régulière, l'intersection de \(M\) avec une petite boule autour de \(s\) ne peut être réduit aux deux séparatrices de \(\FF\) en \(s\), et donc intersecte \(T\). Aussi, l'ouvert  \(D_C\) accumule sur \(s\) et, en particulier, intersecte \(T\). Comme \(M\) et \(D_C\) sont disjoints, cela prouve bien que \( T\) intersecte à la fois \(D_C\) et \(D_C^c\). 

Supposons maintenant que \( M\) ne contient aucune singularité de \(\FF\). Dans ce cas, un théorème de Bonatti, Langevin et Moussu \cite{BLM}  (voir également \cite{Deroin Kleptsyn} ainsi que le théorème \ref{t: conjecture Anosov} qui offrent des démonstrations alternatives)  montre qu'il existe un lacet d'holonomie hyperbolique \(\gamma\) contenu dans \(M\). Dans ce cas, la section transverse torique associée à \(\gamma\), dont la construction est expliquée au lemme \ref{l: section transverse torique lacet hyperbolique}, convient. En effet, \(T\) intersecte \(D_C\) puisque \(D_C\) est ouvert et accumule sur \(\gamma\). De plus,  \(T\) intersecte \(M\), donc \(D_C\) puisque \(\gamma\) n'est pas transversalement isolé dans~\(M\). 
\end{proof}

Nous sommes maintenant en mesure d'achever la démonstration de la proposition \ref{p: connexite}. Soit \(T\) la section transverse torique construite au lemme \ref{l: construction d'une section transverse torique appropriee} et soit \(U\)  l'ouvert de \(T\) défini par \( U := T \cap D_C\). Ce dernier est un ouvert non vide strict de \(T\). Considérons l'application \( P : D_C \rightarrow C\) qui, à un point de \(D_C\), associe l'unique point de \(C\) situé dans la même feuille de \(\FF\). Cette application est bien définie en vertu du théorème \ref{t: existence domaine errant} et sa restriction à toute section transverse est un biholomorphisme local. La restriction \(P_U\) de \(P\) à \(U\) est donc un biholomorphisme local. 
Or la norme de la dérivée de \(P_U\) (mesurée vis-à-vis de la métrique hermitienne \(g_N\) construite dans le parapraphe \ref{ss: contraction transverse}) tend vers l'infini lorsqu'elle est évaluée en un point qui tend vers \(\partial U\). Cela découle du corollaire \ref{c: decroissance exponentielle des derivees transverses}. En parallélisant les courbes elliptiques \(T\) et \(C\), la dérivée  de \(P_U\) définit une fonction holomorphe \( P'_U : U \rightarrow \C^* \) dont la norme tend vers \(+\infty\) lorsque l'on tend vers \(\partial U\). Or ceci est impossible en vertu du principe du maximum appliqué à la fonction \(1/ P'_U\). 
\end{proof}

%%%%%%%%%%%
%%%%%%%%%%%
%\section{Digression : le feuilletage de Jouanolou réel} %%
%%%%%%%%%%%
%%%%%%%%%%%

%\begin{theoreme} Si $d$ est pair, le feuilletage de Jouanolou n'est pas orientable, et toutes ses feuilles tendent vers $s$ à l'infini dans chaque direction. Si $d$ est impair, le feuilletage de Jouanolou admet une unique orbite périodique $\gamma$, et on peut munir le feuilletage d'une orientation de sorte que toute feuille a pour $\omega$-limite $s$ et $\alpha$-limite~$\gamma$.\end{theoreme}

%%%%%%%%%%%%%%%%%%%%%%%%%%%%%%%%%%%%%%%%%%%%%%%%%%%%%
%%%%%%%%%%%%%%%%%%%%%%%%%%%%%%%%%%%%%%%%%%%%%%%%%%%%%
\section{Démonstration du théorème principal} \label{s: numerique} %%
%%%%%%%%%%%%%%%%%%%%%%%%%%%%%%%%%%%%%%%%%%%%%%%%%%%%%
%%%%%%%%%%%%%%%%%%%%%%%%%%%%%%%%%%%%%%%%%%%%%%%%%%%%%

Nous allons déduire la stabilité structurelle du feuilletage de Jouanolou \(\FJ2\) en montrant qu'il satisfait les propriétés \textbf{\(\Qr\)} et \textbf{\(\Pr\)} (théorème \ref{t:stabilite-structurelle-globale}) puis nous allons montrer que \(B\) est biholomorphe à  la quartique de Klein. 

\subsection{Le groupe de symétrie de \(\FJ2\)}

Le groupe \(\text{Aut} (\FJ2)\) est le groupe des transformations projectives qui préservent \(\FJ2\). Il  a été calculé par Jouanolou dans \cite{Jouanolou} : il s'agit du groupe engendré par les transformations 
\begin{equation} \label{eq: symetries s et t}  s = [y:z:x] \quad \text{ et } \quad t = [ \zeta x : \zeta^2 y : \zeta ^4 z ] \end{equation} 
où \(\zeta\) est une racine \(7\)-ième primitive de l'unité. La structure algébrique du groupe  \(\text{Aut} (\FJ2)\) peut être complètement décrite par les relations entre \(s\) et \(t\) : toutes se déduisent des trois relations suivantes 
\[ s^3 = 1, \quad t^7 = 1, \quad sts^{-1} = t^2 .\]   
On en déduit que \(\text{Aut} (\FJ2)\) se relève à un sous-groupe du groupe unitaire \( \text{U}(3) \subset \text{GL} (3,\C)\), via le morphisme \begin{equation} \label{eq: relevement} m : \text{Aut} (\FJ2)\rightarrow \text{U} (3) \end{equation}  
qui à \(s\) associe \( m(s) = (y,z,x)\) et à \(t\) associe \( m(t) = (\zeta x , \zeta^2 y , \zeta ^4 z)\). 

\begin{lemme}\label{l: preservation de W et B}
Le groupe \(\text{Aut} (\FJ2)\) préserve le champ \(\W\) et la courbe mixte \(B\).
\end{lemme}

\begin{proof}
Cela est d\^u au fait que les transformations \( m(s) \) et \(m(t)\) préservent la classe projective du champ de Jouanolou \(\J2\) ainsi que la forme hermitienne standard sur \(\C^3\) et par suite la fonction \( f(p) := - \log \norm{p}^{2} \).
\end{proof}

\subsection{La propriété \textbf{\(\Qr\)}}

\begin{lemme} \label{l: les singularites de FJ2 sont des sources}
Le feuilletage de Jouanolou \(\FJ2\) vérifie la propriété \textbf{\(\Qr\)}. \end{lemme}

\begin{proof}[Démonstration]
Le point $[1:1:1]$ est une singularité de $\J2$ en laquelle $\J2 (p)=p$.  Dans la carte affine $\{z\neq 0\}$, en notant \(u,v\) les coordonnées définies par $x=u z$ et $y= vz$, le feuilletage est donné par le champ de vecteurs 
\[ X =  (v^2 - u^{3}) \frac{\partial} {\partial u} + (1- u^2 v) \frac{\partial} {\partial v} \]
dont les valeurs propres sont $\lambda _\pm = - 2 \pm i \sqrt{3}$. Elles sont linéairement indépendantes sur \(\R\) et ont chacune une partie réelle strictement négative. Ainsi  la singularité \([1:1:1]\) est hyperbolique et, d'après le lemme \ref{l: partie lineaire W} et sa démonstration, le champ $\W$ est égal à  \( \frac{-2}{3} \Re (X) \)  à l'ordre \(1\).  En particulier, $[1:1:1]$ est une source pour \(W\). 
Le lemme \ref{l: preservation de W et B} nous permet de déduire cette propriété aux autres singularités de \(\FJ2\) en utilisant le groupe de symétries de l'équation de Jouanolou. \end{proof}

\subsection{La propriété \textbf{\(\Pr\)}}

\begin{proposition}
\label{transversality-Btwotwo}
La courbe mixte \(B\) (corollaire \ref{eq: section transverse}) d'équation 
\begin{equation}\label{eq: B Jouanolou2}  x\bar{y}^2 + y\bar{z}^2 + z\bar{x}^2 = 0 \end{equation}
est transverse à \(\FJ2\). 
De plus, le feuilletage de Jouanolou \(\FJ2\) vérifie la propriété \textbf{\(\Pr\)}, c'est-à-dire 
\[[x: y: z] \in B \ \Longrightarrow \ 2 |\bar{x}yz^2 + \bar{y}z x^2 + \bar{z}x y^2| < |x|^4 + |y|^4 + |z|^4.\]
\end{proposition}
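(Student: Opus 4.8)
Le plan consiste à vérifier les deux clauses de la définition~\ref{propriete-PB} pour $V=\J2$, puis à ramener l'inégalité analytique obtenue à une inégalité algébrique élémentaire. Le champ $\J2(x,y,z)=(y^2,z^2,x^2)$ ne s'annule manifestement pas sur $\C^3\setminus\{0\}$. Un calcul direct des produits hermitiens donne $R\cdot V = x\bar y^2+y\bar z^2+z\bar x^2$ (de sorte que $\tilde B$ est défini par \eqref{eq: B Jouanolou2}), ainsi que $DV(V)=(2yz^2,2zx^2,2xy^2)$, d'où
\[ DV(V)\cdot R = 2\,(\bar x yz^2+\bar y zx^2+\bar z xy^2), \qquad \norm{V}^2 = |x|^4+|y|^4+|z|^4. \]
D'après le lemme~\ref{l: singularites longitudinales de \W}, un point critique de $-\log\norm{\cdot}^2$ le long d'une feuille est non dégénéré d'indice $2$ --- c'est-à-dire un puits --- exactement lorsque $\norm{V}^2>|DV(V)\cdot R|$, ce qui est l'implication annoncée ; le caractère strict de cette inégalité fournit en même temps la non-dégénérescence, donc le caractère lisse et la transversalité de $B$ via le lemme~\ref{l: construction de UB}. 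Tout se ramène ainsi à démontrer $2|Q|<N$ sur $\tilde B\setminus\{0\}$, où $Q:=\bar x yz^2+\bar y zx^2+\bar z xy^2$ et $N:=|x|^4+|y|^4+|z|^4$.

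L'obstacle essentiel est que cette inégalité est \emph{fausse} en l'absence de la contrainte $P:=x\bar y^2+y\bar z^2+z\bar x^2=0$ (en $[1:1:1]$, la majoration grossière $|Q|\le |x||y||z|(|x|+|y|+|z|)$ est déjà en défaut), si bien que la contrainte non holomorphe doit être exploitée via les phases. Je passerais donc aux variables $a=|x|$, $b=|y|$, $c=|z|$ et aux phases $u,v,w$ de module $1$ des trois monômes, définies par $x\bar y^2=ab^2u$, $y\bar z^2=bc^2v$, $z\bar x^2=ca^2w$, de sorte que $P=0$ s'écrit $ab^2u+bc^2v+ca^2w=0$. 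Une brève manipulation des phases donne l'identité
\[ Q = abc\,\overline{uvw}\,(au+bv+cw), \qquad\text{d'où}\qquad |Q| = abc\,|au+bv+cw|. \]

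J'utiliserais ensuite la contrainte pour fixer les phases relatives. En prenant les modules au carré dans $ab^2u=-(bc^2v+ca^2w)$ et dans ses deux analogues cycliques, on exprime $\Re(u\bar v),\Re(v\bar w),\Re(w\bar u)$ comme des fonctions rationnelles explicites de $a,b,c$ (loi des cosinus). En les reportant dans $|au+bv+cw|^2=a^2+b^2+c^2+2ab\,\Re(u\bar v)+2bc\,\Re(v\bar w)+2ca\,\Re(w\bar u)$, l'inégalité $2|Q|<N$ devient une inégalité réelle. En posant $p=a^2,q=b^2,r=c^2\ge 0$ et en chassant le dénominateur $pqr$, elle prend la forme de l'inégalité polynomiale finie et explicite
\[ F(p,q,r):=p^4+q^4+r^4+6\,(p^2q^2+q^2r^2+r^2p^2)-4\,(p^3r+pq^3+qr^3)>0. \]

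Cette dernière inégalité est celle que l'on ramène à une vérification par ordinateur dans la partie~\ref{s: numerique} ; je la conclurais plutôt sous forme close en exhibant le certificat
\[ F = (p^2-2pr)^2+(q^2-2pq)^2+(r^2-2qr)^2+2\,(p^2q^2+q^2r^2+r^2p^2), \]
manifestement positif et ne s'annulant qu'à l'origine : si deux des $p,q,r$ sont non nuls le dernier groupe est strictement positif, tandis que si un seul l'est, l'un des trois carrés vaut $p^4$, $q^4$ ou $r^4$. Comme en un point de $\tilde B\setminus\{0\}$ le triplet $(a,b,c)$ est non nul, ceci établit $2|Q|<N$, et avec elle la transversalité et la propriété~\(\Pr\) pour $\FJ2$. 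Le cœur de l'argument est la substitution des phases menant à l'identité pour $|Q|$ : une fois le problème écrit en $a,b,c,u,v,w$, la contrainte comme la quantité à majorer ne dépendent des phases qu'à travers $u,v,w$, et la contrainte fixe les trois cosinus requis, de sorte qu'aucune phase ne subsiste dans l'inégalité finale.
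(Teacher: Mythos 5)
Your proof is correct, and it takes a genuinely different route from the paper's. After the common preliminary step (computing \(R\cdot V\), \(DV(V)\cdot R\) and \(\norm{V}^2\) and invoking the lemma on the longitudinal singularities of \(W\), exactly as the paper does), the paper normalizes \(z\) to have maximal modulus, passes to the functions \(F\) and \(G\) on the unit bidisk, and reduces the strict inequality to a finite family of about \(3{,}3\) million integer inequalities on a Gaussian-integer grid (mean value theorem plus explicit Lipschitz bounds for \(C_F(N)\), \(C_G(N)\)), verified by computer for \(N=145\); no closed-form argument appears there, and in particular the quartic inequality you derive is \emph{not} the one the computer checks, contrary to what your last paragraph suggests. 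Your route instead eliminates the phases exactly: the identity \(Q=abc\,\overline{uvw}\,(au+bv+cw)\) is correct (it follows from \(uvw=\bar{x}\bar{y}\bar{z}/(abc)\)), the law-of-cosines substitution gives \(|Q|^2=p^3r+pq^3+qr^3-(p^2q^2+q^2r^2+r^2p^2)\) on the constraint set, and the desired inequality squares to exactly your \(F(p,q,r)>0\); your sum-of-squares certificate does expand to \(F\), and its strict positivity away from the origin is immediate. What the paper's method buys is flexibility (the same grid scheme applies verbatim to other degrees and other norms \(\ell^p\), which is what Section~\ref{sec:images-exp-num} exploits numerically); what yours buys is a complete, human-checkable proof, together with an exact formula for \(|Q|\) on \(B\), which is strictly stronger information than the inequality itself.

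One small point must be patched. The phase elimination divides by \(abc\) (the law of cosines expresses \(\Re(u\bar v)\), \(\Re(v\bar w)\), \(\Re(w\bar u)\) with denominators \(a^ib^jc^k\)), so the equivalence with \(F>0\) is only established on \(\{abc\neq 0\}\); your closing sentence conflates \((a,b,c)\neq(0,0,0)\) with \(abc\neq 0\). The missing case is one line --- if a coordinate vanishes at a point of \(\tilde{B}\), the equation \eqref{eq: B Jouanolou2} forces a second coordinate to vanish, hence \(Q=0\) while \(N>0\) --- but it should be stated.
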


Le problème étant invariant par la symétrie~\(s\) donnée par \eqref{eq: symetries s et t}, on peut supposer~\(z\) de module maximal.
Il suffit dans ce cas de démontrer l'inégalité ci-dessus en tout point \((x,y,z)\) satisfaisant l'équation \(x\bar{y}^2 + y\bar{z}^2 + z\bar{x}^2 = 0\) et vérifiant \(|z| \neq 0\), ainsi que les deux inégalités \(|x| \leq |z|\) et \(|y| \leq |z|\).
Posons alors \(u = x / z\) et \(v = y / z\) et considérons les deux fonctions
\[F(u,v) = u\bar{v}^2 + v +\bar{u}^2 \quad \text{et} \quad G(u,v) = \frac{\bar{u} v+\bar{v} u^2 + uv^2 } {|u|^4 +|v|^4 + 1}.\]
En notant~\(\D\) le disque unité de~\(\C\), la proposition~\ref{transversality-Btwotwo} suit du lemme suivant.

\begin{lemme}
\label{transversality-proposition-bi-disk}
Pour tout \(p\) de \(\D \times \D\), si \(F(p) = 0\) alors \(|G(p)| < 1/2\).
\end{lemme}

Nous allons à présent voir comment ramener le lemme~\ref{transversality-proposition-bi-disk} à la vérification d'un nombre fini d'inéquations sur les entiers.

Pour tout entier naturel \(N\) non nul, on désigne par \(\Gamma_N = \frac{1}{N} \Z[i]\), où \(\Z[i]\) est l'anneau des entiers de Gauss et par \(\D_N = \rho_N \D\) le disque de rayon \(\rho_N = 1+ \frac{1}{N}\).
On note également
\[C_F(N) = \sup_{\D_N \times \D_N} \norm{\text{d}F} \qquad \text{et} \qquad C_G(N) = \sup_{\D_N \times \D_N} \norm{\text{d}G}\]
les bornes supérieures sur le bi-disque \(\D_N \times \D_N\) (muni de sa structure hermitienne standard) des normes d'opérateurs des différentielles \(\text{d}F\) et \(\text{d}G\).
On définit enfin l'ensemble
\begin{equation}
\mathbb{E}_N = \left\{ (U, V)\in \Z[i]^2 \ ; \
\left\{
    \begin{array}{lll}
         |U| \leq N + 1 \ \ \text{et} \ \ |V| \leq N + 1 \\
         |U\bar{V}^2 +N^2 V+N \bar{U}^2| \leq  N^2 C_F(N)\\
    \end{array}
\right. \right\}
\end{equation}
et la condition \(\mathbb{C}_N\)
\begin{equation}
\forall Q \in \mathbb{E}_N, \ \ N |N\bar{U} V+ \bar{V} U^2 + UV^2 | < \left(\frac{1}{2} - \frac{C_G(N)}{N}\right) \left(|U|^4 + |V|^4 + N^4 \right).
\end{equation} 

\begin{lemme}
\label{condition-CN-implies-transversality}
S'il existe un entier naturel \(N\) non nul tel que la condition~\(\mathbb{C}_N\) soit satisfaite, alors le lemme~\ref{transversality-proposition-bi-disk} est également satisfait.
\end{lemme}

\begin{proof}
Soit \(N\) un entier naturel non nul tel que la condition~\(\mathbb{C}_N\) soit satisfaite.
Tout nombre complexe \(w\) est à une distance inférieure ou égale à \(1/\sqrt{2}N\) de \(\Gamma_N\) puisque \(Nw\) est à distance au plus de \(1/\sqrt{2}\) d'un entier de Gauss.
Soit \(p = (u,v)\) un point de \(\D \times \D\) tel que \(F(p) = 0\) et \(q\) un point de \(\Gamma_N \times \Gamma_N\) dont chacune des coordonnées est à une distance inférieure ou égale à \(1/\sqrt{2}N\) de celles de \(p\).
On a donc \(\norm{p-q}_2 \leq 1/N\) et, d'après le théorème des accroissements finis,
\[|F(q)| = |F(q)-F(p)| \leq \frac{C_F(N)}{N}.\] 
En notant \(Q = Nq = (U,V) \in \Z[i]^2\), l'inégalité précédente se réécrit
\[\lvert U \bar{V}^2 + N^2 V + N \bar{U}^2 \rvert \leq N^2 C_F(N).\]
Le point~\(Q\) appartient donc à~\(\mathbb{E}_N\) puisque, d'après l'inégalité triangulaire, \(|U|\) et \(|V|\) sont inférieurs ou égaux à \(N + 1/\sqrt{2}\), donc à \(N+1\).
On déduit alors de la condition~\(\mathbb{C}_N\) que 
\[|G(q)| < \frac{1}{2} - \frac{C_G(N)}{N}\]
et, toujours d'après le théorème des accroissements finis,
\[|G(p)| \leq |G(q)| + \frac{C_G(N)}{N} < \frac{1}{2}.\]
\end{proof}

Nous allons à présent donner des estimations des constantes~\(C_F(N)\) et~\(C_G(N)\).
Pour cela, en tout point \((u,v)\) du bi-disque \(\D_N \times \D_N\), la différentielle de \(F\) est donnée par
\[\text{d}F(u,v) =\bar{v}^2 \text{d}u +2\bar{u}\text{d}\bar{u} + \text{d}v + 2u\bar{v}\text{d}\bar{v},\]
ce qui fournit l'estimation suivante
\[|\text{d}F(u,v)| \leq 3\rho_N^2 \left(|\text{d}u|+|\text{d}v|\right) \leq 3\rho_N^2 \sqrt{2} \sqrt{|\text{d}u|^2+|\text{d}v|^2},\]
d'où finalement \(C_F(N) \leq 3\sqrt{2}\rho_N^2\).
Notons~\(P\) et~\(Q\) respectivement le numérateur et le dénominateur de la fonction~\(G = P / Q\); on a alors \(\text{d}G = \frac{\text{d}P}{Q} - \frac{P\text{d}Q}{Q^2}\).
Les différentielles de~\(P\) et~\(Q\) sont données par
\[
\left\{
\begin{array}{r @{\ = \ } l}
    \text{d}P(u,v) & (v^2+2u\bar{v}) \text{d}u +v\text{d}\bar{u} + (\bar{u} +2uv)\text{d}v+u^2\text{d}\bar{v} \\
    \text{d}Q(u,v) & 2u\bar{u}^2 \text{d}u +2u^2 \bar{u}\text{d}\bar{u} +2v\bar{v}^2 \text{d}v +2v^2 \bar{v}\text{d}\bar{v}, \\
\end{array}
\right.
\]
d'où les estimations
\[|\text{d}P(u,v)| \leq 4 \rho_N^2\sqrt{2} \sqrt{|\text{d}u|^2+|\text{d}v|^2} \quad \text{et} \quad |\text{d}Q(u,v)| \leq 4\rho_N^3\sqrt{2} \sqrt{|\text{d}u|^2+|\text{d}v|^2}.\]
On en déduit alors
\[|\text{d}G(u,v)| \leq 4\sqrt{2}\rho_N^2 + 12\sqrt{2}\rho_N^6 \quad \text{et donc} \quad C_G(N) \leq 4\sqrt{2}\rho_N^2(1+3\rho_N^4).\]

\vspace{0.1cm}

Dans une dernière étape, nous allons ramener la démonstration du lemme~\ref{transversality-proposition-bi-disk}, et par suite celle de la proposition~\ref{transversality-Btwotwo}, à des calculs \textit{uniquement} sur des entiers naturels.
Pour cela, nous définissons l'ensemble
\begin{equation}
\mathbf{E}_N = \left\{ (U, V)\in \Z[i]^2 \ ; \
\left\{
    \begin{array}{lll}
         |U|^2\leq (N+1)^2 \ \ \text{et} \ \ |V|^2\leq (N+1)^2 \\
         |U\bar{V}^2 +N^2 V+N \bar{U}^2|^2 \leq  18 (N+1)^4\\
    \end{array}
\right. \right\}
\end{equation}
et la condition
\begin{equation}
\mathbf{C}_N = \forall Q \in \mathbf{E}_N, \ 10 N^2 |N\bar{U} V+ \bar{V} U^2 + UV^2 |^2 < \left(|U|^4 + |V|^4 + N^4\right)^2.
\end{equation} 

\begin{lemme}
S'il existe un entier naturel \(N \geq 54\) tel que la condition~\(\mathbf{C}_N\) soit satisfaite, alors la condition~\(\mathbb{C}_N\) est également satisfaite.
\end{lemme}

\begin{proof}
Soit \(N\) un entier naturel non nul tel que la condition~\(\mathbf{C}_N\) soit satisfaite.
Soit \(Q = (U, V)\) un élément de l'ensemble~\(\mathbb{E}_N\).
Nous avons calculé, à la suite de la démonstration du lemme~\ref{condition-CN-implies-transversality}, la majoration \(C_F(N) \leq 3\sqrt{2}\rho_N^2\) de laquelle on déduit que \((N^2 C_F(N))^2 \leq 18 (N+1)^4\), et finalement que \(Q\) appartient à l'ensemble~\(\mathbf{E}_N\).
On déduit alors de la condition~\(\mathbf{C}_N\) que
\[|N\bar{U} V+ \bar{V} U^2 + UV^2 | < \frac{1}{\sqrt{10}N} \left(N^4 + |U|^4 +|V|^4\right).\]
En utilisant à présent la majoration \(C_G(N) \leq 6 \rho_N^2 (1 + 3\rho_N^4)\) calculée précédemment et le fait que le membre de droite de cette inégalité est une fonction décroissante de~\(N\) tendant vers~24, on laisse au lecteur le soin de vérifier que
\[\frac{1}{\sqrt{10}N} \leq \frac{1}{2} - \frac{C_G(N)}{N},\]
dès lors que \(N\) est au moins égal à~54, ce qui termine la démonstration.
\end{proof}

\begin{lemme}
La condition \(\mathbf{C}_{145}\) est vérifiée.
\end{lemme}

\begin{proof}
La démonstration du lemme se réduit à vérifier un ensemble fini d'inégalités sur les entiers naturels.
\end{proof}

Pour conclure cette section, nous indiquons quelques données numériques à propos de la condition~\(\mathbf{C}_{145}\) et de la condition~\(\mathbf{C}_{150}\) que nous avons également vérifiée.
Rappelons qu'il s'agit d'un ensemble d'inégalités de la forme \(\iota(U,V) \leq \kappa(U,V)\).

\vspace{0.2cm}

\noindent
\textbf{Condition \(\mathbf{C}_{145}:\)}
\begin{itemize}
\item nombres d'inégalités à vérifier \(= 3 \ 329 \ 227\)
\item \(\text{max}_{\mathbf{E}_{145}} \{\iota(U,V)\} = 371 \ 887 \ 603 \ 636 \ 416 \ 250 \ (< 2^{59})\)
\item \(\text{max}_{\mathbf{E}_{145}} \{\kappa(U,V)\} = 1 \ 815 \ 000 \ 825 \ 762 \ 712 \ 225 \ (< 2^{61})\)
\item \(\text{min}_{\mathbf{E}_{145}} \{\kappa(U,V) - \iota(U,V)\} = 22 \ 039 \ 448 \ 963 \ 015 \ 524 \ (< 2^{55})\)
\item \(\text{max}_{\mathbf{E}_{145}} \left\{ \frac{\iota(U,V)}{\kappa(U,V)} \right\} \simeq 0.935442548319427\)
\end{itemize}
Les inégalités étant indépendantes les unes des autres, il est possible de distribuer le calcul et effectuer les vérifications en quelques dizaines de secondes. 

\vspace{0.2cm}

\noindent
\textbf{Condition \(\mathbf{C}_{150}:\)}
\begin{itemize}
\item nombres d'inégalités à vérifier \(= 3 \ 558 \ 612\)
\item \(\text{max}_{\mathbf{E}_{150}} \{\iota(U,V)\} = 486 \ 516 \ 872 \ 673 \ 000 \ 000 \ (< 2^{59})\)
\item \(\text{max}_{\mathbf{E}_{150}} \{\kappa(U,V)\} = 2 \ 385 \ 954 \ 719 \ 278 \ 502 \ 464 \ (< 2^{62})\)
\item \(\text{min}_{\mathbf{E}_{150}} \{\kappa(U,V) - \iota(U,V)\} = 29 \ 998 \ 373 \ 501 \ 278 \ 096 \ (< 2^{55})\)
\item \(\text{max}_{\mathbf{E}_{150}} \left\{ \frac{\iota(U,V)}{\kappa(U,V)} \right\} \simeq 0.932602375568296\)
\end{itemize}

\noindent
\textit{Remarque:}
Notons que tous les entiers naturels en jeu ici sont strictement inférieurs à~\(2^{64}\) et qu'il est donc possible d'implémenter un algorithme de vérification de ces inégalités sur une architecture informatique classique de~64 bits dans un langage de programmation standard.
En particulier, le recours à des langages de programmation permettant d'écrire des programmes manipulant des \og grands entiers \fg\ n'est pas nécessaire ici.

%%%%%%%%%%%%%%%%%%%%%%%%%%%%%%%%%%%%%%%%%
\subsection{La section transverse \(B\) est biholomorphe à la quartique de Klein} %%
%%%%%%%%%%%%%%%%%%%%%%%%%%%%%%%%%%%%%%%%%

%Nous venons de voir que le feuilletage de Jouanolou \( \FJ2\) de degré \(2\) sur \(\Ptwo\) possède la propriété \(\Pr\). En particulier, la courbe mixte \(B\) d'équation \eqref{eq: B Jouanolou2} est transverse au feuilletage et admet donc une structure de surface de Riemann.

\begin{proposition} \label{p: caracterisation de la quartique de Klein} Toute surface de Riemann de genre $3$ qui contient une copie de $\text{Aut} (\FJ2)$ dans son groupe d'automorphismes est biholomorphe à la quartique de Klein
\begin{equation}\label{eq: quartique de Klein} xy^3+yz^3+zx^3=0. \end{equation}
 \end{proposition}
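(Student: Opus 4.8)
The plan is to recognise $G := \mathrm{Aut}(\FJ2)$ as the Frobenius group of order $21$ and to reconstruct the surface as a cyclic cover of the projective line whose branching is rigidly prescribed by the presentation of $G$. Write $C = \langle t\rangle \cong \Z/7\Z$; the relations $s^3 = 1$, $t^7 = 1$, $sts^{-1} = t^2$ show that $C$ is normal with $G/C \cong \Z/3\Z$ and that $C$ is the commutator subgroup of $G$ (so $G^{\mathrm{ab}} \cong \Z/3\Z$). Let $X$ be a genus-$3$ Riemann surface carrying a faithful action of $G$, and set $Y = X/C$.

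First I would run Riemann--Hurwitz for $X \to Y$. Since $7$ is prime, every non-trivial stabiliser equals $C$, so the formula reads $4 = 7\bigl(2g(Y)-2\bigr) + 6r$, where $r$ is the number of fixed points of $t$; the only solution in non-negative integers is $g(Y) = 0$ and $r = 3$. Thus $Y \cong \PUC$ and $X$ is a connected $\Z/7\Z$-cover of $\PUC$ branched over exactly three points $P_1, P_2, P_3$, with a well-defined rotation number $a_i \in \Z/7\Z \setminus \{0\}$ at each fixed point. Because the thrice-punctured sphere is rigid (any three points are $\mathrm{PGL}_2$-equivalent to $0,1,\infty$, and $\pi_1$ of the complement is free on two generators), the cover $X$ is determined up to biholomorphism by the unordered datum $\{a_1,a_2,a_3\}$, taken modulo multiplication by a unit of $(\Z/7\Z)^*$ (change of generator of $C$) and modulo permutation of the $P_i$.

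The heart of the argument is to pin down this datum using $s$. As $C$ is normal, $G/C \cong \Z/3\Z$ acts on $Y$, and I would first check this action is faithful. If $\bar s$ acted trivially on $Y$ it would fix each $P_i$ together with the single point $\tilde P_i$ above it, so $s$ and $t$ would share a fixed point; the resulting isotropy representation $G \to \C^*$ is faithful on the stabiliser, hence $t$ would map to a non-trivial root of unity, contradicting $G^{\mathrm{ab}} \cong \Z/3\Z$. Hence $\bar s$ is an order-$3$ Möbius transformation, which, having only two fixed points, must cyclically permute $P_1, P_2, P_3$. Transporting the isotropy character by $s$ and using $s^{-1} t s = t^4$ (the consequence of $sts^{-1} = t^2$), I would obtain $a_{i+1} = 4\,a_i$, so that $\{a_1,a_2,a_3\} = a_1\cdot\{1,4,2\}$. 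Up to the admissible unit rescaling this is exactly $\{1,2,4\}$, the set of quadratic residues mod $7$; the compatibility $a_1 + a_2 + a_3 = 7a_1 \equiv 0$ holds automatically, confirming there is no further branching.

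It then remains to identify the cover. By the rigidity recalled above, the branching datum $\{1,2,4\}$ determines $X$ up to biholomorphism; and the Klein quartic \eqref{eq: quartique de Klein}, which has genus $3$ and admits $G$ as a group of automorphisms, gives rise by the same analysis to a $\Z/7\Z$-cover of $\PUC$ with the same datum $\{1,2,4\}$. Two such covers being biholomorphic, $X$ is biholomorphic to the Klein quartic. The main obstacle is the third paragraph: one must rule out the other genus-$3$ types of order-$7$ covers (for instance $\{1,1,5\}$, which equally realises a faithful $\Z/7\Z$-action in genus $3$), and this is possible only by exploiting the $\Z/3\Z$-symmetry together with the precise conjugation relation $sts^{-1}=t^2$, which forces the rotation numbers into a single $\langle 2\rangle$-orbit of $(\Z/7\Z)^*$.
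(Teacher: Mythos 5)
Your proof is correct, and it takes a genuinely different route from the paper's. The paper linearizes the action on the canonical space: writing $L=\Omega^1(C)^*$, the induced representation $\rho:\mathrm{Aut}(C)\to \mathrm{GL}(L)$ is faithful on the group (which has odd order, so the hyperelliptic involution creates no kernel); the relation $sts^{-1}=t^2$ forces the spectrum of $\rho(t)$ to be $\{\lambda,\lambda^2,\lambda^4\}$ for a primitive $7$th root of unity $\lambda$, so that $\rho$ is a Galois twist of the unitary representation $m$ of \eqref{eq: relevement}, and a monomial count shows that the only quartic equation projectively invariant under such a linear group is, in suitable coordinates, $xy^3+yz^3+zx^3=0$; since the canonical image of $C$ is an invariant quartic (counted with multiplicity), it must be the Klein quartic and the canonical map is an embedding. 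You instead pass to the quotient by $C=\langle t\rangle$, pin down the quotient genus and the branching by Riemann--Hurwitz, and exploit the same relation $sts^{-1}=t^2$ --- now through the isotropy characters, giving $a_{i+1}=4a_i$ --- to force the rotation data into the single class $\{1,2,4\}$, concluding by rigidity of $\Z/7\Z$-covers of the thrice-punctured sphere. The trade-off is clear: the paper's argument is self-contained and produces the plane model of $C$ directly, with no prior knowledge that the Klein quartic carries the action, and non-hyperellipticity comes out for free; yours is the standard branch-data classification of group actions on curves, arguably more conceptual and more transparent about why the competing genus-$3$ data such as $\{1,1,5\}$ are excluded, but it requires as external input the easy classical verification, which you correctly flag, that the smooth quartic \eqref{eq: quartique de Klein} has genus $3$ and is invariant under $s$ and $t$. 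Your handling of the two delicate points --- faithfulness of the $G/C$-action on the quotient (via cyclicity of point stabilizers, equivalently faithfulness of the isotropy character) and the transport of rotation numbers by $s$ --- is sound.
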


\begin{proof} 
Notons \(C\) une telle surface de Riemann et soit $i : \text{Aut} (\FJ2)\rightarrow \text{Aut} (C)$ un morphisme injectif. Soit \( L =\Omega^1 (C) ^*\) et \( c: C \rightarrow \mathbb P (L ) \) l'application canonique qui, à un point \(p\in C\), associe la classe projective de l'évaluation d'une \(1\)-forme holomorphe en un vecteur tangent non nul en \(p\).  Cette application est équivariante vis-à-vis d'une représentation linéaire \(\rho : \text{Aut} (C) \rightarrow \text{GL} (L)\). 

\begin{lemme}\label{l: injectivite}  \(\rho\) est injective.\end{lemme}

\begin{proof}
Il est bien connu que \(c\) est un plongement si \(C\) n'est pas hyperelliptique et un revêtement ramifié double sur une conique de \(\mathbb P ( L )\) si c'est une courbe hyperelliptique (\cite{ACGH}). En particulier, étant donné que le groupe \(\text{Aut} (\FJ2)\) ne possède aucun élément d'ordre \(2\), la représentation linéaire \(\rho\) est injective. 
\end{proof}

\begin{lemme}\label{l: conjugaison}  \( \rho \) est conjuguée à la représentation \(\gamma\circ m\) où \(\gamma\) est l'automorphisme de \(\text{GL} (3,\C)\) induit par un automorphisme de Galois.\end{lemme}

\begin{proof} La relation \( sts^{-1} = t^2\) montre que l'application \(\rho (s^{-1})\) permute les espaces propres de \(\rho(t)\) en envoyant \( \text{Ker} (\rho (t) - \lambda)\) sur \( \text{Ker} (\rho (t) - \lambda ^2) \). Ainsi, le spectre de \(\rho (t)\) est formé de trois valeurs propres distinctes de la forme \( \lambda , \lambda^2 , \lambda ^4\) pour une certaine racine \(7\)-ième de l'unité \(\lambda\). Dans certaines coordonnées de \(L\), on a donc 
\[\rho (s) = (  y,z,x) \text{ et } \rho (t) = (\lambda x , \lambda^2 y, \lambda^4 z) .\]
Ainsi \(\rho\) est conjugué à la représentation induite par l'automorphisme de Galois qui envoie \(\zeta\) sur \(\lambda\).
\end{proof}

\begin{lemme} \label{l: quartique} Les équations quartiques projectivement invariantes par le groupe \(\rho \circ i (\text{Aut} (\FJ2) ) \) sont dans certaines coordonnées sur \(L\) données par  
\begin{equation}\label{eq: equation quartique de Klein}  xy^3 + y z^3 +  z x^3 = 0 .\end{equation}
 \end{lemme}
 
\begin{proof} 
À cause du lemme \ref{l: conjugaison}, il suffit de montrer que les équations quartiques projectivement invariantes par le groupe \(m(\text{Aut} (\FJ2))\) sont à multiplication par une constante près de la forme  
\begin{equation} \label{eq: equation P} x y^3 + \eta y z^3 + \eta^2 z x^3 =0 \end{equation}
où $\eta$ est une racine cubique primitive de l'unité. En effet, elles sont toutes équivalentes à \eqref{eq: equation quartique de Klein} par la transformation linéaire \((x, \eta y, \eta^2 z)\).

Soit \(P\in \C [x,y,z] \) un polynôme non nul homogène de degré~4 projectivement invariant par le groupe \(m( \text{Aut} (\FJ2)) \), c'est-à-dire que \( P\circ m(g) = u(g) P \) pour un certain morphisme \( u  : \text{Aut} (\FJ2) \rightarrow \C^*\). Comme \(t\) appartient au groupe dérivé de \(\text{Aut} (\FJ2)\), on a \( u(t) =1\). 
Décomposons le polynôme \(P\) en une combinaison linéaire de monômes \(P = \sum_{\alpha, \beta, \gamma} P_{\alpha, \beta, \gamma} x^\alpha y^\beta z^\gamma \), où \(\alpha, \beta, \gamma\) décrivent toutes les partitions de quatre.   Comme \(P \circ m(t) = P\), on déduit que \( P_{\alpha,\beta,\gamma} = 0 \) dès que \( \alpha + 2 \beta + 4\gamma \) n'est pas un multiple de \(7\). De plus, comme \( P\circ m(s) = u(s) P \) avec \(u(s) \neq 0\), on déduit que l'annulation de \( P_{\alpha,\beta, \gamma}\) est invariante par permutation cyclique de \(\alpha , \beta , \gamma\). Ces deux observations montrent que les seuls monômes apparaissant dans la décomposition de \(P\) sont associés aux partitions \( (\alpha, \beta, \gamma) \in \{ (1,3,0) , (0, 1, 3), (3,0,1)\} \), donc \(P\) est projectivement équivalent à \eqref{eq: equation P} avec \( c= u(s)\).  
\end{proof}  
 
Nous sommes maintenant en mesure de terminer la démonstration de la proposition~\ref{p: caracterisation de la quartique de Klein}. En effet, l'image de \(C\) par l'application canonique est une courbe (comptée avec multiplicité) de degré~4 dont l'équation quartique est projectivement invariante par \(\rho \circ i (\text{Aut} (\FJ2))\). Dans certaines coordonnées, cette équation est \eqref{eq: equation quartique de Klein} d'après le lemme \ref{l: quartique}, ce qui montre que \( c(C)\) est la quartique de Klein \eqref{eq: quartique de Klein} et que \(c\) induit un biholomorphisme entre \(C\) et cette dernière.  \end{proof}

\begin{corollaire} \label{c: quartique de Klein}
Le quotient de l'ensemble de Fatou \(\Fatou(\FJ2)\) par \(\FJ2\)  est biholomorphe à la quartique de Klein.
\end{corollaire}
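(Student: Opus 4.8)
Le plan consiste à identifier le quotient de l'ensemble de Fatou par le feuilletage avec la section transverse \(B\), puis à appliquer la proposition~\ref{p: caracterisation de la quartique de Klein}. Tout repose sur le fait, déjà acquis, que \(\FJ2\) satisfait les propriétés \(\Pr\) et \(\Qr\) (lemme~\ref{l: les singularites de FJ2 sont des sources} et proposition~\ref{transversality-Btwotwo}), ce qui autorise l'emploi de l'ensemble des résultats précédents.

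Je commencerais par l'identification du quotient. D'après la proposition~\ref{p: decomposition Fatou Julia}, l'ensemble de Fatou \(\Fatou(\FJ2)\) est égal à l'ensemble errant \(D\) du théorème~\ref{t: existence domaine errant}, c'est-à-dire au saturé de \(B\) par \(\FJ2\). Ce théorème affirme de plus que \(D\) est un fibré en disques au-dessus de \(B\), de fibres les feuilles \(\Att_\W(b)=\FJ2(b)\) pour \(b\in B\). Ces feuilles étant deux à deux disjointes et recouvrant \(D\), la surface \(B\) rencontre chaque feuille de \(\FJ2\) contenue dans \(D\) en exactement un point : c'est une section transverse globale. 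L'application de passage au quotient \(D\to D/\FJ2\) y induit donc une bijection de \(B\) sur \(\Fatou(\FJ2)/\FJ2\) qui, les deux membres portant la même structure holomorphe transverse, est un biholomorphisme. Il ne reste qu'à reconnaître \(B\).

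Je vérifierais ensuite les hypothèses de la proposition~\ref{p: caracterisation de la quartique de Klein}. En degré \(d=2\), la proposition~\ref{p: connexite} assure que \(B\) est une surface de Riemann compacte connexe de genre \(\frac{d(d+1)}{2}=3\). D'autre part, le lemme~\ref{l: preservation de W et B} montre que le groupe \(\text{Aut}(\FJ2)\) préserve \(B\) ; agissant par transformations projectives qui préservent le feuilletage, il respecte la structure holomorphe transverse et induit donc un morphisme \(i:\text{Aut}(\FJ2)\to \text{Aut}(B)\).

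Le point principal serait d'établir l'injectivité de \(i\). Le groupe \(\text{Aut}(\FJ2)\), engendré par \(s\) et \(t\) avec \(s^3=t^7=1\) et \(sts^{-1}=t^2\), est l'unique groupe non abélien d'ordre \(21\), dont les seuls sous-groupes normaux sont \(\{1\}\), \(\langle t\rangle\) et le groupe entier ; le noyau de \(i\) étant normal, il suffit de montrer que \(t\) agit non trivialement sur \(B\). Or un calcul direct montre que les seuls points de \(B\) situés sur les droites de coordonnées sont \([1:0:0]\), \([0:1:0]\) et \([0:0:1]\) ; comme \(B\) est une surface de genre \(3\), elle possède d'autres points, à coordonnées toutes non nulles, et de tels points sont déplacés par \(t=[\zeta x:\zeta^2 y:\zeta^4 z]\) puisque \(\zeta\), \(\zeta^2\) et \(\zeta^4\) sont distincts. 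Ainsi \(i\) est injectif, et la proposition~\ref{p: caracterisation de la quartique de Klein} permet de conclure que \(B\), donc \(\Fatou(\FJ2)/\FJ2\), est biholomorphe à la quartique de Klein. La seule difficulté, de nature mineure, est cette vérification de fidélité de l'action ; le reste de l'argument n'est qu'un assemblage de résultats déjà établis.
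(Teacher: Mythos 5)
Your proof is correct, and its skeleton is exactly that of the paper: identify the quotient \(\Fatou(\FJ2)/\FJ2\) with the section transverse \(B\) munie de sa structure complexe induite, invoke la proposition~\ref{p: connexite} pour le genre \(3\), le lemme~\ref{l: preservation de W et B} pour l'invariance de \(B\), puis la proposition~\ref{p: caracterisation de la quartique de Klein}. La seule étape où vous divergez réellement du texte est la fidélité de l'action de \(\text{Aut}(\FJ2)\) sur \(B\). Le papier la règle en une ligne : \(B\) étant une surface analytique réelle lisse, non vide et non holomorphe, elle est Zariski dense sur \(\C\) dans \(\Ptwo\), donc toute transformation projective agissant trivialement sur \(B\) est l'identité. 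Vous la règlez par la théorie des groupes : les seuls sous-groupes normaux du groupe non abélien d'ordre \(21\) sont \(\{1\}\), \(\langle t\rangle\) et le groupe entier, et \(t\), dont les seuls points fixes dans \(\Ptwo\) sont les trois points de coordonnées (valeurs propres \(\zeta,\zeta^2,\zeta^4\) deux à deux distinctes), déplace nécessairement un point de la surface infinie \(B\). Les deux arguments sont valables ; celui du papier est plus court et ne dépend ni de la structure du groupe ni d'aucun calcul sur \(B\) (il s'appliquerait à n'importe quel groupe de transformations projectives préservant \(B\)), tandis que le vôtre est plus élémentaire, évitant le fait que toute surface analytique réelle non holomorphe est Zariski dense, au prix de la classification des sous-groupes normaux et d'un calcul de points fixes. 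Notez d'ailleurs que votre détour par \(B\cap\{\text{droites de coordonnées}\}\) est superflu : comme les points fixes de \(t\) dans \(\Ptwo\) sont exactement les trois points de coordonnées, il suffit que \(B\) soit infinie pour qu'elle contienne un point déplacé par \(t\).
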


\begin{proof} [Démonstration]
Ce quotient est biholomorphe à la courbe mixte \(B\) d'équation \eqref{eq: B Jouanolou2} munie de la structure complexe induite par \(\FJ2\). Comme \(B\)  est une surface analytique réelle lisse non vide et non holomorphe, elle est Zariski dense sur \(\C\) dans \(\Ptwo\) et par suite le groupe \(\text{Aut} (\FJ2) \) agit fidèlement sur \(B\). Son action sur \(B\) étant holomorphe, le corollaire est une conséquence de la proposition \ref{p: caracterisation de la quartique de Klein}. 
\end{proof}

%%%%%%%%%%%%%%%%%%%%%%%%%%%%%%%%%%%%%
%%%%%%%%%%%%%%%%%%%%%%%%%%%%%%%%%%%%%
\section{Expérimentations numériques et images} %%
\label{sec:images-exp-num}
%%%%%%%%%%%%%%%%%%%%%%%%%%%%%%%%%%%%%
%%%%%%%%%%%%%%%%%%%%%%%%%%%%%%%%%%%%%

%%%%%%%%%%%%%%%%%%%%%%%%%%%%%%%%%%%%%
\subsection{Stabilité structurelle en degré \(d>2\)} %%
%%%%%%%%%%%%%%%%%%%%%%%%%%%%%%%%%%%%%

Par un calcul analogue à celui détaillé dans le lemme~\ref{l: les singularites de FJ2 sont des sources}, on démontre que le feuilletage de Jouanolou de degré \(d > 2\) vérifie la propriété~\textbf{\(\Qr\)}.
Quant à la propriété~\textbf{\(\Pr\)}, les choses sont plus délicates puisque celle-ci dépend du choix d'une fonction mesurant la distance infinitésimale entre les feuilles.
La fonction \(-\log ||\cdot||\) ne convient pas pour \(d \geq 3\) mais nous avons pu vérifier numériquement pour les degrés \(3 \leq d \leq 5\) que d'autres fonctions conviennent, en l'occurrence les fonctions \(-\log ||\cdot||_p\) pour des~\(p\) bien choisis dépendant de~\(d\).
Plus précisément, les dessins de la figure~\ref{img:transversality} montrent que, pour \(2 \leq d \leq 5\), il existe un intervalle de normes \(\ell^p\) pour lesquelles l'inégalité de transversalité est satisfaite.
Sur chaque dessin de la figure~\ref{img:transversality}, on lit en abscisse la valeur de \(p\) et en ordonnée le maximum sur la surface \(B_p\) (analogue de l'équation~\ref{eq: section transverse} pour la fonction \(-\log ||\cdot||_p\)) du rapport \(R_p={\left|\frac{\partial^2||\cdot||^p_p}{\partial t^2}\right| }/{\frac{\partial^2||\cdot||^p_p}{\partial t \partial \bar{t}}}\).
D'après la démonstration du lemme~\ref{l: singularites longitudinales de \W}, la propriété~\textbf{\(\Pr\)} est donc satisfaite pour la fonction \(-\log ||\cdot||_p\) si et seulement si ce rapport~\(R_p\) est strictement inférieur à~1.

La proposition~\ref{eq: B Jouanolou2} donne une démonstration formelle de la propriété~\textbf{\(\Pr\)} pour le couple degré-norme \((d=2, p=2)\) et on constate numériquement que les couples \((d=3,p=3)\), \((d=4,p=4)\) et \((d=5,p=5)\) fonctionnent.
Ce sont ces expérimentations numériques qui nous permettent de conjecturer la stabilité structurelle du feuilletage de Jouanolou en degré \(d > 2\).

\begin{figure}[ht]
	\centering
	\begin{subfigure}{0.45\textwidth}
		\centering
		\includegraphics[width=\textwidth]{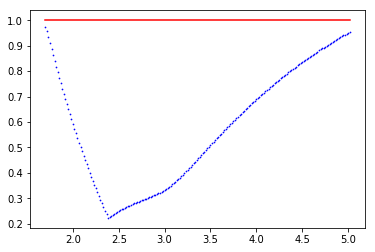}
		\caption{\(d=2\)}
		\label{img:transversality-d2}
	\end{subfigure}
	\begin{subfigure}{0.45\textwidth}
		\centering
		\includegraphics[width=\textwidth]{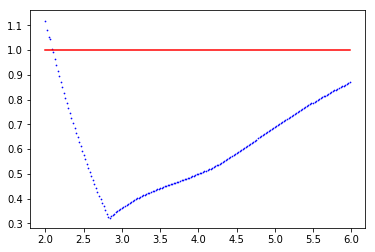}
		\caption{\(d=3\)}
		\label{img:transversality-d3}
	\end{subfigure}
	\begin{subfigure}{0.45\textwidth}
		\centering
		\includegraphics[width=\textwidth]{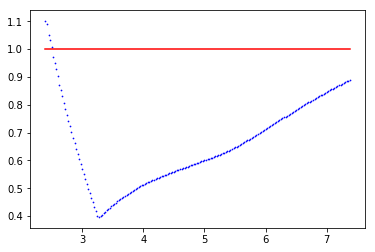}
		\caption{\(d=4\)}
		\label{img:transversality-d4}
	\end{subfigure}
	\begin{subfigure}{0.45\textwidth}
		\centering
		\includegraphics[width=\textwidth]{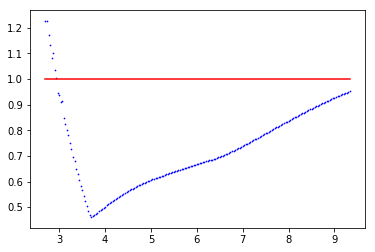}
		\caption{\(d=5\)}
		\label{img:transversality-d5}
	\end{subfigure}
	\caption{Maximum de~\(R_p\) sur la surface~\(B_p\) en fonction de~\(p\)}
	\label{img:transversality}
\end{figure}

%%%%%%%%%%%%%%%%%%%%%%%%%%%%%%%%%%%%%
\subsection{Ensemble de Julia transversalement Cantor} %%
%%%%%%%%%%%%%%%%%%%%%%%%%%%%%%%%%%%%%

Les dessins de la figure~\ref{img:deg} (page~\pageref{img:deg}) montrent l'intersection de l'ensemble de Julia du feuilletage de Jouanolou de degré \(2 \leq d \leq 5\) avec une sphère bordant le voisinage de l'une des singularités et invariante par la symétrie~\(s\) d'ordre~3 (éq.~\ref{eq: symetries s et t}).
Nous avons produit des dessins analogues jusqu'en degré~\(d=9\) (ainsi que des visualisations en 3D pour en apprécier plus finement les détails).
Ce sont ces images et d'autres travaux en cours qui nous permettent de conjecturer que l'ensemble de Julia est transversalement un ensemble de Cantor en tout degré \(d \geq 2\).

\begin{figure}[ht]
	\centering
	\begin{subfigure}{0.45\textwidth}
		\centering
		\includegraphics[width=\textwidth]{d2.pdf}
		\caption{\(d=2\)}
		\label{img:deg2}
	\end{subfigure}
	\begin{subfigure}{0.45\textwidth}
		\centering
		\includegraphics[width=\textwidth]{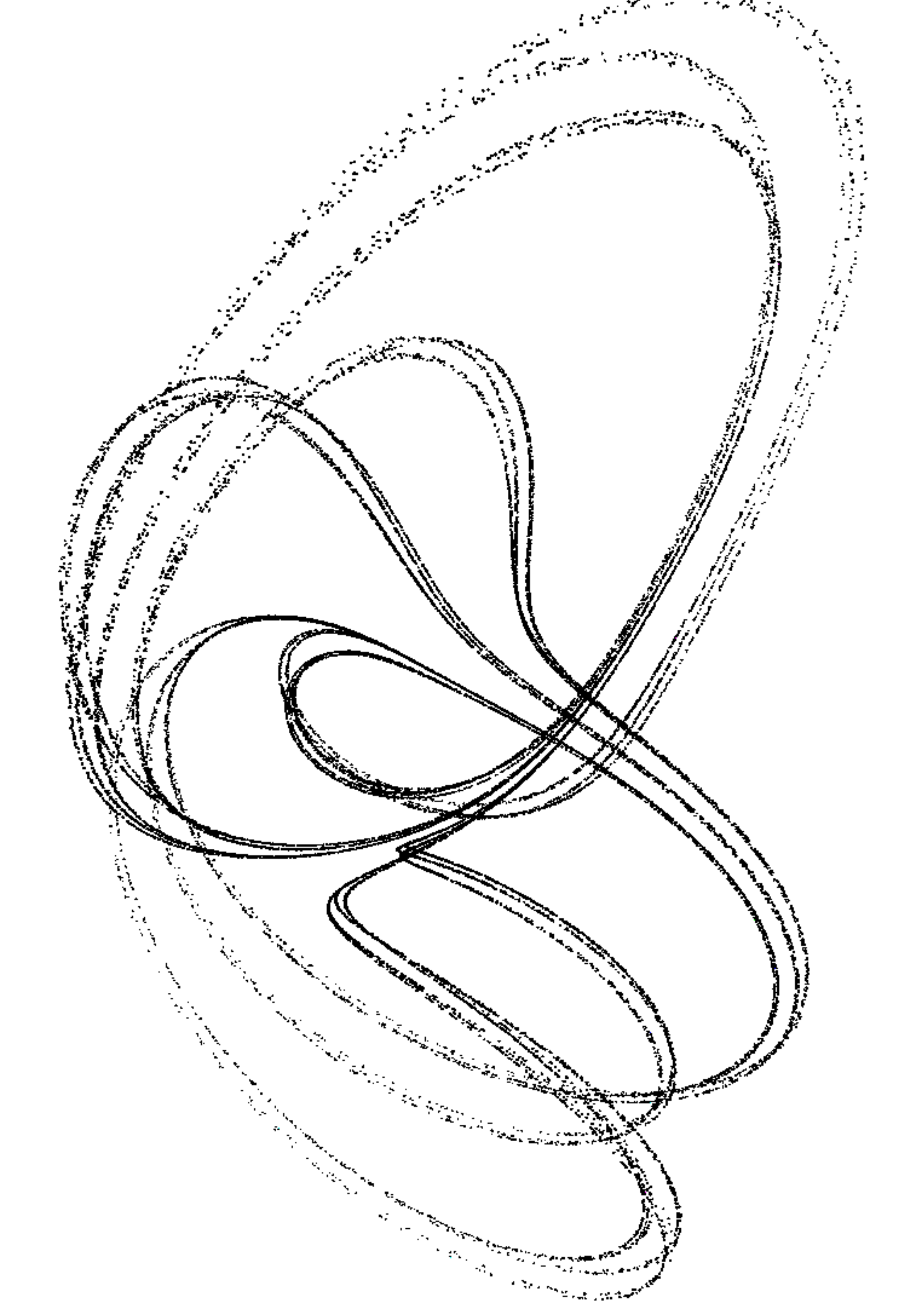}
		\caption{\(d=3\)}
		\label{img:deg3}
	\end{subfigure}
	\begin{subfigure}{0.45\textwidth}
		\centering
		\includegraphics[width=\textwidth]{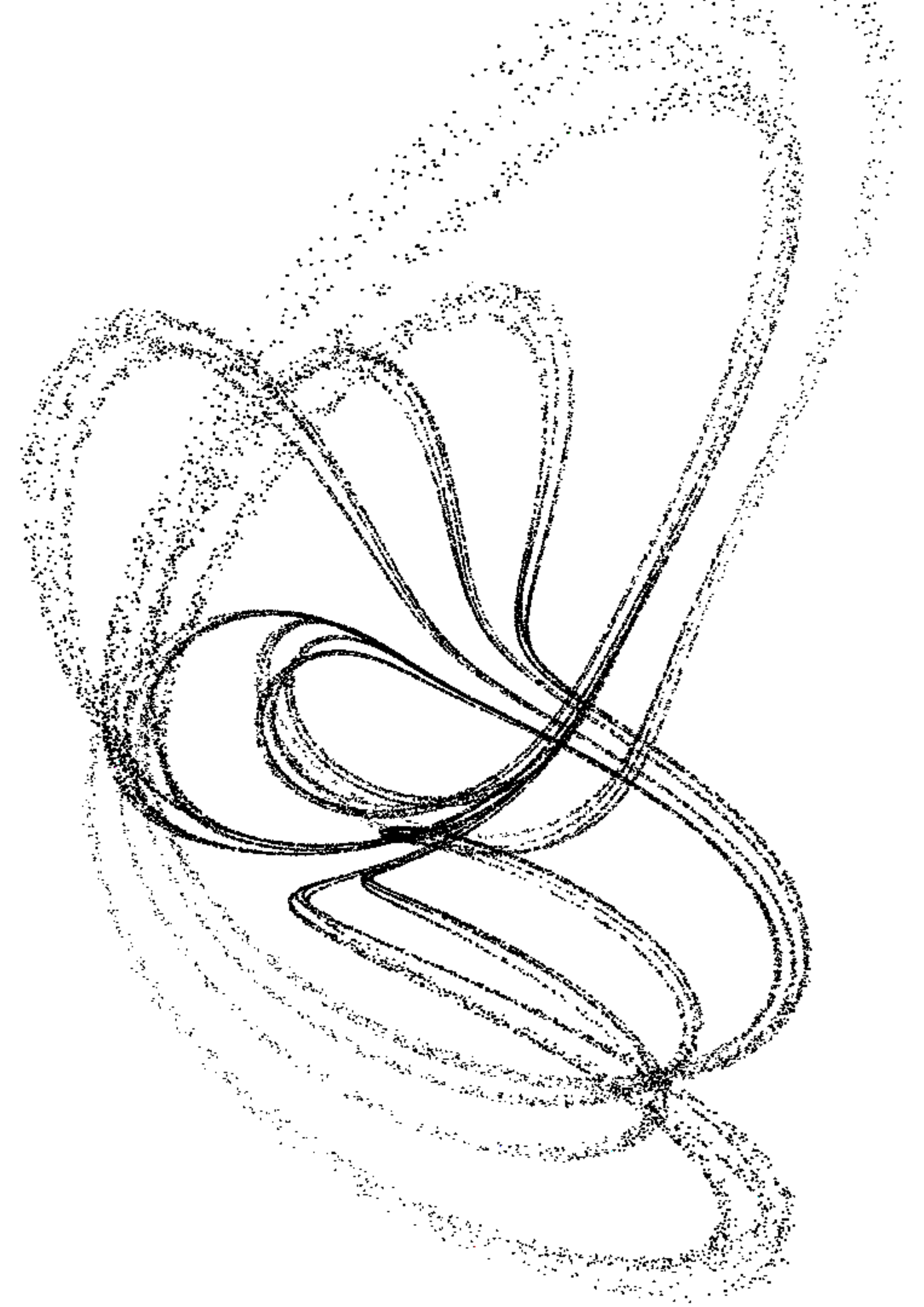}
		\caption{\(d=4\)}
		\label{img:deg4}
	\end{subfigure}
	\begin{subfigure}{0.45\textwidth}
		\centering
		\includegraphics[width=\textwidth]{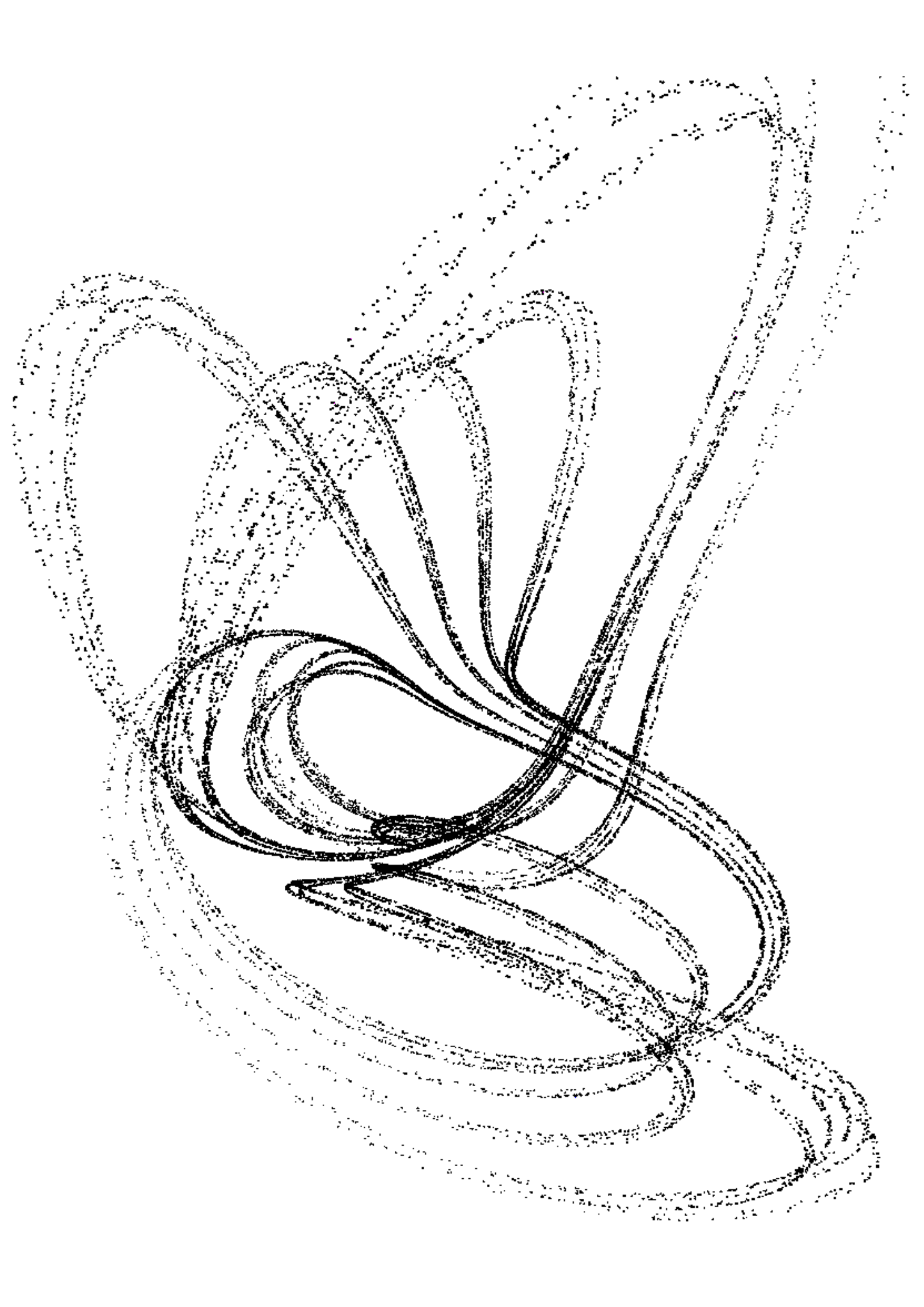}
		\caption{\(d=5\)}
		\label{img:deg5}
	\end{subfigure}
		\caption{Intersection de l'ensemble de Julia du feuilletage de Jouanolou de degré~\(d\) avec une sphère entourant une singularité}
		\label{img:deg}
\end{figure}

%%%%%%%%%%%%%%%%%%%
%%%%%%%%%%%%%%%%%%%
%%%%%%%%%%%%%%%%%%%
%%%%%%%%%%%%%%%%%%%


\begin{thebibliography}{GGMS}

%\bibitem[An60]{Anosov}
%D.~Anosov.
%\newblock{\em ???.}
%\newline
%\newblock {\em ???}. Vol. ??, p. ??? (1960).

\bibitem{ACGH}
E. Arbarello, M. Cornalba, P.A. Griffiths, J.E. Harris.
\newblock{\em Geometry of algebraic curves}. 
\newline
\newblock{Springer} (1984).

 \bibitem{asuke} 
 T. Asuke.
 \newblock{\em A Fatou-Julia decomposition of transversally holomorphic foliations.}
 \newline
 \newblock {Ann. Inst. Fourier} (2010), 60-3, p.~1057-1104.

\bibitem{Bogomolov}
F. Bogomolov.
\newblock{\em Complex manifolds and algebraic foliations.}
\newline
\newblock {RIMS-1084} (1996).

\bibitem{BLM} 
C. Bonatti, R. Langevin, R. Moussu.
\newblock {\em Feuilletages de \( \PnC \) : de l'holonomie hyperbolique pour les minimaux exceptionnels.}
\newline 
\newblock {Publ. Math. IHES} (1992), 75-1, p.~123-134.

\bibitem{Brunella}
M. Brunella.
\newblock{\em Birational geometry of foliations.}
\newline
\newblock{Springer} IMPA monograph (2015).

\bibitem{CdF}
C. Camacho, L.H. de Figueiredo. 
\newblock{\em  The dynamics of the Jouanolou foliation on the complex projective $2$-space.}
\newline
\newblock {Ergod. Th. Dynam. Syst.} (2001), 21, p.~757-766.

\bibitem{Cerveau}
D. Cerveau.
\newblock {\em Densité des feuilles de certaines équations de Pfaff à 2 variables. }
\newline 
\newblock {Ann. Inst. Fourier} (1983), 33-1, p.~185-194. 

\bibitem{Deroin Guillot}
B. Deroin, A. Guillot.
\newblock {\em Foliated affine and projective structures.}
\newline 
\newblock {Compositio Math.} À paraître.

\bibitem{Deroin Kleptsyn} 
B. Deroin, V. Kleptsyn. 
\newblock{\em Random Conformal Dynamical Systems.}
\newline
\newblock{Geometric and Functional Analysis} (2007), 17-4, p.~1043-1105.

\bibitem{FH}
T. Fisher, B. Hasselblatt.
\newblock{\em Hyperbolic flows.}
\newline 
\newblock{EMS Zurich Lectures in Advanced Mathematics} (2020) 25.

\bibitem{Ghys}
É. Ghys.
\newblock{\em Sur les groupes engendrés par des difféomorphismes proches de l'identité.}
\newline
\newblock{Bol. Soc. Brasil. Mat.} (1993), 24-2, p.~137-178. 

\bibitem{GGMS}
É.~Ghys, X.~Gomez-Mont, J.~Saludes.
\newblock{\em Fatou and Julia components of transversely holomorphic foliations.}
\newline
\newblock {Essays on geometry and related topics}. Monogr. Enseign. Math. 38 (2001), 1-2, p.~287-319.

\bibitem{HV} 
M.O. Hudaj-Verenov. 
\newblock {\em A property of the solutions of a differential equation.}
\newline
\newblock{ Math. USSR Sb.} (1962), 56-3, p.~301-308 (en russe).

\bibitem{Ilyashenko}
  Y. Ilyashenko. 
  \newblock{\em Global and local aspects of the theory of complex differential equations. }
  \newline 
  \newblock{Proc. Int. Cong. Math. Helsinski} (1978), p.~821-828.

\bibitem{Jouanolou}
J.-P. Jouanolou.
\newblock{\em Équations de Pfaff algébriques.} 
\newline
\newblock{Lecture Notes in Mathematics} (1979), Springer, 708.

\bibitem{Haefliger}
A. Haefliger.
 \newblock {\em Foliations and compactly generated pseudogroups} 
 \newline 
 \newblock {Foliations: geometry and dynamics (2000)}, World Sci. Publ. (2002), p.~275-295.

\bibitem{Lins Neto} 
A. Lins Neto. 
\newblock{\em Algebraic solutions of polynomial differential equations and foliations in dimension two.}
\newline 
\newblock{Holomorphic Dynamics (1986) (Lecture Notes in Mathematics, 1345).} Springer (1988), p.~192–232.

\bibitem{Loray-Rebelo}
F.~Loray, J.~Rebelo.
\newblock{\em Minimal, rigid foliations by curves on \(\PnC\).}
\newline
\newblock {J. Eur. Math. Soc.} (2003), 5, p.~147-201.

\bibitem{Mjuller}
B. Mjuller.
\newblock{\em On the density of solutions of an equation in $\Ptwo$. }
\newline 
\newblock {Mat. Sb.} (1975), 98 (140), p.~325-338.

\bibitem{Nakai}
I. Nakai. 
\newblock{\em Separatrices for nonsolvable dynamics on C,0.}
\newline
\newblock{Ann. Inst. Fourier} (1994), 44-2, p.~569-599. 

\bibitem{Oka} 
M. Oka.
\newblock{\em On mixed projective curves.}
\newline
\newblock {Singularities in geometry and topology (2009). Proceedings of the 5th Franco-Japanese symposium on singularities} (EMS) IRMA Lect. in Math. and Theoretical Physics, (2012), 20, p.~133-147. 

\bibitem{Robinson}
C.~Robinson.
\newblock{\em Structural Stability Manifolds with boundary.}
\newline
\newblock{Journal of differential equations} (1980), 37, p.~1-11.

\bibitem{Scherbakov}
A.A. Scherbakov.
\newblock{\em On the density of an orbit of a pseudogroup of conformal mappings and a generalization of the Hudai-Verenov theorem}
\newline
\newblock{Vestnik Movskovskogo Universiteta. Mathematika} (1982), 31-4, p.~10-15.

\bibitem{Takeuchi}
A. Takeuchi.
\newblock{\em Domaines Pseudo-Convexes dans les variétés Kählériennes.} 
\newline 
\newblock {Jour. Math. Kyoto University} (1967), 6-3 p.~323-357.

\end{thebibliography}
\end{document}